\newtheorem{theorem}{Theorem}[section]
\newtheorem{lemma}[theorem]{Lemma}
\newtheorem{Remark}[theorem]{Remark}
\numberwithin{equation}{section}
\DeclareMathOperator{\diverge}{div} 
 \providecommand{\norm}[1]{\left\Vert#1\right\Vert}
\def\r3{\mathbb{R}^3}
\begin{document}
\title[Decay of dissipative equations]{Decay of dissipative equations and
negative Sobolev spaces}
\author{Yan Guo}
\address{Division of Applied Mathemathics\\
Brown University\\
Providence, RI 02912, USA}
\email[Y. Guo]{guoy@cfm.brown.edu}
\author{Yanjin Wang}
\address{School of Mathematical Sciences\\
Xiamen University\\
Xiamen, Fujian 361005, China}
\email[Y. J. Wang]{yanjin$\_$wang@xmu.edu.cn}
\thanks{Corresponding author: Y. J. Wang, yanjin$\_$wang@xmu.edu.cn. Y. Guo's research is supported in part by NSF \#0905255 as well as
a Chinese NSF grant \#10828103. Y. J. Wang's research is supported in part by
National Natural Science Foundation of China-NSAF \#10976026.}

\keywords{Navier-Stokes equations; Boltzmann equation; Energy method;
Optimal decay rates; Sobolev interpolation; Negative Sobolev space}
\subjclass[2000]{35Q30; 76N15; 76P05; 82C40}

\begin{abstract}
We develop a general energy method for proving the optimal time decay rates
of the solutions to the dissipative equations in the whole space. Our method
is applied to classical examples such as the heat equation, the compressible
Navier-Stokes equations and the Boltzmann equation. In particular, the
optimal decay rates of the higher-order spatial derivatives of solutions are
obtained. The negative Sobolev norms are shown to be preserved along time
evolution and enhance the decay rates. We use a family of scaled energy
estimates with minimum derivative counts and interpolations among them
without linear decay analysis.
\end{abstract}

\maketitle



\section{Introduction}


The purpose of this paper is to develop a new method to establish the optimal
time decay rates of the solutions to the Cauchy problem for the compressible
Navier-Stokes equations and the Boltzmann equation through the pure energy
method. Both the two equations can be formulated as the perturbed operator
form:
\begin{equation}
\left\{
\begin{array}{lll}
\partial _{t}U+\mathfrak{L}U=\mathfrak{N}(U) &  &  \\
U|_{t=0}=U_{0}, &  &
\end{array}%
\right.   \label{dissipative equation}
\end{equation}%
where $U$ is the small perturbation of the equilibrium state, $\mathfrak{L}$
is the linear operator and $\mathfrak{N}(U)$ is the nonlinear term. The
linear operator $\mathfrak{L}$ is positively definite in some sense, which
implies that the solution $e^{t\mathfrak{L}}U_{0}$ of the linearized
equation of \eqref{dissipative equation} converges to $0$ as $t\rightarrow
\infty $. By the classical spectral method, the optimal time decay rates of
the linearized equations of the compressible Navier-Stokes equations and the
Boltzmann equation are well known. The decay rates are similar to that of
the heat equation: for $1\leq p\leq 2$,
\begin{equation}
\norm{\nabla_x^\ell e^{t \mathfrak{L}}U_0}_{L^{2}}\leq C(1+t)^{-\frac{3}{2}(%
\frac{1}{p}-\frac{1}{2})-\frac{\ell }{2}}\left( \norm{U_0}_{L^{p}}+%
\norm{\nabla_x^\ell U_0}_{L^{2}}\right) ,\ \ell \geq 0.  \label{decay rate}
\end{equation}%
One may then expect that the small solution of the nonlinear equation %
\eqref{dissipative equation} has the same decay rate as the linear one %
\eqref{decay rate}. Many works were denoted to proving the time decay rate
for the nonlinear system \eqref{dissipative equation}. For instance, see
\cite{D1,D2,DUYZ1,HZ1,HZ2,KK1,KK2,K1,KS1,LW,M,MN2,P} for the compressible
Navier-Stokes equations and \cite{DS1,DS2,DUYZ2,NI1976,UY2006,UYZ,YY,YZ} for
the Boltzmann equation, and the references therein. There are two main kinds
of method for proving these decay rates among those references. One is that
under the additional assumption that $U_{0}\in L^{p}$ with $1\leq p<2$ (near
$1$), then the optimal decay rate of \eqref{dissipative equation} is proved
by combining the linear optimal decay rate \eqref{decay rate} of spectral
analysis and the energy method, cf. \cite%
{DS1,DS2,DUYZ1,DUYZ2,HZ1,HZ2,KK1,KK2,K1,KS1,LW,MN2,NI1976,P,UY2006,YY}. The
other one is to proving the decay rate through the pure energy method, cf.
\cite{D1,D2,M,UYZ,YZ}.

It is difficult to show that the $L^{p}$ norm of the solution can be
preserved along time evolution in the $L^{p}$--$L^{2}$ approach. On the other
hand, except \cite{M}, the existing pure energy method of proving the decay
rate does not lead to the optimal decay rate for the solution.  Motivated by
\cite{GT},  using a negative Sobolev space $\dot{H}^{-s}$ to replace $L^{p}$
norm, we combine scaled energy estimates with the interpolation between
negative and positive Sobolev norms to prove the time decay rate for these
dissipative equations. To illustrate the main idea of our approach,
\smallskip \smallskip we first revisit the heat equation
\begin{equation}
\left\{
\begin{array}{lll}
\partial _{t}u-\Delta u=0\ \text{ in }\mathbb{R}^{3} &  &  \\
u|_{t=0}=u_{0}, &  &
\end{array}%
\right.   \label{heat equation}
\end{equation}

\noindent \textbf{Notation 1.} In this paper, $\nabla ^{\ell }$ with an
integer $\ell \geq 0$ stands for the usual any spatial derivatives of order $%
\ell $. When $\ell <0$ or $\ell $ is not a positive integer, $\nabla ^{\ell }
$ stands for $\Lambda ^{\ell }$ defined by \eqref{1Lambdas}. We use $\dot{H}%
^{s}(\mathbb{R}^{3}),s\in \mathbb{R}$ to denote the homogeneous Sobolev
spaces on $\mathbb{R}^{3}$ with norm $\norm{\cdot}_{\dot{H}^{s}}$ defined by %
\eqref{1snorm}, and we use $H^{s}(\mathbb{R}^{3})$ to denote the usual
Sobolev spaces with norm $\norm{\cdot}_{H^{s}}$ and $L^{p},1\leq p\leq
\infty $ to denote the usual $L^{p}(\mathbb{R}^{3})$ spaces with norm $%
\norm{\cdot}_{L^{p}}$. We will employ the notation $a\lesssim b$ to mean
that $a\leq Cb$ for a universal constant $C>0$ that only depends on the parameters coming from the problem, and the indexes $N$ and $s$ coming from the regularity on the data. We also use $C_0$ for a positive constant depending additionally on the initial data.

\begin{theorem}
\label{heat} If $u_0\in H^N(\mathbb{R}^3)\cap \dot{H}^{-s}(\mathbb{R}^3)$
with $N\ge 0$ be an integer and $s\ge0$ be a real number, then for any real
number $\ell\in[-s, N]$, we have
\begin{equation}  \label{heat decay}
\norm{\nabla^\ell u(t)}_{L^2}\le C_0(1+t)^{-\frac{\ell+s}{2}}.
\end{equation}
\end{theorem}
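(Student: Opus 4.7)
The plan is to combine three ingredients: preservation of the negative Sobolev norm, the standard energy identity in positive Sobolev spaces, and Sobolev interpolation between them, then to solve a differential inequality for the energy.

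First I would show that $\|u(t)\|_{\dot H^{-s}}$ stays bounded by $\|u_0\|_{\dot H^{-s}}$. For the heat equation this is immediate on the Fourier side: $\widehat{u}(t,\xi)=e^{-t|\xi|^{2}}\widehat{u_0}(\xi)$, so $|\xi|^{-s}|\widehat u(t,\xi)| \le |\xi|^{-s}|\widehat{u_0}(\xi)|$. Hence $\|u(t)\|_{\dot H^{-s}}\le \|u_0\|_{\dot H^{-s}}$ for every $t\ge 0$. Next, for any integer $k\in[0,N]$, multiplying the equation by $\nabla^{2k}u$ and integrating by parts yields the basic energy identity
\begin{equation*}
\frac{1}{2}\frac{d}{dt}\|\nabla^{k}u\|_{L^2}^{2}+\|\nabla^{k+1}u\|_{L^2}^{2}=0.
\end{equation*}

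The core step is to convert this identity into a closed differential inequality for $\|\nabla^{k}u\|_{L^2}$ alone, using the $\dot H^{-s}$ bound as a conserved reservoir. By the Sobolev interpolation inequality
\begin{equation*}
\|\nabla^{k}u\|_{L^2}\le \|\nabla^{k+1}u\|_{L^2}^{1-\theta}\,\|u\|_{\dot H^{-s}}^{\theta},\qquad \theta=\frac{1}{k+1+s},
\end{equation*}
one gets
\begin{equation*}
\|\nabla^{k+1}u\|_{L^2}^{2}\;\gtrsim\;\|u_0\|_{\dot H^{-s}}^{-2/(k+s)}\,\|\nabla^{k}u\|_{L^2}^{\,2(k+1+s)/(k+s)}.
\end{equation*}
Plugging this into the energy identity gives an ODE of the form $y'+C_0\,y^{1+1/(k+s)}\le 0$ with $y(t)=\|\nabla^{k}u(t)\|_{L^2}^{2}$, whose solution satisfies $y(t)\le C_0(1+t)^{-(k+s)}$. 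This proves the claim for every integer $k\in[0,N]$.

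Finally I would extend to the full range $\ell\in[-s,N]$, including non-integer and negative values, by one more round of Sobolev interpolation. For $\ell\in[-s,0]$, interpolate directly between $\|u(t)\|_{\dot H^{-s}}\lesssim C_0$ and $\|u(t)\|_{L^2}\lesssim C_0(1+t)^{-s/2}$. For $\ell\in[k,k+1]$ with $0\le k\le N-1$, interpolate between the decay bounds at orders $k$ and $k+1$; the exponents in both cases combine exactly to $(\ell+s)/2$. The only delicate point, and what I regard as the main obstacle, is the interpolation inequality involving $\dot H^{-s}$ for real $s\ge 0$: it must be justified via the Littlewood--Paley or Fourier definition of $\Lambda^{-s}$ introduced in the paper's Notation~1, so that one may freely handle non-integer $s$ and combine it with integer-order energy estimates. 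Once that tool is in place, the rest is the ODE argument above.
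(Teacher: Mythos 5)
Your proof is correct and follows essentially the same strategy as the paper: conserve the $\dot H^{-s}$ norm, combine the energy identity with the interpolation inequality (Lemma~\ref{1-sinte}) to lower-bound the dissipation $\|\nabla^{k+1}u\|_{L^2}^2$ by a power of the energy $\|\nabla^{k}u\|_{L^2}^2$, and solve the resulting ODE. The only cosmetic differences are that the paper applies the energy identity \eqref{heat 1} directly for every real $\ell\in[-s,N]$ (reading $\nabla^{\ell}=\Lambda^{\ell}$), which makes your final interpolation step between consecutive integer orders unnecessary, and that it derives the $\dot H^{-s}$ bound from the $\ell=-s$ case of the energy identity rather than from the explicit Fourier solution, consistent with the pure energy-method philosophy it later carries over to the nonlinear problems.
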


\begin{proof}
Let $-s\leq \ell \leq N$. First, we have the standard energy identity of %
\eqref{heat equation}:
\begin{equation}
\frac{1}{2}\frac{d}{dt}\norm{\nabla^\ell u}_{L^{2}}^{2}+\norm{\nabla^{%
\ell+1} u}_{L^{2}}^{2}=0.  \label{heat 1}
\end{equation}%
Integrating the above in time, we obtain
\begin{equation}
\norm{\nabla^\ell u(t)}_{L^{2}}^{2}\leq \norm{\nabla^\ell u_0}_{L^{2}}^{2}.
\label{heat 2}
\end{equation}%
This gives in particular \eqref{heat decay} with $\ell =-s$. Now for $%
-s<\ell \leq N$, \ by Lemma \ref{1-sinte}, we interpolate to get
\begin{equation}
\norm{\nabla^\ell u(t)}_{L^{2}}\leq \norm{\nabla^{-s} u(t)}_{L^{2}}^{\frac{1%
}{\ell +1+s}}\norm{\nabla^{\ell+1} u(t)}_{L^{2}}^{\frac{\ell +s}{\ell +1+s}}.
\label{heat 3}
\end{equation}%
Combining \eqref{heat 3} and \eqref{heat 2} (with $\ell =-s$), we obtain
\begin{equation}
\norm{\nabla^{\ell+1} u(t)}_{L^{2}}\geq \norm{\nabla^{-s} u_0}_{L^{2}}^{-%
\frac{1}{\ell +s}}\norm{\nabla^\ell u(t)}_{L^{2}}^{1+\frac{1}{\ell +s}}.
\label{heat 4}
\end{equation}%
Plugging \eqref{heat 4} into \eqref{heat 1}, we deduce that there exists a
constant $C_{0}>0$ such that
\begin{equation}
\frac{d}{dt}\norm{\nabla^\ell u}_{L^{2}}^{2}+C_{0}\left( \norm{\nabla^\ell u}%
_{L^{2}}^{2}\right) ^{1+\frac{1}{\ell +s}}\leq 0.  \label{heat 5}
\end{equation}%
Solving this inequality directly, we obtain
\begin{equation}
\norm{\nabla^\ell u(t)}_{L^{2}}^{2}\leq \left( \norm{\nabla^\ell u_0}%
_{L^{2}}^{-\frac{2}{\ell +s}}+\frac{C_{0}t}{\ell +s}\right) ^{-(\ell
+s)}\leq C_{0}(1+t)^{-(\ell +s)}.  \label{heat 5''}
\end{equation}%
Thus we deduce \eqref{heat decay} by taking the square root of \eqref{heat
5''}.
\end{proof}

\begin{Remark}
The general optimal $L^q$ decay rates of the solution follow by \eqref{heat
decay} and the Sobolev interpolation (cf. Lemma \ref{1interpolation}). For
instance,
\begin{equation}  \label{Linfty heat decay}
\norm{u(t)}_{L^\infty}\le C\norm{u(t)}_{L^2}^{\frac{1}{4}}\norm{\nabla^2
u(t)}_{L^2}^{\frac{3}{4}}\le C_0(1+t)^{-\left(\frac{3}{4}+\frac{s}{2}%
\right)}.
\end{equation}
An important feature in Theorem \ref{heat} is that the $\dot{H}^{-s}$ norm
of the solution is preserved along time evolution. Compared to \eqref{decay
rate}, the result \eqref{heat decay} demonstrates that the $\dot{H}%
^{-s}(s\ge 0)$ norm of initial data enhances the decay rate of the solution
with the factor $s/2$.
\end{Remark}

\begin{Remark}
Although Theorem \ref{heat} can be proved by the Fourier analysis or
spectral method, the same strategy in our proof can be applied to nonlinear
system with two essential points in the proof: (1) closing the energy
estimates at each $\ell $-th level (referring to the order of the spatial
derivatives of the solution); (2) deriving a novel negative Sobolev
estimates for nonlinear equations which requires $s<3/2$ ($n/2$ for dimension $n$).
\end{Remark}

The rest of this paper is organized as follows. In Section \ref{s2}, we will
state our main results of this paper. We will prove the main theorem of the
compressible Navier-Stokes equations in Section 3 and prove the main
theorems of the Boltzmann equation in Section 4, respectively. The analytic
tools used in this paper will be collected in Appendix. We point out here that our method can be applied to many dissipative equations in the whole space. For example, the natural extension of this paper is to considering the compressible Navier-Stokes equations and the Boltzmann equation under the influence of the self-consistent electric field or electromagnetic field, and these will be reported in the forthcoming papers.


\section{Main results}

\label{s2} 


\subsection{Main results for compressible Navier-Stokes equations}


Considering the compressible Navier-Stokes equations
\begin{equation}  \label{1NS}
\left\{%
\begin{array}{lll}
\partial_t\rho+\mathrm{div}(\rho u )=0 &  &  \\
\partial_t(\rho u) +\mathrm{div}(\rho u \otimes u )+\nabla p(\rho)-\mu\Delta
u -(\mu+\lambda)\nabla\mathrm{div} u =0 &  &  \\
(\rho,u)|_{t=0}=(\rho_0, u _0), &  &
\end{array}%
\right.
\end{equation}
which governs the motion of a compressible viscous fluid. Here $t\ge0$ and $%
x\in \mathbb{R}^3$. The unknown functions $\rho, u $ represent the density,
velocity of the fluid respectively, and the pressure $p=p(\rho)$ is a smooth
function in a neighborhood of $\bar{\rho}$ with $p^{\prime}(\bar{\rho})>0$,
where $\bar{\rho}$ is a positive constant. We assume that the constant
viscosity coefficients $\mu$ and $\lambda$ satisfy the usual physical
conditions
\begin{equation}  \label{1viscosity}
\mu>0,\quad \lambda+\frac{2}{3}\mu\ge 0.
\end{equation}

The convergence rate of solutions of the Cauchy problem \eqref{1NS} to the
steady state has been investigated extensively since the first global
existence of small solutions in $H^{3}$ (classical solutions) was proved in
\cite{MN1}. For the initial perturbation small in $H^{3}\cap L^{1}$, \cite{MN2} obtained
\begin{equation}
\norm{(\rho-\bar{\rho},u)(t)}_{L^{2}}\lesssim (1+t)^{-3/4},
\end{equation}%
and for the small initial perturbation belongs to $H^{m}\cap W^{m,1}$ with $%
m\geq 4$, \cite{P} proved the optimal $L^{q}$ decay rate
\begin{equation}
\Vert \nabla ^{k}(\rho -\bar{\rho},u)(t)\Vert _{L^{q}}\lesssim (1+t)^{-\frac{%
3}{2}\left( 1-\frac{1}{q}\right) -\frac{k}{2}},\ \hbox{ for }2\leq p\leq
\infty \hbox{ and }0\leq k\leq 2.
\end{equation}%
By the detailed study of the Green function, the optimal $L^{q},1\leq q\leq
\infty $ decay rates were also obtained in \cite{HZ1,HZ2,LW} for the small
initial perturbation belongs to $H^{m}\cap L^{1}$ with $m\geq 4$. These
results were extended to the exterior problem \cite{KS1,K1} or the half
space problem \cite{KK1,KK2} or with an external potential force \cite{DUYZ1}%
, but without the smallness of $L^{1}$-norm of the initial perturbation. For
the small initial perturbation belongs to $H^{3}$ only, by a weighted energy
method, \cite{M} showed the optimal decay rates
\begin{equation}
\norm{\nabla^k(\rho-\bar{\rho},u)(t)}_{L^{2}}\lesssim (1+t)^{-k/2}\hbox{ for
}k=1,2,\hbox{ and }\norm{(\rho-\bar{\rho},u)(t)}_{L^{\infty }}\lesssim
(1+t)^{-3/4};  \label{1Mat}
\end{equation}%
While based on a differential inequality, \cite{D1,D2} obtained a slower
(than the optimal) decay rate for the problem in unbounded domains with
external force through the pure energy method.

We will apply the energy method illustrated in Theorem \ref{heat} to prove
the $L^2$ optimal decay rate of the solution to the problem \eqref{1NS}. We
rewrite \eqref{1NS} in the perturbation form as
\begin{equation}  \label{1NS2}
\left\{%
\begin{array}{lll}
\displaystyle\partial_t\varrho +\bar{\rho}\mathrm{div}u=-\varrho\mathrm{div}%
u-u\cdot\nabla\varrho &  &  \\
\displaystyle\partial_tu -\bar{\mu}\Delta u -(\bar{\mu}+\bar{\lambda})\nabla%
\mathrm{div}u+\gamma\bar{\rho}\nabla\varrho =-u\cdot\nabla u
-h(\varrho)\left(\bar{\mu}\Delta u+(\bar{\mu}+\bar{\lambda})\nabla\mathrm{div%
} u\right)-f(\varrho)\nabla\varrho &  &  \\
(\varrho,u)|_{t=0}=(\varrho_0, u _0), &  &
\end{array}%
\right.
\end{equation}
where $\varrho=\rho-\bar{\rho}$, $\bar{\mu}=\mu/\bar{\rho},\ \bar{\lambda}%
=\lambda/\bar{\rho},\ \gamma={p^{\prime}(\bar{\rho})}/{\bar{\rho}^2}$, and
the two nonlinear functions of $\varrho$ are defined by
\begin{equation}  \label{1 h and f}
h(\varrho):=\frac{\varrho}{\varrho+\bar{\rho}} \hbox{ and } f(\varrho):=%
\frac{p^{\prime}(\varrho+\bar{\rho})}{{\varrho+\bar{\rho}}}-\frac{p^{\prime}(%
\bar{\rho})}{\bar{\rho}}.
\end{equation}
Then our main results are stated in the following theorem:

\begin{theorem}
\label{1mainth} Assume that $(\varrho _{0},u_{0})\in H^{N}$ for an integer $%
N\geq 3$. Then there exists a constant $\delta _{0}>0$ such that if
\begin{equation}
\norm{\varrho_0}_{H^{\left[ \frac{N}{2}\right] +2}}+\norm{u_0 }_{H^{\left[
\frac{N}{2}\right] +2}}\leq \delta _{0},  \label{1Hn/2}
\end{equation}%
then the problem \eqref{1NS2} admits a unique global solution $(\varrho
(t),u(t))$ satisfying that for all $t\geq 0$,
\begin{equation}
\norm{ \varrho(t)}_{H^{m}}^{2}+\norm{ u(t)}_{H^{m}}^{2}+\int_{0}^{t}%
\norm{\nabla\varrho(\tau)}_{H^{m-1}}^{2}+\norm{\nabla u(\tau)}%
_{H^{m}}^{2}\,d\tau \leq C\left( \norm{ \varrho_0}_{H^{m}}^{2}+\norm{ u_0}%
_{H^{m}}^{2}\right) ,  \label{1HNbound}
\end{equation}%
where $\left[ \frac{N}{2}\right] +2\leq m\leq N$. If further, $\varrho
_{0},u_{0}\in \dot{H}^{-s}$ for some $s\in \lbrack 0,3/2)$, then for all $%
t\geq 0$,
\begin{equation}
\norm{\Lambda^{-s} \varrho(t)}_{L^{2}}^{2}+\norm{\Lambda^{-s} u(t)}%
_{L^{2}}^{2}\leq C_{0}  \label{1H-sbound}
\end{equation}%
and
\begin{equation}
\norm{\nabla^\ell \varrho(t)}_{H^{N-\ell }}^{2}+\norm{\nabla^\ell u(t)}%
_{H^{N-\ell }}^{2}\leq C_{0}(1+t)^{-(\ell +s)}\ \hbox{ for }-s<\ell \leq N-1.
\label{1decay}
\end{equation}
\end{theorem}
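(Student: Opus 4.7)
My plan is to adapt the three-step scheme of Theorem \ref{heat}. First, derive a family of positive-order energy inequalities at every derivative level $\ell$ (this simultaneously delivers \eqref{1HNbound} and, by continuation, global existence). Second, show that $\|\Lambda^{-s}(\varrho,u)(t)\|_{L^{2}}$ is preserved in time. Third, combine the two via the interpolation of Lemma \ref{1-sinte} to close a scalar ODE yielding \eqref{1decay}.

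For Step~1, I would apply $\nabla^{k}$ ($0\le k\le N-1$) to \eqref{1NS2}, pair with $\nabla^{k}\varrho$ and $\gamma^{-1}\nabla^{k}u$, and add. The principal linear part supplies only $u$-dissipation $\|\nabla^{k+1}u\|_{L^{2}}^{2}$, so I would augment the energy by the classical Matsumura--Nishida cross term $\eta\frac{d}{dt}\int \nabla^{k}u\cdot\nabla\nabla^{k}\varrho$ (with small $\eta>0$), which through the coupling $\gamma\bar\rho\nabla\varrho$ generates the missing $\|\nabla^{k+1}\varrho\|_{L^{2}}^{2}$. The nonlinearities $u\cdot\nabla\varrho$, $\varrho\,\mathrm{div}\,u$, $u\cdot\nabla u$, $h(\varrho)\Delta u$, $f(\varrho)\nabla\varrho$ are handled by Sobolev product and commutator estimates and absorbed into the dissipation by the smallness \eqref{1Hn/2}, which via Sobolev embedding controls the needed $L^{\infty}$ norms and makes $h,f$ smooth bounded functions of $\varrho$. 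Summing $k=\ell,\ldots,N-1$ produces
\[
\tfrac{d}{dt}E_{\ell}(t)+C\bigl(\|\nabla^{\ell+1}\varrho\|_{H^{N-\ell-1}}^{2}+\|\nabla^{\ell+1}u\|_{H^{N-\ell}}^{2}\bigr)\le 0,\quad E_{\ell}(t)\sim\|\nabla^{\ell}(\varrho,u)\|_{H^{N-\ell}}^{2},
\]
for $0\le\ell\le N-1$.

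For Step~2, I apply $\Lambda^{-s}$ to \eqref{1NS2} and pair with $\Lambda^{-s}\varrho$ and $\gamma^{-1}\Lambda^{-s}u$, obtaining
\[
\tfrac{1}{2}\tfrac{d}{dt}\bigl(\|\Lambda^{-s}\varrho\|_{L^{2}}^{2}+\gamma^{-1}\|\Lambda^{-s}u\|_{L^{2}}^{2}\bigr)+\gamma^{-1}\bar\mu\|\Lambda^{-s}\nabla u\|_{L^{2}}^{2}+\cdots\lesssim \|\Lambda^{-s}\mathcal N\|_{L^{2}}\,\|\Lambda^{-s}(\varrho,u)\|_{L^{2}},
\]
where $\mathcal N$ collects the nonlinear terms. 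The crucial bound is on $\|\Lambda^{-s}(fg)\|_{L^{2}}$: by Hardy--Littlewood--Sobolev one reduces to $\|fg\|_{L^{6/(3+2s)}}$, whose exponent exceeds $1$ only when $s<3/2$, the restriction noted in Remark 1.3. H\"older and the Sobolev embedding $\dot H^{3/2-s}\hookrightarrow L^{3/s}$ then yield bounds like $\|\Lambda^{-s}(u\cdot\nabla u)\|_{L^{2}}\lesssim\|u\|_{L^{2}}\|\nabla u\|_{\dot H^{3/2-s}}$. Once time integrability of $A(t):=\|\Lambda^{-s}\mathcal N\|_{L^{2}}$ is secured, a Gronwall argument on $\frac{d}{dt}\sqrt{X}\lesssim A(t)$ with $X=\|\Lambda^{-s}(\varrho,u)\|_{L^{2}}^{2}$ delivers \eqref{1H-sbound}.

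Step~3 is then the heat equation template verbatim: Lemma \ref{1-sinte} gives
\[
\|\nabla^{\ell}(\varrho,u)\|_{L^{2}}\le\|\Lambda^{-s}(\varrho,u)\|_{L^{2}}^{1/(\ell+1+s)}\|\nabla^{\ell+1}(\varrho,u)\|_{L^{2}}^{(\ell+s)/(\ell+1+s)},
\]
and combined with \eqref{1H-sbound} converts the Step~1 dissipation into a superlinear power of the energy,
\[
\tfrac{d}{dt}E_{\ell}+C_{0}E_{\ell}^{1+1/(\ell+s)}\le 0,
\]
which integrates as in \eqref{heat 5''} to give \eqref{1decay} for integer $\ell\ge 0$; the fractional range $-s<\ell<0$ follows by interpolating $\|\Lambda^{\ell}(\varrho,u)\|_{L^{2}}$ between the endpoints $\ell=-s$ (uniform) and $\ell=0$ (rate $s/2$). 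The main obstacle is the circular dependence between Steps~2 and~3: time integrability of $A(t)$ in Step~2 requires the decay furnished by Step~3, but Step~3 uses the uniform $\dot H^{-s}$ bound coming from Step~2. This is broken by a bootstrap --- first running Step~2 with the weak a priori bound $\|\Lambda^{-s}(\varrho,u)(t)\|_{L^{2}}\lesssim(1+t)^{s/2}$ (available from the $H^{N}$ control of Step~1), feeding the resulting suboptimal decay back into Step~3, and iterating until the sharp uniform bound and optimal rate are reached. A secondary classical difficulty in Step~1 is recovering density dissipation, which the Matsumura--Nishida cross term resolves as noted above.
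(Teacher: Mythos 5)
Your overall architecture (level-by-level energy estimates, cross-term for density dissipation, negative Sobolev bound, interpolation to an ODE) matches the paper's. However, there is a genuine gap in Step~2 that propagates into Step~3, and it stems from the choice of H\"older split in the $\Lambda^{-s}$ estimate.

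You estimate $\|\Lambda^{-s}(u\cdot\nabla u)\|_{L^{2}}\lesssim\|u\cdot\nabla u\|_{L^{6/(3+2s)}}\lesssim\|u\|_{L^{2}}\|\nabla u\|_{L^{3/s}}\lesssim\|u\|_{L^{2}}\|\nabla u\|_{\dot H^{3/2-s}}$. The factor $\|u\|_{L^{2}}$ is merely bounded in time (it does not enter the dissipation), so $A(t)=\|\Lambda^{-s}\mathcal N\|_{L^{2}}$ is not integrable in $t$ from the $H^{N}$ bound alone. Your Gronwall inequality $\tfrac{d}{dt}\sqrt X\lesssim A(t)$ then lets $\sqrt X$ grow, and the circularity you flag becomes real. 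The paper's Lemma \ref{1Esle} avoids this by splitting H\"older the other way: it places the undifferentiated factor into $L^{3/s}$ and the gradient into $L^{2}$, so for $s\in(0,1/2]$ one has $\|\varrho\|_{L^{3/s}}\lesssim\|\nabla\varrho\|_{L^{2}}^{1-\theta}\|\nabla^{2}\varrho\|_{L^{2}}^{\theta}$ and the right-hand side is $\lesssim\|(\nabla\varrho,\nabla u)\|_{H^{1}}^{2}\sqrt{\mathcal E_{-s}}$. Since $\int_{0}^{\infty}\|(\nabla\varrho,\nabla u)\|_{H^{1}}^{2}\,d\tau<\infty$ from \eqref{1HNbound}, integrating directly gives the quadratic inequality $\mathcal E_{-s}(t)\le C_{0}\bigl(1+\sup_{\tau\le t}\sqrt{\mathcal E_{-s}(\tau)}\bigr)$, which closes with no decay input and no bootstrap whatsoever for $s\in(0,1/2]$.

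A second, smaller issue is the bootstrap you outline for the remaining range. You start from the a priori bound $\|\Lambda^{-s}(\varrho,u)(t)\|_{L^{2}}\lesssim(1+t)^{s/2}$ ``available from the $H^{N}$ control,'' but \eqref{1HNbound} gives no control on negative-index Sobolev norms, so this starting point is unjustified. The paper's argument for $s\in(1/2,3/2)$ is a clean two-stage one, not an open iteration: the hypothesis $\dot H^{-s}\cap L^{2}\subset\dot H^{-1/2}$ lets you first run the full argument with $s'=1/2$, and the resulting decay $\|(\varrho,u)\|_{L^{2}}\lesssim(1+t)^{-1/4}$, $\|(\nabla\varrho,\nabla u)\|_{H^{1}}\lesssim(1+t)^{-3/4}$ makes the integrand $\|(\varrho,u)\|_{L^{2}}^{s-1/2}\|(\nabla\varrho,\nabla u)\|_{H^{1}}^{5/2-s}$ in Lemma \ref{1Esle} time-integrable (it decays like $(1+\tau)^{-(7/4-s/2)}$ with exponent $>1$), after which \eqref{1H-sbound} follows and Step~3 applies verbatim.
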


\begin{Remark}
For $N=3$ and $s=0$, our decay rates \eqref{1decay} coincide with %
\eqref{1Mat} of \cite{M}. While for the other cases, our results are
completely new. Notice that we do not assume that $\dot{H}^{-s}$ norm of initial data is small and this norm enhances the decay rate of the solution to be faster than that of \cite{M}. The constraint $s<3/2$ comes
from applying Lemma \ref{1Riesz} to estimate the nonlinear terms when doing
the negative Sobolev estimates via $\Lambda ^{-s}$. For $s\geq 3/2$, the
nonlinear estimates would not work.
\end{Remark}

\begin{Remark}
Notice that we only assume that the lower order Sobolev norm of initial data is small, while the higher order Sobolev norm can be arbitrarily large. Although one may replace the range of smallness $[\frac{N}{2}]+2$ by a smaller number (e.g. $3$) by refining the energy estimates, this is beyond our primary interest in this paper.
\end{Remark}

The proof of Theorem \ref{1mainth} will be presented in Section 3, which is
inspired by the proof of Theorem \ref{heat}. However, we will be
not able to close the energy estimates at each $\ell $-th level as the heat
equation. This is essentially caused by the ``degenerate" dissipative structure of the linear homogenous system of \eqref{1NS2} when using our energy method. More precisely, the linear energy identity of the problem reads as: for $\ell=0,\dots,N$,
\begin{equation}\label{1energy identity}
\frac{1}{2}\frac{d}{dt}\int_{\mathbb{R}^3}\gamma|\nabla^\ell \varrho|^2+|\nabla^\ell u|^2\,dx
+\int_{\mathbb{R}^3}\bar{\mu}|\nabla \nabla^\ell u|^2+(\bar{\mu}+\bar{\lambda})|\diverge \nabla^\ell u|^2\,dx=0.
\end{equation}
The constraint \eqref{1viscosity} implies that there exists a constant $\sigma_0>0$ such that
\begin{equation}\label{1coercive}
\int_{\mathbb{R}^3}\bar{\mu}|\nabla \nabla^\ell u|^2+(\bar{\mu}+\bar{\lambda})|\diverge \nabla^\ell u|^2\,dx
\ge \sigma_0 \norm{\nabla^{\ell+1} u}_{L^2}^2.
\end{equation}
Note that \eqref{1energy identity} and \eqref{1coercive} only give the dissipative estimate for $u$. To rediscover the dissipative estimate for $\varrho$, we will use the linearized equations of $\eqref{1NS2}$ via constructing the interactive energy functional between $u$ and $\nabla\rho$ to deduce
\begin{equation}\label{1daf}
\frac{d}{dt}\int_{\mathbb{R}^3} \nabla^\ell u\cdot\nabla\nabla^\ell \varrho\,dx +C%
\norm{\nabla^{\ell+1}\varrho}_{L^2}^2 \lesssim\norm{\nabla^{\ell+1}u}_{L^2}^2+%
\norm{\nabla^{\ell+2}u}_{L^2}^2.
\end{equation}
This implies that to get the dissipative estimate for $\nabla^{\ell+1}\varrho$ it requires us to do the energy estimates \eqref{1energy identity} at both the $\ell$-th and the $\ell+1$-th levels (referring to the order of the spatial derivatives of the solution). To get around this obstacle, the idea is to construct some energy functionals ${\mathcal{E}}%
_{\ell }^{m}(t),\ \left[ \frac{N}{2}\right] +2\leq
m\leq N$ and $0\leq \ell \leq m-1$ (less than $m-1$ is restricted by \eqref{1daf}),
\begin{equation}
{\mathcal{E}}_{\ell }^{m}(t)\backsim \sum_{\ell \leq k\leq m}%
\norm{\left[\nabla^k \varrho(t),\nabla^k u(t)\right]}_{L^2}^{2},  \notag
\end{equation}%
which has a \textit{minimum} derivative count $\ell .$ We will then close the energy estimates at each $\ell $-th level in a weak sense by deriving
the Lyapunov-type inequality (cf. \eqref{1proof5}) for these energy
functionals in which the corresponding dissipation (denoted by ${\mathcal{D}}%
_{\ell }^{m}(t)$) can be related to the energy ${\mathcal{E}}_{\ell }^{m}(t)$
similarly as \eqref{heat 4} by the Sobolev interpolation. This can be easily
established for the linear homogeneous problem along our analysis, however,
for the nonlinear problem \eqref{1NS2}, it is much more complicated due to
the nonlinear estimates. This is the second point of this paper that we will
extensively and carefully use the Sobolev interpolation of the
Gagliardo-Nirenberg inequality between high-order and low-order spatial
derivatives to bound the nonlinear terms by $\sqrt{\mathcal{E}_{0}^{\left[
N/2\right] +2}(t)}{\mathcal{D}}_{\ell }^{m}(t)$ that can be absorbed. When deriving the negative Sobolev
estimates, we need to restrict that $s<3/2$ in order to estimate $\Lambda ^{-s}$ acting on the
nonlinear terms by using the Hardy-Littlewood-Sobolev inequality, and also
we need to separate the cases that $s\in (0,1/2]$ and $s\in (1/2,3/2)$. Once
these estimates are obtained, Theorem \ref{1mainth} follows by the
interpolation between negative and positive Sobolev norms similarly as in
the proof of Theorem \ref{heat}.


\subsection{Main results for Boltzmann equation}


The dynamics of dilute particles can be described by the Boltzmann equation:
\begin{equation}
\partial_tF+v\cdot\nabla_xF=Q(F,F),
\end{equation}
with initial data $F(0,x,v)=F_0(x,v)$. Here $F=F(t,x,v)\ge 0$ is the number
density function of the particles at time $t\ge 0$, position $%
x=(x_1,x_2,x_3)\in \mathbb{R}^3$ and velocity $v=(v_1,v_2,v_3)\in \mathbb{R}%
^3$. The collision between particles is given by the standard Boltzmann
collision operator $Q(h_1,h_2)$ with hard-sphere interaction:
\begin{equation}  \label{Boltzmann operator}
Q(h_1,h_2)(v)=\int_{\mathbb{R}^3}\int_{\mathbb{S}^2}|(u-v)\cdot\omega|%
\{h_1(v^{\prime})h_2(u^{\prime})-h_1(v)h_2(u)\}\, d\omega\, du.
\end{equation}
Here $\omega\in \mathbb{S}^2$, and
\begin{equation}
v^{\prime}=v-[(v-u)\cdot\omega]\omega,\quad
u^{\prime}=u+[(v-u)\cdot\omega]\omega,
\end{equation}
which denote velocities after a collision of particles having velocities $v$%
, $u$ before the collision and vice versa.

We denote a normalized global Maxwellian by
\begin{equation}
\mu(v)= \mathrm{e}^{-|v|^2/2},
\end{equation}
and define the standard perturbation $f(t,x,v)$ to $\mu$ as
\begin{equation}
F=\mu+\sqrt{\mu}f.
\end{equation}
The Boltzmann equation for the perturbation $f$ now takes the form
\begin{equation}  \label{perturbation}
\partial_tf + v\cdot\nabla_xf + L f=\Gamma(f,f),
\end{equation}
with initial data $f(0,x,v)=f_0(x,v)$. Here the linearized collision
operator $L$ is given by
\begin{equation}
L h=-\frac{1}{\sqrt{\mu}}\{Q(\mu,\sqrt{\mu}h)+Q(\sqrt{\mu}h,\mu)\},
\end{equation}
and the nonlinear collision operator (non-symmetric) is
\begin{equation}
\Gamma(h_1,h_2)=\frac{1}{\sqrt{\mu}}Q(\sqrt{\mu}h_1,\sqrt{\mu}h_2).
\end{equation}

It is well-known that the operator $L\ge 0$, and for any fixed $(t,x)$, the
null space of $L$ is
\begin{equation}
\mathcal{N}=\mathrm{span}\left\{\sqrt{\mu},v\sqrt{\mu},|v|^2\sqrt{\mu}%
\right\}.
\end{equation}
For any fixed $(t,x)$, we define $\mathbf{P}$ as the $L^2_v$ orthogonal
projection on the null space $\mathcal{N}$. Thus for any function $f(t,x,v)$
we can decompose
\begin{equation}
f=\mathbf{P}f+\{\mathbf{I-P}\}f.
\end{equation}
Here $\mathbf{P}f$ is called the hydrodynamic part of $f$, and $\{\mathbf{I-
P}\}f$ is the microscopic part.

\noindent \textbf{Notation 2.} In the context of the Boltzmann equation, we
shall use $\langle \cdot ,\cdot \rangle $ to denote the $L^{2}$ inner
product in $\mathbb{R}_{v}^{3}$ with corresponding $L^{2}$ norm $|\cdot
|_{2}$, while we use $(\cdot ,\cdot )$ to denote the $L^{2}$ inner
product either in $\mathbb{R}_{x}^{3}\times \mathbb{R}_{v}^{3}$ or in $%
\mathbb{R}_{x}^{3}$ with $L^{2}$ norm $\Vert \cdot \Vert _{L^{2}}$ without
any ambiguity. We shall simply use $L_{x}^{2}$, $L_{v}^{2}$ to denote $L^{2}(%
\mathbb{R}_{x}^{3})$ and $L^{2}(\mathbb{R}_{v}^{3})$ respectively, etc. We
will use the notation $L_{v}^{2}H_{x}^{s}$ to denote the space $L^{2}(%
\mathbb{R}_{v}^{3};H_{x}^{s})$ with norm
\begin{equation}
\norm{h}_{L_{v}^{2}H_{x}^{s}}=\left( \int_{\mathbb{R}_{v}^{3}}\norm{f}%
_{H_{x}^{s}}^{2}\,dv\right) ^{1/2},
\end{equation}%
and similarly we use the notations of $L_{v}^{2}\dot{H}_{x}^{s}$, $%
L_{v}^{2}L_{x}^{p}$ and $L_{x}^{p}L_{v}^{2}$, etc. For the Boltzmann
operator \eqref{Boltzmann operator}, we define the collision frequency as
\begin{equation}
\nu (v)=\int_{\mathbb{R}^{3}}|v-u|\mu (u)du,
\end{equation}%
which behaves like $1+|v|$. We define the weighted $L^{2}$ norms
\begin{equation}
|g|_{\nu }^{2}=|\nu ^{1/2}g|_{2}^{2},\quad \Vert g\Vert _{\nu }^{2}=\Vert
\nu ^{1/2}g\Vert ^{2}.
\end{equation}%
We denote $L_{\nu }^{2}$ by the weighted space with norm $\Vert \cdot \Vert
_{\nu }$.

We will apply the energy method illustrated in Theorem \ref{heat} to prove
the $L^{2}$ optimal decay rate of the solution to the problem %
\eqref{perturbation}. Main results are stated in the following theorems.

\begin{theorem}
\label{theorem1} Assume that $f_{0}\in L_{v}^{2}H_{x}^{N}$ for an integer $%
N\geq 3$. Then there exists a constant $\delta _{0}>0$ such that if
\begin{equation}
\sum_{0\leq k\leq N}\norm{\nabla^k f_0}_{L^2}\leq \delta _{0},  \label{initial ass}
\end{equation}%
then the problem \eqref{perturbation} admits a unique global solution $%
f(t,x,v)$ satisfying that for all $t\geq 0$,
\begin{equation}
\sum_{0\leq k\leq N}\norm{\nabla^k f(t)}_{L^2}^{2}+\int_{0}^{t}\norm{ \{{\bf
I-P}\} f(\tau)}_{\nu }^{2}+\sum_{1\leq k\leq N}\norm{\nabla^k f(\tau)}_{\nu
}^{2}\,d\tau \leq C\sum_{0\leq k\leq N}\norm{\nabla^k f_0}_{L^2}^{2}.
\label{energy es}
\end{equation}%
If further, $f_{0}\in L_{v}^{2}\dot{H}_{x}^{-s}$ for some $s\in \lbrack
0,3/2)$, then for all $t\geq 0$,
\begin{equation}
\norm{\Lambda^{-s}f(t)}_{L^2}\leq C_{0}  \label{H-sbound}
\end{equation}%
and
\begin{equation}
\sum_{\ell \leq k\leq N}\norm{\nabla^k f(t)}_{L^2}^{2}\leq C_{0}(1+t)^{-(\ell
+s)}\,\hbox{ for }-s<\ell \leq 1,  \label{decay1}
\end{equation}%
and
\begin{equation}
\norm{\{{\bf I-P}\} f(t)}_{L^2}^{2}\leq C_{0}(1+t)^{-(1+s)}.  \label{decay1'}
\end{equation}

Furthermore,  if
\begin{equation}
\sum_{0\leq k\leq N}\norm{\nabla^k f_0}_{L^2}+\norm{ f_0}_{\nu }\leq \delta _{0},
\label{initial ass2}
\end{equation}%
then for all $t\geq 0$,
\begin{equation}
\sum_{\ell \leq k\leq N}\norm{\nabla^k f(t)}_{L^2}^{2}\leq C_{0}(1+t)^{-(\ell
+s)}\,\hbox{ for }-s<\ell \leq N-1,  \label{decay2}
\end{equation}%
and
\begin{equation}
\norm{\nabla^\ell\{{\bf I-P}\} f(t)}_{L^2}^{2}\leq C_{0}(1+t)^{-(\ell +1+s)}\,%
\hbox{ for }-s<\ell \leq N-2.  \label{decay2'}
\end{equation}
\end{theorem}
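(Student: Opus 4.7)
The plan is to follow the four-step strategy used for the compressible Navier--Stokes system in Theorem \ref{1mainth}, adapted to the two features that distinguish the Boltzmann equation: the linear operator $L$ is coercive only on the microscopic part, $\langle Lf,f\rangle\gtrsim |\{\mathbf{I-P}\}f|_\nu^2$, so the dissipation for the hydrodynamic part $\mathbf{P}f$ must be recovered; and the nonlinearity $\Gamma$ lives naturally in the $\nu$-weighted space. Global existence and the bound \eqref{energy es} would follow from a standard continuity argument based on an a priori inequality of the form
\[
\tfrac{d}{dt}\sum_{0\le k\le N}\norm{\nabla^k f}_{L^2}^2 + c_0\Bigl(\norm{\{\mathbf{I-P}\}f}_\nu^2+\sum_{1\le k\le N}\norm{\nabla^k f}_\nu^2\Bigr)\le C\sqrt{\mathcal E_0^{N}}\,\mathcal D_0^{N},
\]
where the nonlinear term $(\nabla^k\Gamma(f,f),\nabla^k f)$ is handled by the usual trilinear estimate of $\Gamma$ together with a high/low splitting of derivatives and the Gagliardo--Nirenberg inequality, with smallness coming from \eqref{initial ass}.

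Next I would build a family of functionals $\mathcal E_\ell^m$, $0\le\ell\le m-1$, $[N/2]+2\le m\le N$, with minimum derivative count $\ell$, in parallel with the CNS construction. Dissipation for $\nabla^{k+1}\mathbf{P}f$ is recovered by introducing an interactive functional built from the fluid moments of $\mathbf{P}f$: those moments obey a local conservation system of the same ``degenerate'' type as \eqref{1daf}, so after absorbing the nonlinear contributions by the Sobolev interpolation bookkeeping of the CNS proof (bounded by $\sqrt{\mathcal E_0^{[N/2]+2}}\,\mathcal D_\ell^m$) one obtains
\[
\tfrac{d}{dt}\mathcal E_\ell^m + c_0 \mathcal D_\ell^m \le 0,\qquad \mathcal D_\ell^m\sim \sum_{\ell\le k\le m}\norm{\nabla^k\{\mathbf{I-P}\}f}_\nu^2 + \sum_{\ell+1\le k\le m}\norm{\nabla^k \mathbf{P}f}_{L^2}^2.
\]
For the negative Sobolev estimate I apply $\Lambda^{-s}$ to \eqref{perturbation} and pair with $\Lambda^{-s}f$, obtaining
\[
\tfrac12\tfrac{d}{dt}\norm{\Lambda^{-s}f}_{L^2}^2 + c_0\norm{\Lambda^{-s}\{\mathbf{I-P}\}f}_\nu^2 \le \bigl|(\Lambda^{-s}\Gamma(f,f),\Lambda^{-s}f)\bigr|,
\]
whose right-hand side is controlled by the Hardy--Littlewood--Sobolev inequality (Lemma \ref{1Riesz}). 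This forces $s<3/2$ and, exactly as in the CNS case, requires splitting into $s\in(0,1/2]$ and $s\in(1/2,3/2)$, with a short interpolation through already-known positive Sobolev bounds in the latter range; integrating in time together with \eqref{energy es} yields \eqref{H-sbound}.

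The decay rates then follow by the heat-equation argument of Theorem \ref{heat}: Sobolev interpolation between the $\dot H^{-s}$ bound and the dissipation gives, for $\ell\ge 0$,
\[
\mathcal E_\ell^N \lesssim \bigl(\norm{\Lambda^{-s}f}_{L^2}\bigr)^{2/(\ell+s+1)}(\mathcal D_\ell^N)^{(\ell+s)/(\ell+s+1)},
\]
which converts the Lyapunov inequality into $\tfrac{d}{dt}\mathcal E_\ell^N+C(\mathcal E_\ell^N)^{1+1/(\ell+s)}\le 0$, integrating to $(1+t)^{-(\ell+s)}$, i.e.\ \eqref{decay1}. The sharper microscopic rate \eqref{decay1'} comes for free because the $\{\mathbf{I-P}\}f$ piece of $\mathcal D_\ell^N$ sits at the same derivative level as $\mathcal E_\ell^N$, providing one extra power of $(1+t)^{-1}$ in the Bernoulli-type integration. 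The rates \eqref{decay2}--\eqref{decay2'} under the stronger assumption \eqref{initial ass2} require propagating the $\nu$-weighted bound; once the corresponding weighted hierarchy closes up to the top level $\ell=N-1$, the same interpolation delivers the stated exponents.

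The hardest point will be the nonlinear estimates in the negative-Sobolev step and their compatibility with the full hierarchy $\{\mathcal E_\ell^m\}$: every contribution from $\Lambda^{-s}\Gamma(f,f)$ and from the higher-order $\nabla^k\Gamma(f,f)$ must be absorbed either by the smallness factor $\sqrt{\mathcal E_0^{[N/2]+2}}$ or by the dissipation, with no appeal to any linear spectral decay. Because of the velocity variable and the $\nu$-weighted norms that enter the trilinear $\Gamma$-estimate, the bookkeeping is substantially heavier than in the Navier--Stokes case, and verifying that $s<3/2$ really is the borderline for the Hardy--Littlewood--Sobolev application is the decisive technical step.
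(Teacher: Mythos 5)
Your outline captures the overall philosophy (pure energy method, recover $\mathbf{P}f$ dissipation, negative-Sobolev interpolation) but contains three concrete errors that show the Boltzmann case does not simply mirror the compressible Navier--Stokes argument.

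First, you import the two-parameter family $\mathcal E_\ell^m$ with $[N/2]+2\le m\le N$ and claim the nonlinear contributions can all be bounded by $\sqrt{\mathcal E_0^{[N/2]+2}}\,\mathcal D_\ell^m$. The paper never uses this two-parameter family for the Boltzmann equation; it uses a single-parameter family $\mathcal E_\ell$ with $m=N$ fixed, and it assumes smallness of the \emph{full} $H^N$ norm in \eqref{initial ass}. The reason is that the trilinear estimate for $\Gamma$ forces a $\nu$-weighted factor on one of the two arguments, and when the low-derivative factor carries the $\nu$-weight (e.g.\ $|\nabla^{k-|\gamma_1|}\{\mathbf{I-P}\}f|_\nu$ with $k-|\gamma_1|$ small) there is no way to trade the weight for a higher derivative count. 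The result (Lemma \ref{nonlinear lemma 2}) is that the nonlinear estimate leaves an unabsorbed term $\sum_{1\le j\le N}\norm{\nabla^j\{\mathbf{I-P}\}f}_\nu^2$ on the right-hand side of \eqref{energy estimate 5}. This is precisely why \eqref{decay1} is stated only for $-s<\ell\le 1$: for $\ell=0,1$ the leftover term is dominated by the $\ell$-level dissipation, but for $\ell\ge 2$ it is not, and you need the extra $\nu$-weighted smallness \eqref{initial ass2} to upgrade to Lemma \ref{nonlinear lemma 3}, which confines the $\nu$-weighted factor to level $k$ and thereby lets the hierarchy close for all $\ell\le N-1$. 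Your proposal glosses over this obstruction and so cannot account for the split between \eqref{decay1} (restricted to $\ell\le1$) and \eqref{decay2} (requiring \eqref{initial ass2}).

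Second, your explanation of \eqref{decay1'} --- that the $\{\mathbf{I-P}\}f$ piece of the dissipation ``sits at the same derivative level'' and hence ``provides one extra power of $(1+t)^{-1}$ in the Bernoulli-type integration'' --- is not a valid argument: the Bernoulli trick requires the dissipation to be one derivative \emph{higher} than the energy so that interpolation against the $\dot H^{-s}$ bound can close. The paper instead proves \eqref{decay1'} (and \eqref{decay2'}) from separate energy identities for the projected equation \eqref{micro equation} (Lemmas \ref{lemma micro 2} and \ref{lemma micro 3}), giving $\frac{d}{dt}\norm{\nabla^k\{\mathbf{I-P}\}f}_{L^2}^2+\sigma_0\norm{\nabla^k\{\mathbf{I-P}\}f}_\nu^2\lesssim\norm{\nabla^{k+1}f}_{L^2}^2$, to which Gronwall's inequality is applied using the already-established decay of $\norm{\nabla^{k+1}f}_{L^2}^2$. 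This is a genuinely different mechanism from what you describe and needs to be supplied.

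Finally, a smaller but real omission: you do not exploit the splitting $f=\mathbf{P}f+\{\mathbf{I-P}\}f$ inside the nonlinear trilinear estimate, nor the Minkowski inequality to exchange the $x$- and $v$-integrations, both of which the paper emphasizes as essential to make the Gagliardo--Nirenberg bookkeeping work for functions on $\mathbb{R}_x^3\times\mathbb{R}_v^3$. Your Hardy--Littlewood--Sobolev/negative-Sobolev step and your final interpolation to $(1+t)^{-(\ell+s)}$ are on the right track, but without the items above the proof does not close.
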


\begin{Remark}
Notice that similarly to that of the compressible Navier-Stokes equations we
do not need to assume that $L^2_v\dot{H}^{-s}_x$ norm of initial data is
small and this norm enhances the decay rate of the solution. The constraint $%
s<3/2$ also comes from applying Lemma \ref{1Riesz} to estimate the nonlinear
terms when doing the negative Sobolev estimates via $\Lambda^{-s}$.
\end{Remark}

The proof of Theorems \ref{theorem1} will be presented in Section 4, which
is also inspired by the proof of Theorem \ref{heat}. However, similarly to the compressible Navier-Stokes equations, we will be
not able to close the energy estimates at each $\ell $-th level and this is caused by the ``degenerate" dissipative structure of the linear homogenous system of \eqref{perturbation} when using our energy method. More precisely, the linear energy identity of the problem reads as:
\begin{equation}\label{energy identity boltzmann}
\frac{1}{2}\frac{d}{dt}\norm{\nabla^\ell f}_{L^2}^2+(L\nabla^\ell f,\nabla^\ell f)=0.
\end{equation}
It is well-known that $L$ is only positively definite with respect to the microscopic part $\{{\bf I-P}\}f$, that is, there exists a constant $\sigma_0>0$ such that
\begin{equation}\label{coercive boltzmann}
(L \nabla^\ell f, \nabla^\ell f)\ge \sigma_0 \norm{\nabla^\ell\{{\bf I-P}\} f}_\nu^2.
\end{equation}
To rediscover the dissipative estimate for the hydrodynamic part
${\bf P}f$, we will use the linearized equation of \eqref{perturbation} via constructing the interactive energy functional $G_\ell$ between $\nabla^\ell f$ and $\nabla^{\ell+1} f$ to deduce
\begin{equation}\label{3232}
\frac{dG_\ell}{dt}+\norm{\nabla^{\ell+1} \mathbf{P }f }_{L^2}^2 \lesssim  \norm{\nabla^{\ell}\{%
\mathbf{I-P }\} f  }_{L^2}^2+\norm{\nabla^{\ell+1}\{\mathbf{I-P }\} f  }_{L^2}^2.
\end{equation}
This implies that to get the dissipative estimate for $\nabla^{\ell+1}{\bf P}f$ it requires us to do the energy estimates \eqref{energy identity boltzmann} at both the $\ell$-th and the $\ell+1$-th levels (referring to the order of the spatial derivatives of the solution). To get around this obstacle, the idea is to construct some energy functionals ${\mathcal{E}}_{\ell }(t)$, $0\leq \ell \leq N-1$ (less than $N-1$ is restricted by \eqref{3232}),
\begin{equation}
{\mathcal{E}}_{\ell }(t)\backsim \sum_{\ell \leq k\leq N}\norm{\nabla^k f(t)}_{L^2}%
^{2},  \notag
\end{equation}%
which has a \textit{minimum} derivative count of \ $\ell ,$ and we will
derive the Lyapunov-type inequalities (cf. \eqref{energy estimate 5} and %
\eqref{energy estimate 9}) for these energy functionals in which the
corresponding dissipation (denoted by ${\mathcal{D}}_{\ell }(t)$) can be
related to the energy ${\mathcal{E}}_{\ell }(t)$ similarly as \eqref{heat 4}
by the Sobolev interpolation. This can be easily established for the linear
homogeneous problem along our analysis, however, for the nonlinear problem %
\eqref{perturbation}, we shall use extensively the Sobolev interpolation of
the the Gagliardo-Nirenberg inequality (for the functions defined on $%
\r3_{x}\times \r3_{v}$) between high-order and low-order
spatial derivatives to expect to bound the nonlinear terms by $\sqrt{%
\mathcal{E}_{0}(t)}{\mathcal{D}}_{\ell }(t)$ that can be absorbed. But this
can not be achieved well at this moment and we will be left with one extra
term related to a sum of velocity-weighted norms of $f$, as stated in %
\eqref{energy estimate 5}. Note that when taking $\ell =0,1$ in %
\eqref{energy estimate 5}, we can absorb this unpleasant term.
While for $\ell \geq 2$, we need to assume the weighted norm of the initial
data. With the help of these weighted norms, we will succeed in removing
this sum of velocity-weighted norms from the right hand side of %
\eqref{energy estimate 5} to get \eqref{energy estimate 9} in which we can
take $\ell =2,\dots ,N-1$. To estimate the negative Sobolev norm in Lemma %
\ref{lemma H-s}, we need to restrict that $s<3/2$ when estimating $\Lambda
^{-s}$ acting on the nonlinear terms, and we also need to separate the cases
that $s\in (0,1/2]$ and $s\in (1/2,3/2)$. We remark that it is also
important that we use the Minkowski's integral inequality to exchange the
order of integrations in $v$ and $x$ in order to estimate the nonlinear
terms and that we extensively use the splitting $f=\mathbf{P}f+{\{\mathbf{I-P%
}}\}f$.

We end this subsection by reviewing some previous related works on the
global existence and the time decay rates of solutions to the Boltzmann
equation. The existence of global solutions near Maxwellians has been
established in various function spaces, see \cite%
{U1974,NI1976,S1983,G2004,LYY2004,UY2006,G2006} for instance. It was
also shown in \cite{U1974,S1983,G2006} that the solutions in the
periodic domain or bounded domain decay in time at the exponential rate and
in \cite{NI1976,UY2006} that the solutions in the whole space decay at the
optimal algebraic rate of $(1+t)^{-3/4}$ if additionally the initial
perturbation is small in $L_{v}^{2}L_{x}^{1}$. On the other hand, some
analogous theorems of global existence and decay rate of the solutions to
the Boltzmann equation with forces have also been established; see \cite%
{UYZ2005,UYZ,DUYZ2} for the Boltzmann equation with external forces, \cite%
{G2002,YZ,YY,DS1} for the Vlasov-Poisson-Boltzmann system and \cite%
{G2003,S2006,J2009,DS2} for the Vlasov-Maxwell-Boltzmann system. However,
among these references the optimal decay rates of the solution have been
only established under the additional assumption that the initial
perturbation is small in $L_{v}^{2}L_{x}^{1}$. Based on the techniques of
using an time differential inequality in \cite{D1} and the pure energy
method, \cite{UYZ} and \cite{YZ} obtained the convergence rates (but slower
than the optimal rates) for the Boltzmann equation with external potential
force and the Vlasov-Poisson-Boltzmann system respectively.


\section{Compressible Navier-Stokes equations}

\label{sec 3} 

\subsection{Energy estimates}

\label{sec 3.1} 

In this subsection, we will derive the a priori nonlinear energy estimates
for the system \eqref{1NS2}. Hence we assume a priori that for sufficiently
small $\delta>0$,
\begin{equation}  \label{1a priori}
\norm{\varrho(t)}_{H^{\left[\frac{N}{2}\right]+2}}+\norm{u(t)}_{H^{\left[%
\frac{N}{2}\right]+2}}\le \delta.
\end{equation}

First of all, by \eqref{1a priori} and Sobolev's inequality, we obtain
\begin{equation}
{\bar{\rho}}/{2}\le \varrho+\bar{\rho}\le 2\bar{\rho}.
\end{equation}
Hence, we immediately have
\begin{equation}
|h(\varrho)|,|f(\varrho)|\le C|\varrho|\hbox{ and } |h^{(k)}(%
\varrho)|,|f^{(k)}(\varrho)| \le C\hbox{ for any }k\ge 1.  \label{1hf}
\end{equation}
where $h$ and $f$ are nonlinear functions of $\varrho$ defined by \eqref{1 h
and f}. Next, to estimate the $L^\infty$ norm of the spatial derivatives of $%
h$ and $f$, we shall record the following estimate:

\begin{lemma}
\label{1composition} Assume that $\norm{\varrho}_{H^2}\le 1$. Let $g(\varrho)
$ be a smooth function of $\varrho$ with bounded derivatives, then for any
integer $m\ge1$ we have
\begin{equation}
\norm{\nabla^m(g(\varrho))}_{L^\infty} \lesssim\norm{\nabla^m\varrho}%
_{L^2}^{1/4}\norm{\nabla^{m+2}\varrho}_{L^2} ^{3/4}.
\end{equation}
\end{lemma}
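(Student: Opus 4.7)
The plan is to reduce the estimate to two standard ingredients: a Gagliardo--Nirenberg interpolation applied to $F:=\nabla^m(g(\varrho))$, and a Moser-type composition estimate bounding $\|\nabla^k(g(\varrho))\|_{L^2}$ by $\|\nabla^k\varrho\|_{L^2}$ for $k=m$ and $k=m+2$. The first step uses the standard three-dimensional interpolation
\[
\|F\|_{L^\infty}\lesssim \|F\|_{L^2}^{1/4}\|\nabla^2 F\|_{L^2}^{3/4},
\]
whose exponent pair $(1/4,3/4)$ is precisely what appears in the claim, so applying it with $F=\nabla^m(g(\varrho))$ reduces the lemma to proving
\[
\|\nabla^k(g(\varrho))\|_{L^2}\lesssim \|\nabla^k\varrho\|_{L^2},\qquad k\in\{m,m+2\}.
\]

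For this composition estimate, I would expand $\nabla^k(g(\varrho))$ by Fa\`a di Bruno's formula into a finite sum of terms of the shape
\[
g^{(q)}(\varrho)\prod_{i=1}^k(\nabla^i\varrho)^{p_i},\qquad \sum_{i=1}^k i\,p_i=k,\quad q=\sum_{i=1}^k p_i.
\]
The hypothesis $\|\varrho\|_{H^2}\le 1$ together with the Sobolev embedding $H^2(\r3)\hookrightarrow L^\infty$ gives $\|\varrho\|_{L^\infty}\lesssim 1$, so every coefficient $g^{(q)}(\varrho)$ is uniformly bounded by the assumption on $g$. For the product, the plan is to apply H\"older in $\r3$ with exponents $2k/i$ and then the Gagliardo--Nirenberg bounds
\[
\|\nabla^i\varrho\|_{L^{2k/i}}\lesssim \|\varrho\|_{L^\infty}^{1-i/k}\|\nabla^k\varrho\|_{L^2}^{i/k},\qquad 1\le i\le k,
\]
which hold in three dimensions as soon as $k\ge 2$. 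A direct exponent count using $\sum ip_i=k$ then gives
\[
\Bigl\|\prod_i(\nabla^i\varrho)^{p_i}\Bigr\|_{L^2}\lesssim \|\varrho\|_{L^\infty}^{q-1}\|\nabla^k\varrho\|_{L^2}\lesssim \|\nabla^k\varrho\|_{L^2},
\]
which settles every $k\ge 2$. The borderline case $k=m=1$ is immediate from $\nabla(g(\varrho))=g'(\varrho)\nabla\varrho$ together with $|g'(\varrho)|\lesssim 1$; the companion norm $\|\nabla^{m+2}(g(\varrho))\|_{L^2}=\|\nabla^3(g(\varrho))\|_{L^2}$ is still covered by the general argument.

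I do not expect a genuine obstacle here. The only delicate step is the exponent bookkeeping in the Fa\`a di Bruno expansion combined with H\"older, and the minor separation of the $k=1$ endpoint (where the $L^\infty$-endpoint Gagliardo--Nirenberg with one derivative fails in $\r3$); after that, multiplying the two $L^2$ composition bounds into the Gagliardo--Nirenberg interpolation immediately yields the stated estimate.
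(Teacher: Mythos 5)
Your proof is correct, but it takes a genuinely different route from the paper's. You first apply the Gagliardo--Nirenberg $L^\infty$ interpolation to the whole quantity $F=\nabla^m(g(\varrho))$, i.e.\ $\|F\|_{L^\infty}\lesssim\|F\|_{L^2}^{1/4}\|\nabla^2F\|_{L^2}^{3/4}$, which reduces the lemma to the homogeneous Moser composition estimate
\[
\|\nabla^k(g(\varrho))\|_{L^2}\lesssim\|\nabla^k\varrho\|_{L^2},\qquad k\in\{m,m+2\},
\]
and you then establish that by Fa\`a di Bruno, H\"older, and the $L^\infty$--endpoint Gagliardo--Nirenberg inequalities $\|\nabla^i\varrho\|_{L^{2k/i}}\lesssim\|\varrho\|_{L^\infty}^{1-i/k}\|\nabla^k\varrho\|_{L^2}^{i/k}$. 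The paper instead stays at order $m$ throughout: it expands $\nabla^m(g(\varrho))$ by Fa\`a di Bruno first, bounds it by $\prod_i\|\nabla^{\gamma_i}\varrho\|_{L^\infty}$ with $\sum\gamma_i=m$, applies $\|\nabla^{\gamma_i}\varrho\|_{L^\infty}\lesssim\|\nabla^{\gamma_i}\varrho\|_{L^2}^{1/4}\|\nabla^{\gamma_i+2}\varrho\|_{L^2}^{3/4}$ on each factor separately, and finally interpolates each resulting $L^2$ norm between $\|\varrho\|_{L^2}$ (resp.\ $\|\nabla^2\varrho\|_{L^2}$) and $\|\nabla^m\varrho\|_{L^2}$ (resp.\ $\|\nabla^{m+2}\varrho\|_{L^2}$); the identity $\sum\gamma_i=m$ makes the exponents on the high norms sum to one, and the leftover $\|\varrho\|_{H^2}^{n-1}$ is absorbed using $\|\varrho\|_{H^2}\le1$. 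What your route buys is a cleaner packaging of the combinatorics into a single well-known Moser-type composition bound; what it costs is that you need the Fa\`a di Bruno expansion at order $m+2$ as well as $m$, and you invoke the $L^\infty$--weighted form of Gagliardo--Nirenberg, which is not literally the statement of the paper's Lemma \ref{1interpolation} (that one has $L^2$ norms on the right), whereas the paper's argument uses only Lemma \ref{1interpolation} as stated. Both are valid; your $k=1$ caveat is actually unnecessary since the interpolation with $\theta=i/k=1$ is trivial, but handling it directly as you do is harmless.
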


\begin{proof}
Notice that for $m\ge 1$,
\begin{equation}
\nabla^m(g(\varrho))=\hbox{ a sum of products }g^{\gamma_1,\dots,\gamma_n}(%
\varrho)\nabla^{\gamma_1}\varrho\cdots\nabla^{\gamma_n}\varrho,
\end{equation}
where the functions $g^{\gamma_1,\dots,\gamma_n}(\varrho)$ are some
derivatives of $g(\varrho)$ and $1\le \gamma_i\le m,\ i=1,\dots,n$ with $%
\gamma_1+\cdots+\gamma_n=m$. We then use the Sobolev interpolation of Lemma %
\ref{1interpolation} to bound
\begin{equation}
\begin{split}
\norm{\nabla^m(g(\varrho))}_{L^\infty} &\lesssim\norm{\nabla^{\gamma_1}%
\varrho}_{L^\infty}\cdots\norm{\nabla^{\gamma_n}\varrho}_{L^\infty} \\
&\lesssim\left(\norm{\nabla^{\gamma_1}\varrho}_{L^2}\cdots%
\norm{\nabla^{\gamma_n}\varrho}_{L^2}\right)^{1/4}\left(\norm{\nabla^2%
\nabla^{\gamma_1}\varrho}_{L^2}\cdots\norm{\nabla^2\nabla^{\gamma_n}\varrho}%
_{L^2}\right)^{3/4} \\
&\lesssim\left(\norm{ \varrho}_{L^2}^{1-\gamma_1/m}\norm{\nabla^{m}\varrho}%
_{L^2}^{\gamma_1/m}\cdots\norm{ \varrho}_{L^2}^{1-\gamma_n/m}%
\norm{\nabla^{m}\varrho}_{L^2}^{\gamma_n/m}\right)^{1/4} \\
&\quad\times\left(\norm{\nabla^2 \varrho}_{L^2}^{1-\gamma_1/m} %
\norm{\nabla^{m+2}\varrho}_{L^2}^{\gamma_1/m}\cdots\norm{\nabla^2 \varrho}%
_{L^2}^{1-\gamma_n/m} \norm{\nabla^{m+2}\varrho}_{L^2}^{\gamma_n/m}%
\right)^{3/4} \\
&\lesssim\norm{\varrho}_{H^2}^{n-1}\norm{\nabla^m\varrho}_{L^2}^{1/4}%
\norm{\nabla^{m+2}\varrho}_{L^2} ^{3/4} .
\end{split}%
\end{equation}
Hence, we conclude our lemma since $\norm{\varrho}_{H^2}\le 1$.
\end{proof}

We begin with the first type of energy estimates including $\rho $ and $u$
themselves.

\begin{lemma}
\label{1Ekle} Assume that $0\le k\le N-1$, then we have
\begin{equation}  \label{1E_k}
\frac{d}{dt}\int_{\mathbb{R}^3} {\gamma} |\nabla^{k} \varrho|^2+|\nabla^{k}
u|^2\,dx+C\norm{\nabla^{k+1} u}_{L^2}^2 \lesssim \delta \left(%
\norm{\nabla^{k+1}\varrho}_{L^2}^2+\norm{\nabla^{k+1} u}_{L^2}^2\right).
\end{equation}
\end{lemma}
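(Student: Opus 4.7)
The plan is to differentiate the system \eqref{1NS2} by $\nabla^k$ and pair the resulting two equations in $L^2(\mathbb{R}^3)$ with $\gamma\nabla^k\varrho$ and $\nabla^k u$ respectively, then add. The linear cross terms $\gamma\bar\rho\int\nabla^k\nabla\varrho\cdot\nabla^k u$ and $\gamma\bar\rho\int\nabla^k\mathrm{div}\,u\cdot\nabla^k\varrho$ cancel after one integration by parts, while the viscous part produces the dissipation $\int\bar\mu|\nabla\nabla^k u|^2+(\bar\mu+\bar\lambda)|\mathrm{div}\,\nabla^k u|^2\,dx$, which is bounded below by $C\|\nabla^{k+1}u\|_{L^2}^2$ by the coercivity \eqref{1coercive}. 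Thus the whole lemma reduces to majorising the five nonlinear contributions $\int\nabla^k(\varrho\,\mathrm{div}\,u)\nabla^k\varrho$, $\int\nabla^k(u\cdot\nabla\varrho)\nabla^k\varrho$, $\int\nabla^k(u\cdot\nabla u)\cdot\nabla^k u$, $\int\nabla^k(f(\varrho)\nabla\varrho)\cdot\nabla^k u$ and $\int\nabla^k\bigl(h(\varrho)(\bar\mu\Delta u+(\bar\mu+\bar\lambda)\nabla\mathrm{div}\,u)\bigr)\cdot\nabla^k u$ by $\delta(\|\nabla^{k+1}\varrho\|_{L^2}^2+\|\nabla^{k+1}u\|_{L^2}^2)$.

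For the first four contributions I would expand $\nabla^k$ by the Leibniz rule; each resulting summand $\nabla^{j}F\cdot\nabla^{k-j}G$ is then handled by H\"older's inequality combined with the Gagliardo--Nirenberg interpolation of Lemma \ref{1interpolation}, placing the lower-order factor in $L^\infty$ (or in $L^6$ via Sobolev embedding) and the higher-order factor in $L^2$, so that the highest-order derivative appearing is exactly $\nabla^{k+1}\varrho$ or $\nabla^{k+1}u$. The low-order norms are always controlled by $\|\varrho\|_{H^{[N/2]+2}}+\|u\|_{H^{[N/2]+2}}\le\delta$ from \eqref{1a priori}, while derivatives of the composite functions $h(\varrho)$ and $f(\varrho)$ are controlled by \eqref{1hf} and, in $L^\infty$, by Lemma \ref{1composition}. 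This produces the required $\delta$ prefactor on each term.

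The genuinely delicate piece is the fifth contribution, because a naive Leibniz expansion of $\nabla^k(h(\varrho)\Delta u)$ yields the endpoint summand $h(\varrho)\nabla^{k+2}u$, and for $k=N-1$ the factor $\nabla^{k+2}u$ exceeds the available regularity. The way around is to treat that endpoint separately by integrating by parts once against $\nabla^k u$:
\[
\int h(\varrho)\,\nabla^k\Delta u\cdot\nabla^k u\,dx = -\int h(\varrho)\,|\nabla^{k+1}u|^2\,dx - \int \nabla h(\varrho)\cdot\nabla^k u\cdot\nabla^{k+1}u\,dx,
\]
and analogously for the $\nabla\mathrm{div}$ piece. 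Since $\|h(\varrho)\|_{L^\infty}\lesssim\|\varrho\|_{L^\infty}\lesssim\delta$ and $\|\nabla h(\varrho)\|_{L^\infty}\lesssim\|\nabla\varrho\|_{L^\infty}\lesssim\delta$ (using \eqref{1hf} and $[N/2]+2\ge 3$), both pieces on the right are bounded by $\delta\|\nabla^{k+1}u\|_{L^2}^2$ after one more Cauchy--Schwarz and a Gagliardo--Nirenberg bound on $\|\nabla^k u\|_{L^2}$. The remaining Leibniz terms with $j\ge 1$ only involve derivatives of $u$ of order at most $k+1$ and are closed by the standard interpolation scheme used for the other four nonlinearities.

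The main obstacle is thus the bookkeeping for this fifth term: one must simultaneously (i) perform the integration by parts that removes the unavailable $(k+2)$-th derivative of $u$, and (ii) for the intermediate Leibniz summands, redistribute derivatives so that whichever of $\varrho$ or $u$ carries fewer than $[N/2]+2$ derivatives is the one placed in $L^\infty$ and hence absorbed in $\delta$; the restriction $k\le N-1$ is precisely what allows both (i) and (ii) to be carried out within $H^N$.
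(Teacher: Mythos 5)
Your plan captures the broad mechanics of the paper's proof --- energy pairing, coercivity, Leibniz plus H\"older plus Gagliardo--Nirenberg, and an integration by parts for the viscous endpoint --- but it misses the one structural move on which the lemma actually hinges, and as written the interpolation scheme does not close.

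For $1\le k\le N-1$, the paper does \emph{not} Leibniz-expand $\nabla^k(\text{nonlinear})$ against $\nabla^k(\cdot)$. It first integrates by parts once more in $x$, rewriting $\int\nabla^k(\text{nonlinear})\cdot\nabla^k\varrho\,dx$ as $\pm\int\nabla^{k-1}(\text{nonlinear})\cdot\nabla^{k+1}\varrho\,dx$, and likewise for the $u$-equation (see \eqref{1E_k_0}). This single step accomplishes two things at once: it makes the $\nabla^{k+2}u$ difficulty in the viscous term disappear automatically, since $\nabla^{k-1}$ applied to $h(\varrho)\nabla^2u$ produces at most $\nabla^{k+1}u$; and, more importantly, it places the paired ``outer'' factor at the \emph{dissipation level} $\nabla^{k+1}$. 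After that, every Leibniz summand already carries one factor of $\|\nabla^{k+1}\varrho\|_{L^2}$ (or $\|\nabla^{k+1}u\|_{L^2}$), and one only has to interpolate the remaining Leibniz factors between the low level $[N/2]+2$ (to extract the $\delta$) and $k+1$ (to make the powers on the $\nabla^{k+1}$-terms add to exactly $2$ for Young), with a carefully chosen intermediate index $\alpha\le [N/2]+2$ as in \eqref{1E_k_1_0}--\eqref{1E_k_5_3}.

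Your direct expansion keeps the pairing factor at $\nabla^k\varrho$ or $\nabla^k u$, one derivative \emph{below} the dissipation level. If you then follow the placement you describe --- lower Leibniz factor in $L^\infty$ or $L^6$, higher Leibniz factor in $L^2$, and (implicitly) the pairing factor in $L^2$ --- a typical summand such as $\int\varrho\,\nabla^{k+1}u\cdot\nabla^k\varrho\,dx\le\|\varrho\|_{L^\infty}\|\nabla^{k+1}u\|_{L^2}\|\nabla^k\varrho\|_{L^2}$ leaves you holding the factor $\|\nabla^k\varrho\|_{L^2}$, which is \emph{not} controlled by $\|\nabla^{k+1}\varrho\|_{L^2}$ on $\mathbb R^3$, and is not $\lesssim\delta$ once $k>[N/2]+2$. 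So the estimate does not reduce to $\delta(\|\nabla^{k+1}\varrho\|_{L^2}^2+\|\nabla^{k+1}u\|_{L^2}^2)$, and your assertion that ``the highest-order derivative appearing is exactly $\nabla^{k+1}\varrho$ or $\nabla^{k+1}u$'' fails for these summands. To close without the preliminary integration by parts one must also push the pairing factor to $L^6$ (gaining one derivative by Sobolev) or interpolate it between $\|\nabla^\alpha\cdot\|_{L^2}$ and $\|\nabla^{k+1}\cdot\|_{L^2}$, simultaneously tuning the H\"older exponents on the other two factors so that the $\delta$-exponents sum to one and the $\nabla^{k+1}$-exponents sum to two. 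That can be done, but it is precisely the delicate power-balancing your proposal skips; the simple $L^\infty/L^2$ placement does not achieve it. The same issue resurfaces in your cross-term estimate after the viscous IBP: $\|\nabla h(\varrho)\|_{L^\infty}\|\nabla^k u\|_{L^2}\|\nabla^{k+1}u\|_{L^2}$ is not $\lesssim\delta\|\nabla^{k+1}u\|_{L^2}^2$ for $k>[N/2]+2$; the correct split is $\|\nabla h(\varrho)\|_{L^3}\|\nabla^k u\|_{L^6}\|\nabla^{k+1}u\|_{L^2}$. The paper's preliminary integration by parts is not bookkeeping --- it is the device that renders all these balances elementary and should be stated as the first step.
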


\begin{proof}
For $k=0$, multiplying $\eqref{1NS2}_{1},\eqref{1NS2}_{2}$ by $\gamma
\varrho ,u$ respectively, summing up and then integrating the resulting over
$\mathbb{R}^{3}$ by parts, by H\"{o}lder's and Sobolev's inequalities and
the fact \eqref{1hf}, we obtain
\begin{equation}
\begin{split}
& \frac{1}{2}\frac{d}{dt}\int_{\mathbb{R}^{3}}\gamma |\varrho
|^{2}+|u|^{2}\,dx+\int_{\mathbb{R}^{3}}\bar{\mu}|\nabla u|^{2}+(\bar{\mu}+%
\bar{\lambda})|\mathrm{div}u|^{2}\,dx \\
& \quad =\int_{\mathbb{R}^{3}}\gamma (-\varrho \mathrm{div}u-u\cdot \nabla
\varrho )\varrho -\left( u\cdot \nabla u+h(\varrho )(\bar{\mu}\Delta u+(\bar{%
\mu}+\bar{\lambda})\nabla \mathrm{div}u)+f(\varrho )\nabla \varrho \right)
\cdot u\,dx \\
& \quad \lesssim \norm{\varrho}_{L^{3}}\norm{\nabla u}_{L^{2}}\norm{\varrho}%
_{L^{6}}+\left( \norm{u}_{L^{3}}\norm{\nabla u}_{L^{2}}+\norm{\varrho}%
_{L^{6}}\norm{\nabla^2 u}_{L^{3}}+\norm{\varrho}_{L^{3}}\norm{\nabla \varrho}%
_{L^{2}}\right) \norm{u}_{L^{6}} \\
& \quad \lesssim \delta \left( \norm{\nabla \varrho}_{L^{2}}^{2}+%
\norm{\nabla u}_{L^{2}}^{2}\right) .
\end{split}
\label{1E_0}
\end{equation}%
By \eqref{1coercive}, we obtain %
\eqref{1E_k} for $k=0$.

Now for $1\le k\le N-1$, applying $\nabla^k$ to $\eqref{1NS2}_1, \eqref{1NS2}%
_2$ and then multiplying the resulting identities by $\gamma\nabla^k\varrho,
\nabla^k u$ respectively, summing up and integrating over $\mathbb{R}^3$, we
obtain
\begin{equation}  \label{1E_k_0}
\begin{split}
&\frac{1}{2}\frac{d}{dt}\int_{\mathbb{R}^3} \gamma|\nabla^k
\varrho|^2+|\nabla^k u|^2\,dx +\int_{\mathbb{R}^3}\bar{\mu}|\nabla^{k+1}
u|^2+(\bar{\mu}+\bar{\lambda})|\nabla^{k}\mathrm{div} u|^2\,dx \\
&\quad=\int_{\mathbb{R}^3}\gamma\nabla^k (-\varrho\mathrm{div}%
u-u\cdot\nabla\varrho)\nabla^k\varrho \\
&\qquad\qquad-\nabla^k\left(u\cdot\nabla u+h(\varrho)(\bar{\mu}\Delta u+(%
\bar{\mu}+\bar{\lambda})\nabla\mathrm{div} u)+f(\varrho)\nabla\varrho\right)%
\cdot\nabla^k u\,dx \\
&\quad=\int_{\mathbb{R}^3}\gamma\nabla^{k-1} (\varrho\mathrm{div}%
u+u\cdot\nabla\varrho)\nabla^{k+1}\varrho \\
&\qquad\qquad+\nabla^{k-1}\left(u\cdot\nabla u+h(\varrho)(\bar{\mu}\Delta u+(%
\bar{\mu}+\bar{\lambda})\nabla\mathrm{div} u)+f(\varrho)\nabla\varrho\right)%
\cdot\nabla^{k+1} u\,dx \\
&\quad:=I_1+I_2+I_3+I_4+I_5.
\end{split}%
\end{equation}

We shall estimate each term in the right hand side of \eqref{1E_k_0}. First,
for the term $I_1$, by H\"older's inequality and the Sobolev interpolation
of Lemma \ref{1interpolation}, we have
\begin{equation}  \label{1E_k_1_0}
\begin{split}
I_1&=\int_{\mathbb{R}^3}\gamma\nabla^{k-1} (\varrho\mathrm{div}%
u)\nabla^{k+1}\varrho\,dx \\
&=\gamma\int_{\mathbb{R}^3}\sum_{0\le \ell\le
k-1}C_{k-1}^\ell\nabla^{k-1-\ell} \varrho\nabla^\ell \mathrm{div}u
\nabla^{k+1}\varrho\,dx \\
&\lesssim \sum_{0\le \ell\le k-1}\norm{\nabla^{k-1-\ell}\varrho}_{L^\infty}%
\norm{\nabla^{\ell+1}u}_{L^2}\norm{ \nabla^{k+1}\varrho}_{L^2} \\
&\lesssim \sum_{0\le \ell\le k-1}\norm{\nabla^{k-1-\ell}\varrho}_{L^\infty}%
\norm{ u}_{L^2}^{1-\frac{\ell+1}{k+1}}\norm{\nabla^{k+1}u}_{L^2}^\frac{\ell+1%
}{k+1}\norm{ \nabla^{k+1}\varrho}_{L^2}.
\end{split}%
\end{equation}
The main idea is that we will carefully adjust the index in the right hand
side of \eqref{1E_k_1_0} so that it can be bounded by the right hand side of %
\eqref{1E_k}. This is the crucial point that helps us close our energy
estimates at each $k$-th level and avoid imposing the smallness of the whole
$H^N$ norm of initial data. To this end, we use Lemma \ref{1interpolation}
to do the interpolation
\begin{equation}  \label{1E_k_1_1}
\norm{\nabla^{k-\ell-1}\varrho}_{L^\infty} \lesssim \norm{\nabla^\alpha%
\varrho}_{L^2}^{\frac{\ell+1}{k+1}}\norm{\nabla^{k+1}\varrho}_{L^2}^{1-\frac{%
\ell+1}{k+1}},
\end{equation}
where $\alpha$ satisfies
\begin{equation}
\begin{split}
&\frac{k-\ell-1}{3}=\left(\frac{\alpha}{3}-\frac{1}{2}\right)\times\frac{%
\ell+1}{k+1}+\left(\frac{k+1}{3}-\frac{1}{2}\right)\times\left(1-\frac{\ell+1%
}{k+1}\right) \\
&\quad \Longrightarrow \alpha=\frac{ k+1 }{2(\ell+1) }\le \frac{k+1}{2}\le %
\left[\frac{N}{2}\right]+1.
\end{split}%
\end{equation}
Hence, plugging \eqref{1E_k_1_1} into \eqref{1E_k_1_0}, together with %
\eqref{1a priori} and Young's inequality, we obtain
\begin{equation}  \label{1E_k_1}
\begin{split}
I_{1}&\lesssim \sum_{0\le \ell\le k-1}\delta\norm{\nabla^{k+1}\varrho}%
_{L^2}^{1-\frac{\ell+1}{k+1}}\norm{\nabla^{k+1}u}_{L^2}^\frac{\ell+1}{k+1}%
\norm{ \nabla^{k+1}\varrho}_{L^2} \\
&\lesssim\delta\left(\norm{\nabla^{k+1}\varrho}_{L^2}^2+\norm{\nabla^{k+1}u}%
_{L^2}^2\right).
\end{split}%
\end{equation}

Similarly, we can bound
\begin{equation}  \label{1E_k_2}
\begin{split}
I_2&=\int_{\mathbb{R}^3}\gamma\nabla^{k-1}
(u\cdot\nabla\varrho)\nabla^{k+1}\varrho \,dx \\
&=\gamma\int_{\mathbb{R}^3}\sum_{0\le \ell\le
k-1}C_{k-1}^\ell\nabla^{k-1-\ell}u\cdot\nabla^\ell \nabla\varrho
\nabla^{k+1}\varrho\,dx \\
&\lesssim \sum_{0\le \ell\le k-1}\norm{\nabla^{k-1-\ell}u}_{L^\infty}\norm{
\nabla^{\ell+1}\varrho}_{L^2}\norm{\nabla^{k+1}\varrho}_{L^2} \\
&\lesssim\delta\left(\norm{\nabla^{k+1}\varrho}_{L^2}^2+\norm{\nabla^{k+1}u}%
_{L^2}^2\right)
\end{split}%
\end{equation}
and
\begin{equation}  \label{1E_k_3}
\begin{split}
I_3&=\int_{\mathbb{R}^3} \nabla^{k-1}\left(u\cdot\nabla u
\right)\cdot\nabla^{k+1} u\,dx \\
&=\gamma\int_{\mathbb{R}^3}\sum_{0\le \ell\le
k-1}C_{k-1}^\ell\nabla^{k-1-\ell} u\cdot\nabla^\ell \nabla u
\nabla^{k+1}\varrho\,dx \\
&\lesssim \sum_{0\le \ell\le k-1}\norm{\nabla^{k-1-\ell}u}_{L^\infty}%
\norm{\nabla^{\ell+1}u}_{L^2}\norm{ \nabla^{k+1}u}_{L^2} \\
&\lesssim \delta \norm{\nabla^{k+1}u}_{L^2}^2.
\end{split}%
\end{equation}

Next, we estimate the term $I_{4}$. First, we notice that
\begin{equation}
\begin{split}
I_{4}& =\int_{\mathbb{R}^{3}}\nabla ^{k-1}\left( h(\varrho )(\bar{\mu}\Delta
u+(\bar{\mu}+\bar{\lambda})\nabla \mathrm{div}u)\right) \cdot \nabla
^{k+1}u\,dx \\
& \approx \int_{\mathbb{R}^{3}}\nabla ^{k-1}\left( h(\varrho )\nabla
^{2}u\right) \cdot \nabla ^{k+1}u\,dx \\
& =\int_{\mathbb{R}^{3}}\sum_{0\leq \ell \leq k-1}C_{k-1}^{\ell }\nabla
^{k-1-\ell }h(\varrho )\nabla ^{\ell }\nabla ^{2}u\nabla ^{k+1}u\,dx \\
& \lesssim \sum_{0\leq \ell \leq k-1}\norm{\nabla^{k-1-\ell} h(\varrho)
\nabla^{\ell+2}u}_{L^{2}}\norm{ \nabla^{k+1}u}_{L^{2}}.
\end{split}
\label{1E_k_4_0}
\end{equation}%
We shall separate the cases in the summation of \eqref{1E_k_4_0}. For $\ell
=k-1$, we have
\begin{equation}
\norm{ h(\varrho) \nabla^{k+1}u}_{L^{2}}\norm{\nabla^{k+1}u}_{L^{2}}\lesssim %
\norm{ h(\varrho)}_{L^{\infty }}\norm{\nabla^{k+1}u}_{L^{2}}%
\norm{\nabla^{k+1}u}_{L^{2}}\lesssim \delta \norm{\nabla^{k+1}u}_{L^{2}}^{2};
\label{1E_k_4_1}
\end{equation}%
for $\ell =k-2$, by H\"{o}lder's and Sobolev's inequalities, we have
\begin{equation}
\norm{ \nabla (h(\varrho)) \nabla^{k}u}_{L^{2}}\norm{ \nabla^{k+1}u}%
_{L^{2}}\lesssim \norm{h'(\varrho)\nabla\varrho}_{L^{3}}\norm{ \nabla^{k}u}%
_{L^{6}}\norm{ \nabla^{k+1}u}_{L^{2}}\lesssim \delta \norm{\nabla^{k+1}u}%
_{L^{2}}^{2};  \label{1E_k_4_2}
\end{equation}%
and for $0\leq \ell \leq k-3$, noticing that $k-\ell \geq 3$, we may then
use Lemma \ref{1composition}, together with Lemma \ref{1interpolation}, to
bound
\begin{equation}
\begin{split}
\norm{\nabla^{k-1-\ell}(h(\varrho))}_{L^{\infty }}& \lesssim %
\norm{\nabla^{k-1-\ell}\varrho}_{L^{2}}^{1/4}\norm{\nabla^{k+1-\ell}\varrho}%
_{L^{2}}^{3/4} \\
& \lesssim \left( \norm{\varrho}_{L^{2}}^{1-\frac{k-1-\ell }{k+1}}%
\norm{\nabla^{k+1}\varrho}_{L^{2}}^{\frac{k-1-\ell }{k+1}}\right)
^{1/4}\left( \norm{\varrho}_{L^{2}}^{1-\frac{k+1-\ell }{k+1}}%
\norm{\nabla^{k+1}\varrho}_{L^{2}}^{\frac{k+1-\ell }{k+1}}\right) ^{3/4} \\
& \lesssim \norm{\varrho}_{L^{2}}^{\frac{2\ell +1}{2(k+1)}}%
\norm{\nabla^{k+1}\varrho}_{L^{2}}^{1-{\frac{2\ell +1}{2(k+1)}}}.
\end{split}
\label{1E_k_4_3}
\end{equation}%
Therefore, by \eqref{1E_k_4_3} and using Lemma \ref{1interpolation} again,
we obtain
\begin{equation}
\begin{split}
& \norm{\nabla^{k-1-\ell} h(\varrho) \nabla^{\ell+2}u}_{L^{2}}\norm{
\nabla^{k+1}u}_{L^{2}} \\
& \quad \lesssim \norm{\nabla^{k-1-\ell} h(\varrho)}_{L^{\infty }}%
\norm{\nabla^{\ell+2}u}_{L^{2}}\norm{\nabla^{k+1}u}_{L^{2}} \\
& \quad \lesssim \norm{\varrho}_{L^{2}}^{\frac{2\ell +1}{2(k+1)}}%
\norm{\nabla^{k+1}\varrho}_{L^{2}}^{1-{\frac{2\ell +1}{2(k+1)}}}%
\norm{\nabla^\alpha u}_{L^{2}}^{1-{\frac{2\ell +1}{2(k+1)}}}%
\norm{\nabla^{k+1}u}_{L^{2}}^{\frac{2\ell +1}{2(k+1)}}\norm{ \nabla^{k+1}u}%
_{L^{2}} \\
& \quad \lesssim \delta \left( \norm{\nabla^{k+1}\varrho}_{L^{2}}^{2}+%
\norm{\nabla^{k+1}u}_{L^{2}}^{2}\right) ,
\end{split}
\label{1E_k_4_5}
\end{equation}%
where we have denoted $\alpha $ by
\begin{equation}
\begin{split}
& \ell +2=\alpha \times \left( 1-{\frac{2\ell +1}{2(k+1)}}\right)
+(k+1)\times \frac{2\ell +1}{2(k+1)} \\
& \quad \Longrightarrow \alpha =\frac{3(k+1)}{2(k-\ell )+1}\leq \frac{3(k+1)%
}{7}\leq \left[ \frac{N}{2}\right] +1\,\text{ since }k-\ell \geq 3.
\end{split}
\label{1E_k_4_4}
\end{equation}%
In light of \eqref{1E_k_4_1}, \eqref{1E_k_4_2} and \eqref{1E_k_4_5}, we find
\begin{equation}
I_{4}\lesssim \delta \left( \norm{\nabla^{k+1}\varrho}_{L^{2}}^{2}+%
\norm{\nabla^{k+1}u}_{L^{2}}^{2}\right) .  \label{1E_k_4}
\end{equation}

Finally, it remains to estimate the last term $I_5$. First, we have
\begin{equation}  \label{1E_k_5_0}
\begin{split}
I_5&=\int_{\mathbb{R}^3} \nabla^{k-1}\left(
f(\varrho)\nabla\varrho\right)\cdot\nabla^{k+1} u\,dx \\
&=\int_{\mathbb{R}^3}\sum_{0\le \ell\le
k-1}C_{k-1}^\ell\nabla^{k-1-\ell}f(\varrho)\nabla^\ell \nabla\varrho
\nabla^{k+1}u\,dx \\
&\lesssim\sum_{0\le \ell\le k-1}\norm{\nabla^{k-1-\ell}
f(\varrho)\nabla^{\ell+1}\varrho}_{L^2}\norm{ \nabla^{k+1}u}_{L^2} .
\end{split}%
\end{equation}
We shall separate the cases in the summation of \eqref{1E_k_5_0}. For $%
\ell=k-1$, we have
\begin{equation}  \label{1E_k_5_1}
\norm{f(\varrho)\nabla^{k}\varrho}_{L^2}\norm{ \nabla^{k+1}u}_{L^2} \lesssim%
\norm{ \varrho}_{L^3}\norm{\nabla^{k}\varrho}_{L^6}\norm{\nabla^{k+1}u}%
_{L^2} \lesssim\delta\left(\norm{\nabla^{k+1}\varrho}_{L^2}^2+%
\norm{\nabla^{k+1}u}_{L^2}^2\right).
\end{equation}
For $0\le\ell\le k-2$, similarly as in \eqref{1E_k_4_3}--\eqref{1E_k_4_4},
by Lemma \ref{1composition} and Lemma \ref{1interpolation}, we may bound
\begin{equation}  \label{1E_k_5_3}
\begin{split}
&\norm{\nabla^{k-1-\ell} f(\varrho)\nabla^{\ell+1}\varrho}_{L^2}\norm{
\nabla^{k+1}u}_{L^2} \\
&\quad \lesssim\delta^{\frac{2\ell+1}{2(k+1)}}\norm{\nabla^{k+1}\varrho}%
_{L^2}^{1-{\frac{2\ell+1}{2(k+1)}}}\norm{\nabla^\alpha\varrho}_{L^2}^{1-{%
\frac{2\ell+1}{2(k+1)}}}\norm{\nabla^{k+1}\varrho}_{L^2}^{\frac{2\ell+1}{%
2(k+1)}}\norm{ \nabla^{k+1}u}_{L^2} \\
&\quad \lesssim\delta \left(\norm{\nabla^{k+1}\varrho}_{L^2}^2+%
\norm{\nabla^{k+1}u}_{L^2}^2\right),
\end{split}%
\end{equation}
where we have denoted $\alpha$ by
\begin{equation}
\begin{split}
&\ell+1=\alpha\times\left(1-{\frac{2\ell+1}{2(k+1)}}\right)+(k+1)\times\frac{%
2\ell+1}{2(k+1)} \\
&\quad \Longrightarrow \alpha=\frac{ k+1 }{2(k-\ell)}+1\le \frac{k+1 }{5}\le %
\left[\frac{N}{2}\right]+1\, \text{ since }k-\ell\ge 2.
\end{split}%
\end{equation}
In light of \eqref{1E_k_5_1} and \eqref{1E_k_5_3}, we find
\begin{equation}  \label{1E_k_5}
I_5 \lesssim \delta\left(\norm{\nabla^{k+1}\varrho}_{L^2}^2+%
\norm{\nabla^{k+1}u}_{L^2}^2\right).
\end{equation}

Summing up the estimates for $I_{1}\sim I_{5}$, $i.e.$, \eqref{1E_k_1}, %
\eqref{1E_k_2}, \eqref{1E_k_3}, \eqref{1E_k_4} and \eqref{1E_k_5}, we deduce %
\eqref{1E_k} for $1\leq k\leq N-1$.
\end{proof}

Next, we derive the second type of energy estimates excluding $\rho $ and $u$
themselves$.$

\begin{lemma}
\label{1Ek+1le} Assume that $0\le k\le N-1$, then we have
\begin{equation}  \label{1E_k+1}
\begin{split}
&\frac{d}{dt}\int_{\mathbb{R}^3} \gamma|\nabla^{k+1}
\varrho|^2+|\nabla^{k+1} u|^2\,dx+C\norm{\nabla^{k+2} u}_{L^2}^2 \\
&\quad\lesssim \delta \left(\norm{\nabla^{k+1}\varrho}_{L^2}^2+%
\norm{\nabla^{k+1}u}_{L^2}^2+\norm{\nabla^{k+2}u}_{L^2}^2\right).
\end{split}%
\end{equation}
\end{lemma}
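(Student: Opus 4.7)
The plan is to mirror the structure of Lemma \ref{1Ekle} but at the derivative level $k+1$: apply $\nabla^{k+1}$ to $\eqref{1NS2}_{1}$ and $\eqref{1NS2}_{2}$, multiply the resulting equations by $\gamma\nabla^{k+1}\varrho$ and $\nabla^{k+1}u$ respectively, sum, and integrate over $\mathbb{R}^3$. The linear cross terms $\int\gamma\bar\rho\,\nabla^{k+1}\diverge u\,\nabla^{k+1}\varrho\,dx$ and $\int\gamma\bar\rho\,\nabla(\nabla^{k+1}\varrho)\cdot\nabla^{k+1}u\,dx$ cancel by one integration by parts, and the viscous dissipation yields, via \eqref{1coercive}, at least $\sigma_{0}\norm{\nabla^{k+2}u}_{L^2}^2$; this furnishes the left-hand side of \eqref{1E_k+1}. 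It remains to control the five nonlinear contributions on the right-hand side.

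The key structural observation, which distinguishes this estimate from a naive copy of Lemma \ref{1Ekle} at level $k+1$, is that the RHS of \eqref{1E_k+1} does not admit any $\norm{\nabla^{k+2}\varrho}_{L^2}$ term, so the two families of nonlinearities must be handled asymmetrically. For the continuity-equation nonlinearities $-\varrho\diverge u$ and $-u\cdot\nabla\varrho$ paired with $\gamma\nabla^{k+1}\varrho$, I will \emph{not} integrate by parts (doing so would produce the forbidden $\nabla^{k+2}\varrho$); instead I retain the form $\int\nabla^{k+1}(\cdot)\,\nabla^{k+1}\varrho\,dx$ and expand by the Leibniz rule. For the momentum-equation nonlinearities $-u\cdot\nabla u$, $-h(\varrho)(\bar\mu\Delta u+(\bar\mu+\bar\lambda)\nabla\diverge u)$, and $-f(\varrho)\nabla\varrho$ paired with $\nabla^{k+1}u$, I integrate by parts once, converting them into $\int\nabla^{k}(\cdot)\,\nabla^{k+2}u\,dx$, where the factor $\nabla^{k+2}u$ is controlled by the dissipation. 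Each resulting Leibniz expansion is then bounded by H\"older, Sobolev embedding, and the interpolation of Lemma \ref{1interpolation}, exactly as in the treatment of $I_1$--$I_5$ of Lemma \ref{1Ekle}; the contributions from $h(\varrho)$ and $f(\varrho)$ additionally invoke the composition estimate of Lemma \ref{1composition}.

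As illustrative extremes, from $\int\nabla^{k+1}(\varrho\diverge u)\,\nabla^{k+1}\varrho\,dx$ the two endpoint Leibniz terms are bounded by $\norm{\varrho}_{L^\infty}\norm{\nabla^{k+2}u}_{L^2}\norm{\nabla^{k+1}\varrho}_{L^2}$ and $\norm{\diverge u}_{L^\infty}\norm{\nabla^{k+1}\varrho}_{L^2}^2$, both absorbed into the RHS of \eqref{1E_k+1} via \eqref{1a priori} and Young's inequality; the endpoint term $\int\nabla^{k+1}u\cdot\nabla\varrho\,\nabla^{k+1}\varrho\,dx$ coming from $-u\cdot\nabla\varrho$ is precisely the source of the $\norm{\nabla^{k+1}u}_{L^2}^2$ contribution on the RHS. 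The main obstacle is the systematic treatment of the intermediate Leibniz terms $\nabla^{k+1-\ell}A\cdot\nabla^{\ell+m}B$ for $1\le \ell\le k$: the interpolation exponents must be tuned so that the factor with the highest derivative count lands on $u$ (matching $\nabla^{k+2}u$ or $\nabla^{k+1}u$), while the remaining $L^\infty$-estimated factor interpolates down to $H^{[N/2]+2}$ and hence contributes a factor of $\delta$ via \eqref{1a priori}. This is the same tuning already performed in \eqref{1E_k_4_3}--\eqref{1E_k_4_4} and \eqref{1E_k_5_3}, and the restriction $0\le k\le N-1$ keeps all interpolation indices admissible, so summing the controlled contributions produces \eqref{1E_k+1}.
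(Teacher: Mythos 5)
Your overall architecture matches the paper's proof exactly: you apply $\nabla^{k+1}$, keep the continuity nonlinearities in the form $\int\nabla^{k+1}(\cdot)\nabla^{k+1}\varrho\,dx$ while integrating the momentum nonlinearities by parts once to produce $\int\nabla^k(\cdot)\cdot\nabla^{k+2}u\,dx$, and the reason you give for this asymmetry (no $\nabla^{k+2}\varrho$ on the admissible right-hand side) is precisely the paper's. The treatment of the $\varrho\,\mathrm{div}u$ nonlinearity, and of the momentum nonlinearities via Lemmas \ref{1interpolation} and \ref{1composition}, is also in line with what the paper does.

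There is, however, a genuine gap in your handling of the transport nonlinearity $-u\cdot\nabla\varrho$. Expanding $\nabla^{k+1}(u\cdot\nabla\varrho)$ by Leibniz produces, at the $\ell=0$ endpoint, the term $u\cdot\nabla^{k+2}\varrho$. You list the \emph{other} endpoint $\nabla^{k+1}u\cdot\nabla\varrho$ as the source of the $\norm{\nabla^{k+1}u}_{L^2}^2$ contribution, but say nothing about $u\cdot\nabla^{k+2}\varrho$; and your stated strategy of tuning interpolation so that ``the factor with the highest derivative count lands on $u$'' cannot help here, since for this term the highest derivative is on $\varrho$ and is of order $k+2$, which is not allowed to appear on the right-hand side of \eqref{1E_k+1}. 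This term needs a separate idea exploiting the antisymmetric transport structure: write $\nabla^{k+1}(u\cdot\nabla\varrho)=u\cdot\nabla\nabla^{k+1}\varrho+[\nabla^{k+1},u]\cdot\nabla\varrho$, note that
\begin{equation*}
\int_{\mathbb{R}^3} u\cdot\nabla\nabla^{k+1}\varrho\,\nabla^{k+1}\varrho\,dx
=\frac{1}{2}\int_{\mathbb{R}^3} u\cdot\nabla\Lvert\nabla^{k+1}\varrho\Rvert^2\,dx
=-\frac{1}{2}\int_{\mathbb{R}^3}\mathrm{div}\,u\,\Lvert\nabla^{k+1}\varrho\Rvert^2\,dx
\lesssim\delta\norm{\nabla^{k+1}\varrho}_{L^2}^2,
\end{equation*}
and then control the commutator $[\nabla^{k+1},u]\cdot\nabla\varrho$ by Lemma \ref{1commutator}. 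This is exactly the paper's decomposition $J_2=J_{21}+J_{22}$ in \eqref{1E_k+1_2_0}--\eqref{1E_k+1_2_2}. Without this device, the pure-Leibniz plan you describe does not close.
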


\begin{proof}
Let $0\le k\le N-1$. Applying $\nabla^{k+1}$ to $\eqref{1NS2}_1, \eqref{1NS2}%
_2$ and multiplying by $\gamma\nabla^{k+1}\varrho$, $\nabla^{k+1} u$
respectively, summing up and then integrating over $\mathbb{R}^3$ by parts,
we obtain
\begin{equation}  \label{1E_k+1_0}
\begin{split}
&\frac{1}{2}\frac{d}{dt}\int_{\mathbb{R}^3} \gamma|\nabla^{k+1}
\varrho|^2+|\nabla^{k+1} u|^2\,dx +\int_{\mathbb{R}^3}\bar{\mu}|\nabla^{k+2}
u|^2+(\bar{\mu}+\bar{\lambda})|\nabla^{k+1}\mathrm{div}u|^2\,dx \\
&\quad=\int_{\mathbb{R}^3}\gamma\nabla^{k+1} (-\varrho\mathrm{div}%
u-u\cdot\nabla\varrho)\nabla^{k+1}\varrho \\
&\qquad\quad\ \, -\nabla^{k+1}\left(u\cdot\nabla u+h(\varrho)(\bar{\mu}%
\Delta u+(\bar{\mu}+\bar{\lambda})\nabla\mathrm{div} u)+f(\varrho)\nabla%
\varrho\right)\cdot\nabla^{k+1} u\,dx \\
&\quad=\int_{\mathbb{R}^3}\gamma\nabla^{k+1} (-\varrho\mathrm{div}%
u-u\cdot\nabla\varrho)\nabla^{k+1}\varrho \\
&\qquad\quad\ \, +\nabla^{k}\left(u\cdot\nabla u+h(\varrho)(\bar{\mu}\Delta
u+(\bar{\mu}+\bar{\lambda})\nabla\mathrm{div} u)+f(\varrho)\nabla\varrho%
\right)\cdot\nabla^{k+2} u\,dx \\
&\quad:=J_1+J_2+J_3+J_4+J_5.
\end{split}%
\end{equation}

We shall estimate each term in the right hand side of \eqref{1E_k+1_0}.
First, we split $J_1$ as:
\begin{equation}  \label{1E_k+1_1_0}
\begin{split}
J_1&=-\gamma\int_{\mathbb{R}^3}\nabla^{k+1} (\varrho\mathrm{div}%
u)\nabla^{k+1}\varrho\,dx \\
&=-\gamma\int_{\mathbb{R}^3} \Big( \varrho \nabla^{k+1}\mathrm{div}u
+C_{k+1}^1\nabla\varrho \nabla^{k}\mathrm{div}u +C_{k+1}^2\nabla^2\varrho
\nabla^{k-1}\mathrm{div}u \\
&\qquad\qquad\quad+\sum_{3\le\ell\le k+1}C_{k+1}^\ell\nabla^\ell\varrho
\nabla^{k+1-\ell}\mathrm{div}u\Big)\nabla^{k+1}\varrho\,dx \\
&:=J_{11}+J_{12}+J_{13}+J_{14}.
\end{split}%
\end{equation}
Hereafter, it it happens to be the case $\ell>k+1$, etc., then it means
nothing. By H\"older's, Sobolev's and Cauchy's inequalities, we obtain
\begin{equation}  \label{1E_k+1_1_1}
\begin{split}
J_{11}\lesssim\norm{\varrho}_{L^\infty}\norm{\nabla^{k+2} u}_{L^2}%
\norm{\nabla^{k+1}\varrho}_{L^2} \lesssim \delta\left (\norm{\nabla^{k+1}%
\varrho}_{L^2}^2+\norm{\nabla^{k+2}u}_{L^2}^2\right)
\end{split}%
\end{equation}
and
\begin{equation}  \label{1E_k+1_1_2}
\begin{split}
J_{12}\lesssim\norm{\nabla\varrho}_{L^3} \norm{\nabla^{k+1}u}_{L^6}%
\norm{\nabla^{k+1}\varrho}_{L^2} \lesssim \delta(\norm{\nabla^{k+1}\varrho}%
_{L^2}^2+\norm{\nabla^{k+2}u}_{L^2}^2),
\end{split}%
\end{equation}
and
\begin{equation}  \label{1E_k+1_1_3}
\begin{split}
J_{13}\lesssim\norm{\nabla^2\varrho}_{L^3} \norm{\nabla^{k}u}_{L^6}%
\norm{\nabla^{k+1}\varrho}_{L^2} \lesssim \delta\left(\norm{\nabla^{k+1}%
\varrho}_{L^2}^2+\norm{\nabla^{k+1}u}_{L^2}^2\right).
\end{split}%
\end{equation}
While for the last term $J_{14}$, noting that now $k+2-\ell\le k-1$, by
H\"older's inequality and Lemma \ref{1interpolation}, we obtain
\begin{equation}  \label{1E_k+1_1_5}
\begin{split}
J_{14}&\lesssim\sum_{2\le\ell\le k+1}\norm{\nabla^\ell\varrho}_{L^2}%
\norm{\nabla^{k+2-\ell}u}_{L^\infty}\norm{\nabla^{k+1}\varrho}_{L^2} \\
&\lesssim\sum_{2\le\ell\le k+1}\norm{\varrho}_{L^2}^{1-\frac{\ell}{k+1}}%
\norm{\nabla^{k+1}\varrho}_{L^2}^\frac{\ell}{k+1} \norm{\nabla^\alpha u}%
_{L^2}^{\frac{\ell}{k+1} }\norm{\nabla^{k+2}u}_{L^2}^{1-\frac{\ell}{k+1}}%
\norm{\nabla^{k+1}\varrho}_{L^2} \\
&\lesssim \delta\left(\norm{\nabla^{k+1}\varrho}_{L^2}^2+\norm{\nabla^{k+2}u}%
_{L^2}^2\right),
\end{split}%
\end{equation}
where we have denoted $\alpha$ by
\begin{equation}
\begin{split}
&\frac{k-\ell+2}{3}=\left(\frac{\alpha}{3}-\frac{1}{2}\right)\times\frac{\ell%
}{k+1} +\left(\frac{k+2}{3}-\frac{1}{2}\right)\times\left(1-\frac{\ell}{k+1}%
\right) \\
&\quad \Longrightarrow \alpha=\frac{ 3(k+1) }{2\ell }+1\le \frac{ k+3 }{2}%
\le \left[\frac{N}{2}\right]+2\, \text{ since }\ell\ge 3.
\end{split}%
\end{equation}
In light of \eqref{1E_k+1_1_1},\ \eqref{1E_k+1_1_2},\ \eqref{1E_k+1_1_3} and %
\eqref{1E_k+1_1_5}, we find
\begin{equation}  \label{1E_k+1_1}
J_{1}\lesssim \delta\left(\norm{\nabla^{k+1}\varrho}_{L^2}^2+%
\norm{\nabla^{k+1}u}_{L^2}^2+\norm{\nabla^{k+2}u}_{L^2}^2\right).
\end{equation}

Next, for the term $J_2$, we utilize the commutator notation \eqref{1commuta}
to rewrite it as
\begin{equation}  \label{1E_k+1_2_0}
\begin{split}
J_2&=-\gamma\int_{\mathbb{R}^3}\nabla^{k+1}
(u\cdot\nabla\varrho)\nabla^{k+1}\varrho\,dx \\
&=-\gamma\int_{\mathbb{R}^3} \left(u\cdot\nabla
\nabla^{k+1}\varrho+[\nabla^{k+1},u]\cdot\nabla\varrho\right)\nabla^{k+1}%
\varrho\,dx \\
&:=J_{21}+J_{22}.
\end{split}%
\end{equation}
By integrating by part, we have
\begin{equation}  \label{1E_k+1_2_1}
\begin{split}
J_{21}&=-\gamma\int_{\mathbb{R}^3} u\cdot\nabla \frac{|\nabla^{k+1}%
\varrho|^2 }{2}\,dx =\frac{\gamma}{2}\int_{\mathbb{R}^3} \mathrm{div}u\, {%
|\nabla^{k+1}\varrho|^2 }\,dx \\
&\lesssim\norm{\nabla u}_{L^\infty}\norm{\nabla^{k+1}\varrho}_{L^2}^2
\lesssim\delta\norm{\nabla^{k+1}\varrho}_{L^2}^2.
\end{split}%
\end{equation}
We use the commutator estimate of Lemma \ref{1commutator} to bound
\begin{equation}  \label{1E_k+1_2_2}
\begin{split}
J_{22} &\lesssim \left(\norm{\nabla u}_{L^\infty}\norm{\nabla^{k}\nabla%
\varrho}_{L^2}+\norm{\nabla^{k+1}u }_{L^2}\norm{\nabla \varrho}%
_{L^\infty}\right)\norm{\nabla^{k+1}\varrho}_{L^2} \\
&\lesssim \delta \left(\norm{\nabla^{k+1}\varrho}_{L^2}^2+%
\norm{\nabla^{k+1}u}_{L^2}^2\right).
\end{split}%
\end{equation}
In light of \eqref{1E_k+1_2_1} and \eqref{1E_k+1_2_2}, we find
\begin{equation}  \label{1E_k+1_2}
J_{2}\lesssim \delta\left(\norm{\nabla^{k+1}\varrho}_{L^2}^2+%
\norm{\nabla^{k+1}u}_{L^2}^2\right).
\end{equation}

Now we estimate the term $J_3$. By H\"older's inequality and Lemma \ref%
{1interpolation}, we have
\begin{equation}  \label{1E_k+1_3}
\begin{split}
J_3&=\int_{\mathbb{R}^3} \nabla^{k}\left(u\cdot\nabla u
\right)\cdot\nabla^{k+2} u\,dx \\
&=\int_{\mathbb{R}^3} \sum_{0\le \ell\le k}C_k^\ell
\left(\nabla^{k-\ell}u\cdot\nabla\nabla^{\ell} u \right) \cdot\nabla^{k+2}
u\,dx \\
&\lesssim \sum_{0\le \ell\le k}\norm{\nabla^{k-\ell} u}_{L^\infty}\norm{
\nabla^{\ell+1} u }_{L^2}\norm{\nabla^{k+2} u}_{L^2} \\
&\lesssim \sum_{0\le \ell\le k} \norm{ \nabla^\alpha u }_{L^2}^{\frac{\ell+1%
}{k+2}}\norm{ \nabla^{k+2} u }_{L^2}^{1-\frac{\ell+1}{k+2}} \norm{ u }%
_{L^2}^{1-\frac{\ell+1}{k+2}}\norm{ \nabla^{k+2} u }_{L^2}^\frac{\ell+1}{k+2}%
\norm{\nabla^{k+2} u}_{L^2} \\
&\lesssim \delta \norm{\nabla^{k+2} u}_{L^2}^2 ,
\end{split}%
\end{equation}
where we have denoted $\alpha$ by
\begin{equation}
\begin{split}
&\frac{k-\ell}{3}=\left(\frac{\alpha}{3}-\frac{1}{2}\right)\times\frac{\ell+1%
}{k+2} +\left(\frac{k+2}{3}-\frac{1}{2}\right)\times\left(1-\frac{\ell+1}{k+2%
}\right) \\
&\quad\Longrightarrow \alpha=\frac{ k+2 }{2(\ell+1) } \le \frac{ k+2 }{2}\le %
\left[\frac{N}{2}\right]+2.
\end{split}%
\end{equation}

Next, we estimate the term $J_4$, and we do the splitting
\begin{equation}  \label{1E_k+1_4_0}
\begin{split}
J_4&=\int_{\mathbb{R}^3} \nabla^{k}\left( h(\varrho)(\bar{\mu}\Delta u+(\bar{%
\mu}+\bar{\lambda})\nabla\mathrm{div} u)\right)\cdot\nabla^{k+2} u\,dx \\
&\approx \int_{\mathbb{R}^3} \nabla^{k}\left(
h(\varrho)\nabla^2u\right)\cdot\nabla^{k+2} u\,dx \\
&=\int_{\mathbb{R}^3} \Big(h(\varrho)\nabla^{k+2}u+C_k^1\nabla(h(\varrho))%
\nabla^{k+1}u+C_k^2\nabla^2(h(\varrho))\nabla^{k}u \\
&\qquad\quad\, +\sum_{3\le \ell\le
k}C_k^\ell\nabla^\ell(h(\varrho))\nabla^{k-\ell+2}u\Big)\cdot\nabla^{k+2}
u\,dx \\
&:=J_{41}+J_{42}+J_{43}+J_{44}.
\end{split}%
\end{equation}
The first three terms can be easily bounded by
\begin{equation}  \label{1E_k+1_4_1}
J_{41}\lesssim \norm{h(\varrho)}_{L^\infty}\norm{\nabla^{k+2}u}_{L^2}^2
\lesssim \delta\norm{\nabla^{k+2}u}_{L^2}^2
\end{equation}
and
\begin{equation}  \label{1E_k+1_4_2}
J_{42}\lesssim\norm{h'(\varrho)}_{L^\infty}\norm{\nabla\varrho}_{L^3}%
\norm{\nabla^{k+1}u}_{L^6}\norm{\nabla^{k+2}u}_{L^2} \lesssim \delta%
\norm{\nabla^{k+2}u}_{L^2}^2,
\end{equation}
and
\begin{equation}  \label{1E_k+1_4_3}
\begin{split}
J_{43}&\lesssim \left(\norm{h''(\varrho)}_{L^\infty}\norm{\nabla\varrho}%
_{L^\infty}\norm{\nabla\varrho}_{L^3}+\norm{h'(\varrho)}_{L^\infty}%
\norm{\nabla^2\varrho}_{L^3}\right)\norm{\nabla^{k}u}_{L^6}%
\norm{\nabla^{k+2}u}_{L^2} \\
&\lesssim \delta\left(\norm{\nabla^{k+1}u}_{L^2}^2+\norm{\nabla^{k+2}u}%
_{L^2}^2\right).
\end{split}%
\end{equation}
The last term $J_{44}$ is much more complicated. We shall split it further
as follows.
\begin{equation}  \label{1E_k+1_4_4}
\begin{split}
J_{44} &\lesssim \sum_{3\le \ell\le k}\int_{\mathbb{R}^3}|\nabla^\ell(h(%
\varrho))||\nabla^{k-\ell+2}u|| \nabla^{k+2} u |\,dx \\
&=\sum_{3\le \ell\le k}\int_{\mathbb{R}^3}|\nabla^{\ell-1}(h^{\prime}(%
\varrho)\nabla\varrho)||\nabla^{k-\ell+2}u|| \nabla^{k+2}u |\,dx \\
&\le \sum_{3\le \ell\le k}\int_{\mathbb{R}^3}\left(\left|h^{\prime}(\varrho)%
\nabla^{\ell}\varrho\right|+\sum_{1\le m\le
\ell-1}\left|\nabla^{m}(h^{\prime}(\varrho))\nabla^{\ell-m}\varrho\right|%
\right)|\nabla^{k-\ell+2}u||\nabla^{k+2} u |\,dx \\
&:=J_{441}+J_{442}.
\end{split}%
\end{equation}
Since $k-\ell+2\le k-1$, we may use H\"older's inequality and Lemma \ref%
{1interpolation} to bound
\begin{equation}  \label{1E_k+1_4_5}
\begin{split}
J_{441}&\lesssim\sum_{3\le \ell\le k}\norm{\nabla^{\ell}\varrho}_{L^2}%
\norm{\nabla^{k-\ell+2}u}_{L^\infty}\norm{ \nabla^{k+2} u}_{L^2} \\
&\lesssim\sum_{3\le \ell\le k}\norm{\varrho}_{L^2}^{1-\frac{\ell}{k+1}} %
\norm{\nabla^{k+1}\varrho}_{L^2}^{\frac{\ell}{k+1}}\norm{\nabla^\alpha u}%
_{L^2}^{\frac{\ell}{k+1}}\norm{\nabla^{k+2}u}_{L^2}^{1-\frac{\ell}{k+1}}%
\norm{ \nabla^{k+2} u}_{L^2} \\
&\lesssim \delta \left(\norm{\nabla^{k+1}\varrho}_{L^2}^2+%
\norm{\nabla^{k+2}u}_{L^2}^2\right),
\end{split}%
\end{equation}
where we have denoted $\alpha$ by
\begin{equation}
\begin{split}
&\frac{{k-\ell+2}}{3} =\left(\frac{\alpha}{3}-\frac{1}{2}\right)\times \frac{%
\ell}{k+1}+\left(\frac{{k+2}}{3}-\frac{1}{2}\right)\times\left(1-\frac{\ell}{%
k+1}\right) \\
&\quad \Longrightarrow \alpha=\frac{3(k+1)}{2\ell}+1 \le \frac{ k+3 }{2}\le %
\left[\frac{N}{2}\right]+2\, \text{ since }\ell\ge 3.
\end{split}%
\end{equation}

For the term $J_{442}$, noting that $1\le m\le \ell-1\le k-1$, we may then
use H\"older's inequality, Lemma \ref{1composition} and Lemma \ref%
{1interpolation} to bound
\begin{equation}  \label{1E_k+1_4_6}
\begin{split}
J_{442} &\lesssim \sum_{3\le \ell\le k} \sum_{1\le m\le \ell-1}%
\norm{\nabla^{m}(h'(\varrho))}_{L^\infty}\norm{\nabla^{\ell-m}\varrho}%
_{L^\infty}\norm{\nabla^{k-\ell+2}u}_{L^2}\norm{\nabla^{k+2} u }_{L^2} \\
&\lesssim\sum_{3\le \ell\le k} \sum_{1\le m\le \ell-1}\norm{\nabla^m\varrho}%
_{L^2} ^{1/4}\norm{\nabla^{m+2}\varrho}_{L^2}^{3/4}\norm{\nabla^{\ell-m}%
\varrho}_{L^2} ^{1/4}\norm{\nabla^{\ell-m+2}\varrho}_{L^2}^{3/4} \\
&\qquad\qquad\qquad\quad\ \times\norm{\nabla^{k-\ell+2}u}_{L^2}\norm{
\nabla^{k+2} u }_{L^2}.
\end{split}%
\end{equation}
To estimate the right hand side of \eqref{1E_k+1_4_6}, we divide it into two
cases.\smallskip\smallskip

\textbf{Case 1}: For $m=1$ or $m=\ell-1$, we have
\begin{equation}  \label{1E_k+1_4_61}
\begin{split}
& \norm{\nabla\varrho}_{L^2} ^{1/4}\norm{\nabla^{3}\varrho}_{L^2} ^{3/4}%
\norm{\nabla^{\ell-1}\varrho}_{L^2} ^{1/4}\norm{\nabla^{\ell +1}\varrho}%
_{L^2} ^{{3}/{4}} \\
&\quad\lesssim \delta \norm{ \varrho}_{L^2}^{\frac{1}{4}\left(1-\frac{\ell-1%
}{k+1}\right)}\norm{\nabla^{k+1}\varrho}_{L^2}^{\frac{1}{4}\left( \frac{%
\ell-1}{k+1}\right)} \norm{ \varrho}_{L^2} ^{\frac{3}{4}\left(1-\frac{\ell+1%
}{k+1}\right)}\norm{\nabla^{k+1}\varrho}_{L^2}^{\frac{3}{4}\left( \frac{%
\ell+1}{k+1}\right)} \\
&\quad \lesssim\delta^{2-\frac{2\ell+1}{2(k+1)}}\norm{\nabla^{k+1}\varrho}%
_{L^2} ^{ \frac{2\ell+1}{2(k+1)}}.
\end{split}%
\end{equation}
Hence, by \eqref{1E_k+1_4_61} and using Lemma \ref{1interpolation}, we have
\begin{equation}  \label{1E_k+1_4_62}
\begin{split}
&\norm{\nabla\varrho}_{L^2}^{1/4}\norm{\nabla^{3}\varrho}_{L^2} ^{3/4}%
\norm{\nabla^{\ell-1}\varrho}_{L^2}^{1/4}\norm{\nabla^{\ell +1}\varrho}%
_{L^2}^{{3}/{4}}\norm{\nabla^{k-\ell+2}u}_{L^2}\norm{\nabla^{k+2} u}_{L^2} \\
&\quad\lesssim\delta^{2-\frac{2\ell+1}{2(k+1)}}\norm{\nabla^{k+1}\varrho}%
_{L^2} ^{ \frac{2\ell+1}{2(k+1)}}\norm{\nabla^\alpha u}_{L^2}^{\frac{2\ell+1%
}{2(k+1)}}\norm{\nabla^{k+1}u}_{L^2}^{1- \frac{2\ell+1}{2(k+1)}}%
\norm{\nabla^{k+2}u}_{L^2} \\
&\quad\lesssim \delta^2\left(\norm{\nabla^{k+1}\varrho}_{L^2}^2+%
\norm{\nabla^{k+2}u}_{L^2}^2\right),
\end{split}%
\end{equation}
where we have denoted $\alpha$ by
\begin{equation}  \label{tttt}
\begin{split}
&{k-\ell+2} =\alpha\times \frac{2\ell+1}{2(k+1)}+({k+1})\times\left(1-{\frac{%
2\ell+1}{2(k+1)}}\right) \\
&\quad \Longrightarrow \alpha=\frac{3(k+1)}{2\ell+1}\le \frac{3(k+1)}{7}\le %
\left[ \frac{N}{2}\right]+1\,\text{ since }\ell\ge 3.
\end{split}%
\end{equation}

\textbf{Case 2}: For $2\le m\le \ell-2$, noting also that $\ell-m\ge 2$,
then we bound
\begin{equation}  \label{1E_k+1_4_63}
\begin{split}
&\norm{\nabla^m\varrho}_{L^2} ^{1/4}\norm{\nabla^{m+2}\varrho}_{L^2}^{3/4}%
\norm{\nabla^{\ell-m}\varrho}_{L^2} ^{1/4}\norm{\nabla^{\ell-m+2}\varrho}%
_{L^2} ^{3/4} \\
&\quad\lesssim\left(\norm{\nabla^2\varrho}_{L^2}^{1-\frac{m-2}{k-1}}%
\norm{\nabla^{k+1}\varrho}_{L^2}^{\frac{m-2}{k-1}}\right)^{1/4}\left(%
\norm{\nabla^2\varrho}_{L^2}^{1-\frac{m}{k-1}}\norm{\nabla^{k+1}\varrho}%
_{L^2}^{\frac{m}{k-1}}\right) ^{3/4} \\
&\quad\quad\ \times\left(\norm{\nabla^2\varrho}_{L^2}^{1-\frac{\ell-m-2}{k-1}%
}\norm{\nabla^{k+1}\varrho}_{L^2}^{\frac{\ell-m-2}{k-1}}\right)^{1/4}\left(%
\norm{\nabla^2\varrho}_{L^2}^{1-\frac{\ell-m}{k-1}}\norm{\nabla^{k+1}\varrho}%
_{L^2}^{\frac{\ell-m}{k-1}}\right) ^{3/4} \\
&\quad\lesssim\norm{\nabla^2\varrho}_{L^2}^{2-\frac{\ell-1}{k-1}}%
\norm{\nabla^{k+1}\varrho}_{L^2}^{\frac{\ell-1}{k-1}}.
\end{split}%
\end{equation}
Hence, by \eqref{1E_k+1_4_63} and using Lemma \ref{1interpolation} again, we
have
\begin{equation}  \label{1E_k+1_4_65}
\begin{split}
&\norm{\nabla^m\varrho}_{L^2} ^{1/4}\norm{\nabla^{m+2}\varrho}_{L^2}^{3/4}%
\norm{\nabla^{\ell-m}\varrho}_{L^2} ^{1/4}\norm{\nabla^{\ell-m+2}\varrho}%
_{L^2}^{3/4}\norm{\nabla^{k-\ell+2}u}_{L^2}\norm{ \nabla^{k+2} u }_{L^2} \\
&\quad\lesssim \delta^{2-\frac{\ell-1}{k-1}} \norm{\nabla^{k+1}\varrho}%
_{L^2}^{\frac{\ell-1}{k-1}}\norm{\nabla^\alpha u}_{L^2}^{ \frac{\ell-1}{k-1}}%
\norm{\nabla^{k+1}u}_{L^2}^{1-\frac{\ell-1}{k-1}}\norm{\nabla^{k+2}u}_{L^2}
\\
&\quad\lesssim \delta^2 \left(\norm{\nabla^{k+1}\varrho}_{L^2}^2+%
\norm{\nabla^{k+1}u}_{L^2}^2+\norm{\nabla^{k+2}u}_{L^2}^2\right),
\end{split}%
\end{equation}
where we have denoted $\alpha$ by
\begin{equation}
\begin{split}
&{k-\ell+2} =\alpha\times \frac{\ell-1}{k-1}+({k+1})\times\left(1-\frac{%
\ell-1}{k-1}\right) \\
&\quad \Longrightarrow \alpha=2.
\end{split}%
\end{equation}

Therefore, we deduce from the two cases above that
\begin{equation}
J_{442}\lesssim \delta^2 \left(\norm{\nabla^{k+1}\varrho}_{L^2}^2+%
\norm{\nabla^{k+1}u}_{L^2}^2+\norm{\nabla^{k+2}u}_{L^2}^2\right),
\end{equation}
and this together with \eqref{1E_k+1_4_5} implies
\begin{equation}  \label{1E_k+1_4_7}
J_{44}\lesssim \delta \left(\norm{\nabla^{k+1}\varrho}_{L^2}^2+%
\norm{\nabla^{k+1}u}_{L^2}^2+\norm{\nabla^{k+2}u}_{L^2}^2\right).
\end{equation}
By the estimates \eqref{1E_k+1_4_1}, \eqref{1E_k+1_4_2}, \eqref{1E_k+1_4_3}
and \eqref{1E_k+1_4_7}, we obtain
\begin{equation}  \label{1E_k+1_4}
J_{4}\lesssim \delta \left(\norm{\nabla^{k+1}\varrho}_{L^2}^2+%
\norm{\nabla^{k+1}u}_{L^2}^2+\norm{\nabla^{k+2}u}_{L^2}^2\right).
\end{equation}

Finally, it remains to estimate the last term $J_{5}$. To begin with, we
split
\begin{equation}
\begin{split}
J_{5}& =\int_{\mathbb{R}^{3}}\nabla ^{k}\left( f(\varrho )\nabla \varrho
\right) \cdot \nabla ^{k+2}u\,dx \\
& =\int_{\mathbb{R}^{3}}\Big(f(\varrho )\nabla ^{k+1}\varrho
+C_{k}^{1}\nabla (f(\varrho ))\nabla ^{k}\varrho +\sum_{2\leq \ell \leq
k}C_{k}^{\ell }\nabla ^{\ell }(f(\varrho ))\nabla ^{k-\ell +1}\varrho \Big)%
\cdot \nabla ^{k+2}u\,dx \\
& :=J_{51}+J_{52}+J_{53}.
\end{split}
\label{1E_k+1_5_0}
\end{equation}%
The first two terms can be easily bounded by
\begin{equation}
J_{51}\lesssim \norm{f(\varrho)}_{L^{\infty }}\norm{\nabla^{k+1}\varrho}%
_{L^{2}}\norm{\nabla^{k+2}u}_{L^{2}}\lesssim \delta \left( %
\norm{\nabla^{k+1}\varrho}_{L^{2}}^{2}+\norm{\nabla^{k+2}u}%
_{L^{2}}^{2}\right)   \label{1E_k+1_5_1}
\end{equation}%
and
\begin{equation}
J_{52}\lesssim \norm{\nabla\varrho}_{L^{3}}\norm{\nabla^{k}\varrho}_{L^{6}}%
\norm{\nabla^{k+2}u}_{L^{2}}\lesssim \delta \left( \norm{\nabla^{k+1}\varrho}%
_{L^{2}}^{2}+\norm{\nabla^{k+2}u}_{L^{2}}^{2}\right) .  \label{1E_k+1_5_2}
\end{equation}

We now focus on the most delicate term $J_{53}$. We shall split it further
as follows.
\begin{equation}  \label{1E_k+1_5_3}
\begin{split}
J_{53} &\lesssim \sum_{2\le \ell\le k}\int_{\mathbb{R}^3}|\nabla^\ell(f(%
\varrho))||\nabla^{k-\ell+1}\varrho|| \nabla^{k+2} u |\,dx \\
&=\sum_{2\le \ell\le k}\int_{\mathbb{R}^3}|\nabla^{\ell-1}(f^{\prime}(%
\varrho)\nabla\varrho)||\nabla^{k-\ell+1}\varrho||\nabla^{k+2} u |\,dx \\
&\le \int_{\mathbb{R}^3} \left(|f^{\prime}(\varrho)\nabla^2\varrho|+
|f^{\prime\prime}(\varrho)\nabla\varrho \nabla
\varrho|\right)|\nabla^{k-1}\varrho|| \nabla^{k+2} u |\,dx \\
&\quad + \sum_{3\le \ell\le k}\int_{\mathbb{R}^3}\left(|f^{\prime}(\varrho)%
\nabla^{\ell}\varrho|+\sum_{1\le m\le
\ell-1}|\nabla^{m}(f^{\prime}(\varrho))\nabla^{\ell-m}\varrho|\right)|%
\nabla^{k-\ell+1}\varrho||\nabla^{k+2} u |\,dx \\
&:=J_{531}+J_{532}+J_{533}.
\end{split}%
\end{equation}
By H\"older's inequality, Lemma \ref{1interpolation}, we estimate $J_{531}$
by
\begin{equation}  \label{1E_k+1_5_4}
\begin{split}
J_{531}&\lesssim\left(\norm{\nabla^2\varrho}_{L^2}+\norm{\nabla\varrho}_{L^3}%
\norm{\nabla\varrho}_{L^6}\right)\norm{\nabla^{k-1}\varrho}_{L^\infty}\norm{
\nabla^{k+2}u}_{L^2} \\
&\lesssim\norm{\nabla^2\varrho}_{L^2}\norm{\varrho}_{L^2}^{\frac{\frac{1}{2}%
}{k+1}}\norm{\nabla^{k+1}\varrho}_{L^2}^{\frac{k+\frac{1}{2}}{k+1}}\norm{
\nabla^{k+2}u}_{L^2} \\
&\lesssim\norm{\nabla^2\varrho}_{L^2}^\frac{3}{4}\norm{\varrho}_{L^2}^{\frac{%
1}{4}-\frac{\frac{1}{2}}{k+1}}\norm{\nabla^{k+1}\varrho}_{L^2}^{\frac{\frac{1%
}{2}}{k+1}}\norm{\varrho}_{L^2}^{\frac{\frac{1}{2}}{k+1}}\norm{\nabla^{k+1}%
\varrho}_{L^2}^{\frac{k+\frac{1}{2}}{k+1}}\norm{ \nabla^{k+2}u}_{L^2} \\
&\lesssim\delta\left(\norm{\nabla^{k+1}\varrho}_{L^2}^2+\norm{\nabla^{k+2}u}%
_{L^2}^2\right).
\end{split}%
\end{equation}
Since $\ell\ge 3$, $k-\ell+1\le k-2$, we may use H\"older's inequality and
Lemma \ref{1interpolation} to bound $J_{532}$ by
\begin{equation}  \label{1E_k+1_5_5}
\begin{split}
J_{532} &\lesssim\norm{\nabla^{\ell}\varrho}_{L^2}\norm{\nabla^{k-\ell+1}%
\varrho}_{L^\infty}\norm{\nabla^{k+2} u}_{L^2} \\
&\lesssim\norm{\varrho}_{L^2}^{1-\frac{\ell}{k+1}}\norm{\nabla^{k+1}\varrho}%
_{L^2}^{\frac{\ell}{k+1}}\norm{\nabla^\alpha \varrho}_{L^2}^{\frac{\ell}{k+1}%
}\norm{\nabla^{k+1}\varrho}_{L^2}^{1-\frac{\ell}{k+1}}\norm{ \nabla^{k+2} u}%
_{L^2} \\
&\lesssim \delta \left(\norm{\nabla^{k+1}\varrho}_{L^2}^2+%
\norm{\nabla^{k+2}u}_{L^2}^2\right),
\end{split}%
\end{equation}
where we have denoted $\alpha$ by
\begin{equation}
\begin{split}
&\frac{{k-\ell+1}}{3} =\left(\frac{\alpha}{3}-\frac{1}{2}\right)\times\frac{%
\ell}{k+1}+\left(\frac{{k+1}}{3}-\frac{1}{2}\right)\times(1-\frac{\ell}{k+1})
\\
&\quad \Longrightarrow \alpha=\frac{3(k+1)}{2\ell}\le \frac{k+1}{2} \le %
\left[\frac{N}{2}\right]+1\,\text{ since }\ell\ge 3.
\end{split}%
\end{equation}

For the term $J_{533}$, noticing that $1\le m\le \ell-1\le k-1$, we may then
use H\"older's inequality, Lemma \ref{1composition} and Lemma \ref%
{1interpolation} to bound
\begin{equation}  \label{1E_k+1_5_6}
\begin{split}
J_{533} &\lesssim \sum_{3\le \ell\le k} \sum_{1\le m\le \ell-1}%
\norm{\nabla^{m}(h'(\varrho))}_{L^\infty}\norm{\nabla^{\ell-m}\varrho}%
_{L^\infty}\norm{\nabla^{k-\ell+1}\varrho}_{L^2}\norm{\nabla^{k+2} u }_{L^2}
\\
&\lesssim\sum_{3\le \ell\le k} \sum_{1\le m\le \ell-1}\norm{\nabla^m\varrho}%
_{L^2} ^{1/4}\norm{\nabla^{m+2}\varrho}_{L^2}^{3/4}\norm{\nabla^{\ell-m}%
\varrho}_{L^2} ^{1/4}\norm{\nabla^{\ell-m+2}\varrho}_{L^2}^{3/4} \\
&\qquad\qquad\qquad\quad\ \times\norm{\nabla^{k-\ell+1}\varrho}_{L^2}\norm{
\nabla^{k+2} u }_{L^2}.
\end{split}%
\end{equation}
To estimate the right hand side of \eqref{1E_k+1_5_6}, we divide it into two
cases.\smallskip\smallskip

\textbf{Case 1}: For $m=1$ or $m=\ell-1$, similarly as in \eqref{1E_k+1_4_61}%
--\eqref{tttt}, by Lemma \ref{1interpolation} we have
\begin{equation}  \label{1E_k+1_5_7}
\begin{split}
&\norm{\nabla\varrho}_{L^2} ^{1/4}\norm{\nabla^{3}\varrho}_{L^2}^{3/4}%
\norm{\nabla^{\ell-1}\varrho}_{L^2} ^{1/4}\norm{\nabla^{\ell +1}\varrho}%
_{L^2}^{{3}/{4}}\norm{\nabla^{k-\ell+1}\varrho}_{L^2}\norm{\nabla^{k+2} u}%
_{L^2} \\
&\quad\lesssim\delta^{2-\frac{2\ell+1}{2(k+1)}}\norm{\nabla^{k+1}\varrho}%
_{L^2} ^{ \frac{2\ell+1}{2(k+1)}}\norm{\nabla^\alpha \varrho}_{L^2}^{\frac{%
2\ell+1}{2(k+1)}}\norm{\nabla^{k+1}\varrho}_{L^2}^{1-\frac{2\ell+1}{2(k+1)}}%
\norm{\nabla^{k+2}u}_{L^2} \\
&\quad\lesssim \delta^2 \left(\norm{\nabla^{k+1}\varrho}_{L^2}^2+%
\norm{\nabla^{k+2}u}_{L^2}^2\right),
\end{split}%
\end{equation}
where we have denoted $\alpha$ by
\begin{equation}
\begin{split}
&{k-\ell+1} =\alpha\times \frac{2\ell+1}{2(k+1)}+({k+1})\times\left(1- \frac{%
2\ell+1}{2(k+1)}\right) \\
&\quad\Longrightarrow \alpha=\frac{k+1}{2\ell+1}\le \frac{k+1}{7}\le \left[
\frac{N}{7}\right]+1\,\text{ since }\ell\ge 3.
\end{split}%
\end{equation}

\textbf{Case 2}: For $2\le m\le \ell-2$, noting also that $\ell-m\ge 2$, as
in \eqref{1E_k+1_4_63}, we may bound
\begin{equation}  \label{1E_k+1_5_8}
\norm{\nabla^m\varrho}_{L^2}^{1/4}\norm{\nabla^{m+2}\varrho}_{L^2} ^{3/4}%
\norm{\nabla^{\ell-m}\varrho}_{L^2}^{1/4}\norm{\nabla^{\ell-m+2}\varrho}%
_{L^2} ^{3/4} \lesssim \norm{\nabla^2\varrho}^{2-\frac{\ell-1}{k-1}} %
\norm{\nabla^{k+1}\varrho}_{L^2}^{\frac{\ell-1}{k-1}}.
\end{equation}
Notice that we may not simply let $\norm{\nabla^2\varrho}\lesssim \delta$ as
in \eqref{1E_k+1_4_65} since it would be out of reach for some $\ell$. We
will adjust the index as follows, by Lemma \ref{1interpolation},
\begin{equation}  \label{1E_k+1_5_9}
\begin{split}
\norm{\nabla^2\varrho}^{2-\frac{\ell-1}{k-1}}\norm{\nabla^{k+1}\varrho}%
_{L^2}^{\frac{\ell-1}{k-1}} &\lesssim \delta^{2-\frac{\ell-1}{k-1}-\frac{k+1%
}{2(k-1)}}\norm{\nabla^2\varrho}_{L^2}^\frac{k+1}{2(k-1)} %
\norm{\nabla^{k+1}\varrho}_{L^2}^{\frac{\ell-1}{k-1}} \\
&\lesssim \delta^{2-\frac{\ell-1}{k-1}-\frac{k+1}{2(k-1)}}\left(%
\norm{\varrho}_{L^2}^{1-\frac{2}{k+1}} \norm{\nabla^{k+1}\varrho}_{L^2}^{%
\frac{2}{k+1}}\right)^\frac{k+1}{2(k-1)}\norm{\nabla^{k+1}\varrho}_{L^2}^{%
\frac{\ell-1}{k-1}} \\
&\lesssim \delta^{2-\frac{\ell}{k-1}}\norm{\nabla^{k+1}\varrho}_{L^2}^{\frac{%
\ell}{k-1}}.
\end{split}%
\end{equation}
Hence, by \eqref{1E_k+1_5_8}--\eqref{1E_k+1_5_9} and using Lemma \ref%
{1interpolation} again, we have
\begin{equation}
\begin{split}
&\norm{\nabla^m\varrho}_{L^2} ^{1/4}\norm{\nabla^{m+2}\varrho}_{L^2}^{3/4}%
\norm{\nabla^{\ell-m}\varrho}_{L^2} ^{1/4}\norm{\nabla^{\ell-m+2}\varrho}%
_{L^2}^{3/4}\norm{\nabla^{k-\ell+1}\varrho}_{L^2}\norm{ \nabla^{k+2} u}_{L^2}
\\
&\quad\lesssim\delta^{2-\frac{\ell}{k-1}} \norm{\nabla^{k+1}\varrho}_{L^2}^{%
\frac{\ell}{k-1}}\norm{\nabla^\alpha\varrho}_{L^2}^{\frac{\ell}{k-1}}%
\norm{\nabla^{k+1}\varrho}_{L^2}^{1-\frac{\ell}{k-1}}\norm{\nabla^{k+2} u}%
_{L^2} \\
&\quad\lesssim\delta^2 \left(\norm{\nabla^{k+1}\varrho}_{L^2}^2+%
\norm{\nabla^{k+2}u}_{L^2}^2\right),
\end{split}%
\end{equation}
where we have denoted $\alpha$ by
\begin{equation}
\begin{split}
&{k-\ell+1} =\alpha\times \frac{\ell}{k-1}+({k+1})\times\left(1-\frac{\ell}{%
k-1}\right) \\
&\quad \Longrightarrow \alpha=2.
\end{split}%
\end{equation}

Therefore, we deduce from the two cases above that
\begin{equation}
J_{533}\lesssim \delta ^{2}\left( \norm{\nabla^{k+1}\varrho}_{L^{2}}^{2}+%
\norm{\nabla^{k+2}u}_{L^{2}}^{2}\right) ,  \label{1E_k+1_5_10}
\end{equation}%
and this together with \eqref{1E_k+1_5_4} and \eqref{1E_k+1_5_5} implies
\begin{equation}
J_{53}\lesssim \delta \left( \norm{\nabla^{k+1}\varrho}_{L^{2}}^{2}+%
\norm{\nabla^{k+2}u}_{L^{2}}^{2}\right) .  \label{1E_k+1_5_11}
\end{equation}%
By the estimates \eqref{1E_k+1_5_1}, \eqref{1E_k+1_5_2} and %
\eqref{1E_k+1_5_11}, we obtain
\begin{equation}
J_{5}\lesssim \delta \left( \norm{\nabla^{k+1}\varrho}_{L^{2}}^{2}+%
\norm{\nabla^{k+1}u}_{L^{2}}^{2}+\norm{\nabla^{k+2}u}_{L^{2}}^{2}\right) .
\label{1E_k+1_5}
\end{equation}

Summing up the estimates for $J_{1}\sim J_{5}$, $i.e.$, \eqref{1E_k+1_1}, %
\eqref{1E_k+1_2}, \eqref{1E_k+1_3}, \eqref{1E_k+1_4} and \eqref{1E_k+1_5},
by \eqref{1coercive}, we deduce \eqref{1E_k+1} for $0\leq k\leq N-1$.
\end{proof}

Now, we will use the equations \eqref{1NS2} to recover the dissipation
estimate for $\varrho$.

\begin{lemma}
\label{1EkEk+1le} Assume that $0\le k\le N-1$, then we have
\begin{equation}  \label{1E_kE_k+1}
\frac{d}{dt}\int_{\mathbb{R}^3} \nabla^ku\cdot\nabla\nabla^k\varrho\,dx +C%
\norm{\nabla^{k+1}\varrho}_{L^2}^2 \lesssim\norm{\nabla^{k+1}u}_{L^2}^2+%
\norm{\nabla^{k+2}u}_{L^2}^2.
\end{equation}
\end{lemma}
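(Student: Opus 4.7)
The plan is to construct the interactive energy functional by testing the momentum equation against $\nabla \nabla^k \varrho$, which is the only way the $\gamma\bar{\rho}\nabla\varrho$ term in $\eqref{1NS2}_2$ can produce the missing dissipation of $\varrho$.

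First I would apply $\nabla^k$ to the momentum equation $\eqref{1NS2}_2$, take the $L^2$ inner product with $\nabla\nabla^k\varrho$, and integrate by parts. The crucial linear term
\[
\gamma\bar{\rho}\int_{\mathbb{R}^3}\nabla\nabla^k\varrho\cdot\nabla\nabla^k\varrho\,dx = \gamma\bar{\rho}\norm{\nabla^{k+1}\varrho}_{L^2}^2
\]
produces the desired positive dissipation. The time derivative term is reorganized via
\[
\int_{\mathbb{R}^3}\partial_t\nabla^k u\cdot\nabla\nabla^k\varrho\,dx = \frac{d}{dt}\int_{\mathbb{R}^3}\nabla^k u\cdot\nabla\nabla^k\varrho\,dx - \int_{\mathbb{R}^3}\nabla^k u\cdot\nabla\partial_t\nabla^k\varrho\,dx,
\]
and in the last integral I would substitute $\partial_t\nabla^k\varrho = -\bar{\rho}\nabla^k\diverge u + \nabla^k(-\varrho\diverge u - u\cdot\nabla\varrho)$ obtained from $\eqref{1NS2}_1$. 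Integrating by parts once more in the linear piece yields $\bar{\rho}\norm{\nabla^k\diverge u}_{L^2}^2 \lesssim \norm{\nabla^{k+1}u}_{L^2}^2$, which fits the right-hand side of \eqref{1E_kE_k+1}.

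Next, the linear viscous contributions $\int\bar{\mu}\Delta\nabla^k u\cdot\nabla\nabla^k\varrho$ and $\int(\bar{\mu}+\bar{\lambda})\nabla\diverge\nabla^k u\cdot\nabla\nabla^k\varrho$ are controlled by $\norm{\nabla^{k+2}u}_{L^2}\norm{\nabla^{k+1}\varrho}_{L^2}$, and Cauchy's inequality with a small parameter splits them into $\varepsilon\norm{\nabla^{k+1}\varrho}_{L^2}^2 + C_\varepsilon\norm{\nabla^{k+2}u}_{L^2}^2$; the first piece is absorbed into the dissipation on the left. This produces the clean linear estimate matching \eqref{1E_kE_k+1}.

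The main obstacle will be the nonlinear terms coming from both $\nabla^k\mathfrak{N}_2\cdot\nabla\nabla^k\varrho$ (involving $u\cdot\nabla u$, $h(\varrho)\nabla^2 u$, and $f(\varrho)\nabla\varrho$) and from $\nabla^k u\cdot\nabla\nabla^k(\varrho\diverge u + u\cdot\nabla\varrho)$. These must all be bounded by $\delta\bigl(\norm{\nabla^{k+1}\varrho}_{L^2}^2 + \norm{\nabla^{k+1}u}_{L^2}^2 + \norm{\nabla^{k+2}u}_{L^2}^2\bigr)$ so that the $\delta$-small pieces touching the $\varrho$-dissipation can be absorbed and the $u$-dissipation pieces fit the right-hand side. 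The technique is identical to the one carried out in Lemmas \ref{1Ekle} and \ref{1Ek+1le}: distribute derivatives via Leibniz, separate the endpoint indices $\ell = k, k-1$ from the interior ones, apply Lemma \ref{1composition} to compositions $h(\varrho), f(\varrho)$, and use the Gagliardo--Nirenberg interpolation of Lemma \ref{1interpolation} to land the interpolated exponents in the $H^{[N/2]+2}$ norm controlled by the smallness assumption \eqref{1a priori}. The key trick, as before, is to choose the interpolation index $\alpha$ so that $\alpha \le [N/2]+2$ for every value of $\ell$ appearing in the sum, exploiting the condition $k \le N-1$. Once all nonlinear contributions are reduced to $\delta(\cdots)$ form, choosing $\delta$ small enough and absorbing a small multiple of $\norm{\nabla^{k+1}\varrho}_{L^2}^2$ on the left yields \eqref{1E_kE_k+1}.
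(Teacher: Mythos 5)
Your proposal is correct and follows essentially the same route as the paper: test $\nabla^k\eqref{1NS2}_2$ against $\nabla\nabla^k\varrho$, integrate by parts in time, substitute the continuity equation to convert $\partial_t\nabla^k\varrho$ into $-\bar\rho\nabla^k\diverge u$ plus lower-order nonlinearities, and absorb the small $\varrho$-contributions via Cauchy and the interpolation machinery of Lemmas \ref{1composition} and \ref{1interpolation}. The only (presentational) difference is that the paper does not re-derive the nonlinear bounds from scratch but simply quotes the already-established estimates for $I_1,I_2$ from Lemma \ref{1Ekle} and for $J_3,J_4,J_5$ from Lemma \ref{1Ek+1le}, after one further integration by parts to lower the derivative count on the nonlinear factor.
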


\begin{proof}
Let $0\le k\le N-1$. Applying $\nabla^k$ to $\eqref{1NS2}_2$ and then
multiplying by $\nabla\nabla^k\varrho$, we obtain
\begin{equation}  \label{1E_kE_k+1_0}
\begin{split}
&\gamma\bar{\rho} \int_{\mathbb{R}^3} |\nabla^{k+1}\varrho|^2\,dx \le -\int_{%
\mathbb{R}^3} \nabla^k \partial_tu \cdot\nabla\nabla^k\varrho\,dx+C%
\norm{\nabla^{k+2} u}_{L^2} \norm{\nabla^{k+1} \varrho}_{L^2} \\
&\qquad\qquad\qquad\ +\norm{\nabla^{k}\left(u\cdot\nabla
u+h(\varrho)(\bar{\mu}\Delta u+(\bar{\mu}+\bar{\lambda})\nabla{\rm div}
u)+f(\varrho)\nabla\varrho\right)}_{L^2}\norm{\nabla^{k+1} \varrho}_{L^2}.
\end{split}%
\end{equation}

The delicate first term in the right hand side of \eqref{1E_kE_k+1_0}
involves $\nabla^{k}\partial_t u$, and the key idea is to integrate by parts
in the $t$-variable and use the continuity equation. Thus integrating by
parts for both the $t$- and $x$-variables, we obtain
\begin{equation}  \label{1E_kE_k+1_1}
\begin{split}
&-\int_{\mathbb{R}^3} \nabla^{k} \partial_tu\cdot\nabla\nabla^k\varrho\,dx \\
&\quad=-\frac{d}{dt}\int_{\mathbb{R}^3}
\nabla^{k}u\cdot\nabla\nabla^k\varrho\,dx-\int_{\mathbb{R}^3} \nabla^{k}
\mathrm{div}u\cdot\nabla^{k}\partial_t\varrho\,dx \\
&\quad=-\frac{d}{dt}\int_{\mathbb{R}^3}
\nabla^{k}u\cdot\nabla\nabla^k\varrho\,dx+\bar{\rho}\norm{\nabla^{k} {\rm
div}u}_{L^2}^2+\int_{\mathbb{R}^3}\nabla^{k} \mathrm{div}u\cdot\nabla^{k}(%
\varrho\mathrm{div}u+u\cdot\nabla\varrho)\,dx.
\end{split}%
\end{equation}
For $k=0$, we easily bound the last term in \eqref{1E_kE_k+1_1} by
\begin{equation}  \label{1E_kE_k+1_2}
-\int_{\mathbb{R}^3}\mathrm{div}u\cdot (\varrho\mathrm{div}%
u+u\cdot\nabla\varrho)\,dx \lesssim \delta\left(\norm{\nabla \varrho}%
_{L^2}^2+\norm{\nabla u}_{L^2}^2\right).
\end{equation}
For $1\le k\le N-1$, we integrate by parts to have
\begin{equation}  \label{1E_kE_k+1_3}
\int_{\mathbb{R}^3} \nabla^{k} \mathrm{div}u\cdot\nabla^{k}(\varrho\mathrm{%
div}u+u\cdot\nabla\varrho)\,dx=-\int_{\mathbb{R}^3} \nabla^{k+1} \mathrm{div}%
u\cdot\nabla^{k-1}(\varrho\mathrm{div}u+u\cdot\nabla\varrho)\,dx.
\end{equation}
Recalling from the derivations of the estimates of $I_1$ and $I_2$ in Lemma %
\ref{1Ekle}, we have already proved that
\begin{equation}  \label{1E_kE_k+1_4}
\norm{\nabla^{k-1}\left(\varrho{\rm div}u+u\cdot\nabla\varrho\right)}%
_{L^2}\lesssim \delta\left(\norm{\nabla^{k+1}\varrho}_{L^2}+%
\norm{\nabla^{k+1}u}_{L^2}\right).
\end{equation}
Thus, in view of \eqref{1E_kE_k+1_1}--\eqref{1E_kE_k+1_4}, together with
Cauchy's inequality, we obtain
\begin{equation}  \label{1E_kE_k+1_5}
\begin{split}
&-\int_{\mathbb{R}^3} \nabla^{k} u_t\cdot\nabla\nabla^k\varrho\,dx \\
&\quad\le-\frac{d}{dt}\int_{\mathbb{R}^3} \nabla^{k}
u\cdot\nabla\nabla^k\varrho\,dx+C\left(\norm{\nabla^{k+1} u}_{L^2}^2+%
\norm{\nabla^{k+2} u}_{L^2}^2\right)+ \delta \norm{\nabla^{k+1}\varrho}%
_{L^2}^2.
\end{split}%
\end{equation}

On the other hand, recalling the derivations of the estimates of $J_3$, $J_4$
and $J_5$ in Lemma \ref{1Ek+1le}, we have already proved that
\begin{equation}  \label{1E_kE_k+1_6}
\begin{split}
&\norm{\nabla^{k}\left(u\cdot\nabla u+h(\varrho)(\bar{\mu}\Delta
u+(\bar{\mu}+\bar{\lambda})\nabla{\rm div} u)+f(\varrho)\nabla\varrho\right)}%
_{L^2} \\
&\quad\lesssim \delta \left(\norm{\nabla^{k+1}\varrho}_{L^2}+%
\norm{\nabla^{k+1}u}_{L^2}+\norm{\nabla^{k+2}u}_{L^2}\right).
\end{split}%
\end{equation}
Plugging the estimates \eqref{1E_kE_k+1_5}--\eqref{1E_kE_k+1_6} into %
\eqref{1E_kE_k+1_0}, by Cauchy's inequality, since $\delta$ is small, we
then obtain \eqref{1E_kE_k+1}.
\end{proof}


\subsection{Energy evolution of negative Sobolev norms}

\label{sec 3.2} 

In this subsection, we will derive the evolution of the negative Sobolev
norms of the solution. In order to estimate the nonlinear terms, we need to
restrict ourselves to that $s\in (0,3/2)$. We will establish the following
lemma.

\begin{lemma}
\label{1Esle} For $s\in(0,1/2]$, we have
\begin{equation}  \label{1E_s}
\frac{d}{dt}\int_{\mathbb{R}^3} \gamma|\Lambda^{-s} \varrho|^2+|\Lambda^{-s}
u|^2\,dx +C\norm{\nabla\Lambda^{-s} u}_{L^2}^2 \lesssim \norm{
\nabla\varrho,\nabla u}_{H^1}^2 \left(\norm{\Lambda^{-s}\varrho}_{L^2}+%
\norm{\Lambda^{-s}u}_{L^2}\right);
\end{equation}
and for $s\in(1/2,3/2)$, we have
\begin{equation}  \label{1E_s2}
\begin{split}
&\frac{d}{dt}\int_{\mathbb{R}^3} \gamma|\Lambda^{-s}\varrho|^2+|\Lambda^{-s}
u|^2\,dx +C\norm{\nabla\Lambda^{-s} u}_{L^2}^2 \\
&\quad\lesssim \norm{ (\varrho, u)}_{L^2}^{s-1/2}\norm{(\nabla\varrho,
\nabla u)}_{H^1}^{5/2-s}\left(\norm{\Lambda^{-s}\varrho}_{L^2}+%
\norm{\Lambda^{-s}u}_{L^2}\right).
\end{split}%
\end{equation}
\end{lemma}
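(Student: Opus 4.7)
The plan is to mimic the standard linear energy identity \eqref{1energy identity} at the $\dot H^{-s}$ level, and then to control the resulting nonlinear integrals by combining the Hardy--Littlewood--Sobolev inequality (Lemma \ref{1Riesz}) with Gagliardo--Nirenberg interpolation (Lemma \ref{1interpolation}). Concretely, I first apply $\Lambda^{-s}$ to $\eqref{1NS2}_1$ and $\eqref{1NS2}_2$, multiply by $\gamma\Lambda^{-s}\varrho$ and $\Lambda^{-s}u$ respectively, sum, and integrate over $\mathbb{R}^3$. The two linear cross terms coming from $\bar\rho\,\mathrm{div}\,u$ and $\gamma\bar\rho\nabla\varrho$ cancel after one integration by parts, and the viscous part produces the dissipation
$$\int_{\mathbb{R}^3}\bar\mu|\nabla\Lambda^{-s}u|^2+(\bar\mu+\bar\lambda)|\mathrm{div}\,\Lambda^{-s}u|^2\,dx\ge\sigma_0\norm{\nabla\Lambda^{-s}u}_{L^2}^2$$
by \eqref{1coercive}, which becomes the $C\norm{\nabla\Lambda^{-s}u}_{L^2}^2$ term on the left-hand sides of \eqref{1E_s}--\eqref{1E_s2}.

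What remains on the right are five nonlinear integrals, one for each product $N\in\{\varrho\,\mathrm{div}\,u,\;u\cdot\nabla\varrho,\;u\cdot\nabla u,\;h(\varrho)\nabla^2u,\;f(\varrho)\nabla\varrho\}$. For each I use Cauchy--Schwarz followed by Lemma \ref{1Riesz} to reduce the $\Lambda^{-s}$ estimate to
$$\norm{\Lambda^{-s}N}_{L^2}\lesssim\norm{N}_{L^p},\qquad\tfrac{1}{p}=\tfrac{1}{2}+\tfrac{s}{3},$$
where the HLS hypothesis $p>1$ is precisely the constraint $s<3/2$. I then write $N=A\cdot B$ and apply H\"{o}lder with $B\in\{\mathrm{div}\,u,\nabla\varrho,\nabla u,\nabla^2u\}$ in $L^2$ and $A\in\{\varrho,u,h(\varrho),f(\varrho)\}$ in $L^{3/s}$, using \eqref{1hf} to dominate $h(\varrho),f(\varrho)$ by $|\varrho|$ pointwise.

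The split at $s=1/2$ reflects which Sobolev embedding controls $\norm{A}_{L^{3/s}}$. For $s\in(1/2,3/2)$ the exponent $3/s\in(2,6]$ lies in the range of $H^1\hookrightarrow L^{3/s}$, and Lemma \ref{1interpolation} gives
$$\norm{A}_{L^{3/s}}\lesssim\norm{A}_{L^2}^{s-1/2}\norm{\nabla A}_{L^2}^{3/2-s},$$
which, multiplied by $\norm{B}_{L^2}\lesssim\norm{(\nabla\varrho,\nabla u)}_{H^1}$, produces a factor $\norm{(\varrho,u)}_{L^2}^{s-1/2}\norm{(\nabla\varrho,\nabla u)}_{H^1}^{5/2-s}$ on the right-hand side, yielding \eqref{1E_s2}. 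For $s\in(0,1/2]$ the exponent $3/s\ge 6$ escapes $H^1$, so I climb one derivative:
$$\norm{A}_{L^{3/s}}\lesssim\norm{\nabla A}_{L^2}^{1/2+s}\norm{\nabla^2 A}_{L^2}^{1/2-s}\lesssim\norm{\nabla A}_{H^1},$$
and the product with $\norm{B}_{L^2}$ absorbs cleanly into $\norm{(\nabla\varrho,\nabla u)}_{H^1}^2$, giving \eqref{1E_s}.

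The main obstacle I anticipate is bookkeeping: for every one of the five nonlinear terms and every admissible $s$, I must confirm that the Gagliardo--Nirenberg exponents lie in $[0,1]$ and that the factor chosen to sit in $L^2$ is one whose norm is already controlled by the positive-order energy \eqref{1HNbound}, so the combined estimate closes. The split at $s=1/2$ is dictated by the threshold of $H^1\hookrightarrow L^{3/s}$, while the upper bound $s<3/2$ is dictated by the HLS inequality; once these thresholds are respected, the remainder is a careful but routine application of H\"{o}lder's inequality.
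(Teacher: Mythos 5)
Your proposal is correct and follows essentially the same path as the paper: apply $\Lambda^{-s}$, derive the energy identity (linear cross terms cancel, coercivity \eqref{1coercive} gives the viscous dissipation), then for each of the five nonlinear terms use Cauchy--Schwarz, the Hardy--Littlewood--Sobolev bound $\norm{\Lambda^{-s}N}_{L^2}\lesssim\norm{N}_{L^{p}}$ with $1/p=1/2+s/3$, H\"older with the derivative factor in $L^2$ and the undifferentiated factor in $L^{3/s}$, and Gagliardo--Nirenberg split at $s=1/2$ according to whether $3/s\le 6$ or $3/s>6$. The paper's estimates \eqref{1E_s_1}--\eqref{1E_s_5} and \eqref{1E_s_12}--\eqref{1E_s_52} are precisely this bookkeeping, so no gap.
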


\begin{proof}
Applying $\Lambda^{-s}$ to $\eqref{1NS2}_1, \eqref{1NS2}_2$ and multiplying
the resulting by $\displaystyle\gamma\Lambda^{-s}\varrho, \Lambda^{-s} u$
respectively, summing up and then integrating over $\mathbb{R}^3$ by parts,
we obtain
\begin{equation}  \label{1E_s_0}
\begin{split}
&\frac{1}{2}\frac{d}{dt}\int_{\mathbb{R}^3} \gamma|\Lambda^{-s} \varrho|^2+|
\Lambda^{-s} u|^2\,dx +\int_{\mathbb{R}^3}\bar{\mu}|\nabla \Lambda^{-s}
u|^2+(\bar{\mu}+\bar{\lambda})| \mathrm{div} \Lambda^{-s} u|^2\,dx \\
&\quad=\int_{\mathbb{R}^3}\gamma\Lambda^{-s}(-\varrho\mathrm{div}%
u-u\cdot\nabla\varrho)\Lambda^{-s}\varrho \\
&\qquad\ -\Lambda^{-s}\left(u\cdot\nabla u+h(\varrho)(\bar{\mu}\Delta u+(%
\bar{\mu}+\bar{\lambda})\nabla\mathrm{div}u)+f(\varrho)\nabla\varrho\right)%
\cdot\Lambda^{-s} u\,dx \\
&\quad:=W_1+W_2+W_3+W_4+W_5.
\end{split}%
\end{equation}

We now restrict the value of $s$ in order to estimate the nonlinear terms in
the right hand side of \eqref{1E_s_0}. If $s\in (0,1/2]$, then $1/2+s/3<1$
and $3/s\geq 6$. Then using the estimate \eqref{1Riesz es} of Riesz
potential in Lemma \ref{1Riesz} and the Sobolev interpolation of Lemma \ref%
{1interpolation}, together with H\"{o}lder's and Young's inequalities, we
obtain
\begin{equation}
\begin{split}
W_{1}& =-\gamma \int_{\mathbb{R}^{3}}\Lambda ^{-s}(\varrho \mathrm{div}%
u)\Lambda ^{-s}\varrho \,dx\lesssim \norm{\Lambda^{-s}(\varrho{\rm div}u )}%
_{L^{2}}\norm{\Lambda^{-s}\varrho}_{L^{2}} \\
& \lesssim \norm{\varrho{\rm div}u }_{L^{\frac{1}{1/2+s/3}}}%
\norm{\Lambda^{-s}\varrho}_{L^{2}}\lesssim \norm{\varrho}_{L^{3/s}}%
\norm{\nabla u }_{L^{2}}\norm{\Lambda^{-s}\varrho}_{L^{2}} \\
& \lesssim \norm{ \nabla\varrho}_{L^{2}}^{1/2-s}\norm{ \nabla^2\varrho}%
_{L^{2}}^{1/2+s}\norm{\nabla u}_{L^{2}}\norm{\Lambda^{-s}\varrho}_{L^{2}} \\
& \lesssim \left( \norm{ \nabla\varrho}_{L^{2}}^{2}+\norm{ \nabla^2\varrho}%
_{L^{2}}^{2}+\norm{\nabla u}_{L^{2}}^{2}\right) \norm{\Lambda^{-s}\varrho}%
_{L^{2}}.
\end{split}
\label{1E_s_1}
\end{equation}%
Similarly, we can bound the remaining terms by
\begin{eqnarray}
&&W_{2}=-\gamma \int_{\mathbb{R}^{3}}\Lambda ^{-s}(u\cdot \nabla \varrho
)\Lambda ^{-s}\varrho \,dx\lesssim \left( \norm{ \nabla u}_{L^{2}}^{2}+%
\norm{\nabla^2 u}_{L^{2}}^{2}+\norm{ \nabla\varrho}_{L^{2}}^{2}\right) %
\norm{\Lambda^{-s}\varrho}_{L^{2}},  \label{1E_s_2} \\
&&W_{3}=-\int_{\mathbb{R}^{3}}\Lambda ^{-s}\left( u\cdot \nabla u\right)
\cdot \Lambda ^{-s}u\,dx\lesssim \left( \norm{ \nabla u}_{L^{2}}^{2}+%
\norm{\nabla^2 u}_{L^{2}}^{2}\right) \norm{\Lambda^{-s}u}_{L^{2}},
\label{1E_s_3} \\
&&%
\begin{split}
W_{4}& =-\int_{\mathbb{R}^{3}}\Lambda ^{-s}\left( h(\varrho )(\bar{\mu}%
\Delta u+(\bar{\mu}+\bar{\lambda})\nabla \mathrm{div}u)\right) \Lambda
^{-s}u\,dx \\
& \lesssim \left( \norm{ \nabla\varrho}_{L^{2}}^{2}+\norm{\nabla^2\varrho}%
_{L^{2}}^{2}+\norm{\nabla^2 u }_{L^{2}}^{2}\right) \norm{\Lambda^{-s}u}%
_{L^{2}},
\end{split}
\label{1E_s_4} \\
&&W_{5}=-\int_{\mathbb{R}^{3}}\Lambda ^{-s}\left( f(\varrho )\nabla
\varrho \right) \cdot \Lambda ^{-s}u\,dx\lesssim \left( \norm{ \nabla\varrho}%
_{L^{2}}^{2}+\norm{\nabla^2\varrho}_{L^{2}}^{2}\right) \norm{\Lambda^{-s}u}%
_{L^{2}}.  \label{1E_s_5}
\end{eqnarray}%
Hence, plugging the estimates \eqref{1E_s_1}--\eqref{1E_s_5} into %
\eqref{1E_s_0}, we deduce \eqref{1E_s}.

Now if $s\in (1/2,3/2)$, we shall estimate the right hand side of %
\eqref{1E_s_0}, $i.e.$, $W_{1}\sim W_{5}$ in a different way. Since $s\in
(1/2,3/2)$, we have that $1/2+s/3<1$ and $2<3/s<6$. Then using the
(different) Sobolev interpolation, we have
\begin{eqnarray}
&&%
\begin{split}
W_{1}& =-\gamma \int_{\mathbb{R}^{3}}\Lambda ^{-s}(\varrho \mathrm{div}%
u)\Lambda ^{-s}\varrho \,dx\lesssim \norm{\varrho}_{L^{3/s}}\norm{\nabla u }%
_{L^{2}}\norm{\Lambda^{-s}\varrho}_{L^{2}} \\
& \lesssim \norm{ \varrho}_{L^{2}}^{s-1/2}\norm{ \nabla\varrho}%
_{L^{2}}^{3/2-s}\norm{\nabla u}_{L^{2}}\norm{\Lambda^{-s}\varrho}_{L^{2}}.
\end{split}
\label{1E_s_12} \\
&&W_{2}=-\gamma \int_{\mathbb{R}^{3}}\Lambda ^{-s}(u\cdot \nabla \varrho
)\Lambda ^{-s}\varrho \,dx\lesssim \norm{ u}_{L^{2}}^{s-1/2}\norm{ \nabla u}%
_{L^{2}}^{3/2-s}\norm{\nabla \varrho}_{L^{2}}\norm{\Lambda^{-s}\varrho}%
_{L^{2}},  \label{1E_s_22} \\
&&W_{3}=-\int_{\mathbb{R}^{3}}\Lambda ^{-s}\left( u\cdot \nabla u\right)
\cdot \Lambda ^{-s}u\,dx\lesssim \norm{ u}_{L^{2}}^{s-1/2}\norm{ \nabla u}%
_{L^{2}}^{3/2-s}\norm{\nabla u}_{L^{2}}\norm{\Lambda^{-s}u}_{L^{2}},
\label{1E_s_32} \\
&&%
\begin{split}
W_{4}& =-\int_{\mathbb{R}^{3}}\Lambda ^{-s}\left( h(\varrho )(\bar{\mu}%
\Delta u+(\bar{\mu}+\bar{\lambda})\nabla \mathrm{div}u)\right) \Lambda
^{-s}u\,dx \\
& \lesssim \norm{ \varrho}_{L^{2}}^{s-1/2}\norm{ \nabla \varrho}%
_{L^{2}}^{3/2-s}\norm{\nabla^2 u }_{L^{2}}\norm{\Lambda^{-s}u}_{L^{2}},
\end{split}
\label{1E_s_42} \\
&&W_{5}=-\int_{\mathbb{R}^{3}}\Lambda ^{-s}\left( f(\varrho )\nabla
\varrho \right) \cdot \Lambda ^{-s}u\,dx\lesssim \norm{ \varrho}%
_{L^{2}}^{s-1/2}\norm{ \nabla \varrho}_{L^{2}}^{3/2-s}\norm{\nabla \varrho}%
_{L^{2}}\norm{\Lambda^{-s}\varrho}_{L^{2}}.  \label{1E_s_52}
\end{eqnarray}%
Hence, plugging the estimates \eqref{1E_s_12}--\eqref{1E_s_52} into %
\eqref{1E_s_0}, we deduce \eqref{1E_s2}.
\end{proof}


\subsection{Proof of Theorem \protect\ref{1mainth}}

\label{sec 3.3} 

In this subsection, we shall combine all the energy estimates that we have
derived in the previous two subsections to prove Theorem \ref{1mainth}.

We first close the energy estimates at each $\ell$-th level in our weaker
sense. Let $N\ge 3$ and $0\le\ell\le m-1$ with $[\frac{N}{2}]+2\le m\le N$.
Summing up the estimates \eqref{1E_k} of Lemma \ref{1Ekle} for from $k=\ell$
to $m-1$, and then adding the resulting estimates with the estimates %
\eqref{1E_k+1} of Lemma \ref{1Ek+1le} for $k=m-1$, by changing the index and
since $\delta$ is small, we obtain
\begin{equation}  \label{1proof1}
\frac{d}{dt}\sum_{\ell\le k\le m}\left(\gamma\norm{\nabla^{k} \varrho}%
_{L^2}^2+\norm{\nabla^{k}u}_{L^2}^2\right) +C_1\sum_{\ell+1\le k\le m+1}%
\norm{\nabla^{k} u}_{L^2}^2 \le C_2 \delta \sum_{\ell+1\le k\le m} %
\norm{\nabla^{k}\varrho}_{L^2}^2.
\end{equation}
Summing up the estimates \eqref{1E_kE_k+1} of Lemma \ref{1EkEk+1le} for from
$k=\ell$ to $m-1$, we have
\begin{equation}  \label{1proof2}
\frac{d}{dt}\sum_{\ell\le k\le m-1}\int_{\mathbb{R}^3}
\nabla^ku\cdot\nabla\nabla^k\varrho\,dx +C_3\sum_{\ell+1\le k\le m}%
\norm{\nabla^{k}\varrho}_{L^2}^2 \le C_4 \sum_{\ell+1\le k\le m+1}%
\norm{\nabla^{k}u}_{L^2}^2.
\end{equation}
Multiplying \eqref{1proof2} by $2C_2\delta/C_3$, adding it with %
\eqref{1proof1}, since $\delta>0$ is small, we deduce that there exists a
constant $C_5>0$ such that for $0\le \ell\le m-1$,
\begin{equation}  \label{1proof3}
\begin{split}
& \frac{d}{dt}\left\{\sum_{\ell\le k\le m} \left(\gamma\norm{\nabla^{k}
\varrho}_{L^2}^2+\norm{\nabla^{k} u}_{L^2}^2\right) +\frac{2C_1\delta}{C_3}%
\sum_{\ell\le k\le m-1}\int_{\mathbb{R}^3}
\nabla^ku\cdot\nabla\nabla^k\varrho\,dx\right\} \\
&\quad+C_5\left\{\sum_{\ell+1\le k\le m} \norm{\nabla^{k}\varrho}%
_{L^2}^2+\sum_{\ell+1\le k\le m+1}\norm{\nabla^{k} u}_{L^2}^2\right\}\le 0.
\end{split}%
\end{equation}

Next, we define $\mathcal{E}_\ell^m(t)$ to be $C_5^{-1}$ times the
expression under the time derivative in \eqref{1proof3}. Observe that since $%
\delta$ is small $\mathcal{E}_\ell^m(t)$ is equivalent to $\norm{\nabla^\ell
\varrho(t)}_{H^{m-\ell}}^2+\norm{\nabla^\ell u(t)}_{H^{m-\ell}}^2$, that is,
there exists a constant $C_6>0$ such that for $0\le \ell\le m-1$,
\begin{equation}  \label{1proof4}
C_6^{-1}\norm{\nabla^\ell \varrho(t)}_{H^{m-\ell}}^2+\norm{\nabla^\ell u(t)}%
_{H^{m-\ell}}^2 \le \mathcal{E}_\ell^m(t)\le C_6\norm{\nabla^\ell \varrho(t)}%
_{H^{m-\ell}}^2+\norm{\nabla^\ell u(t)}_{H^{m-\ell}}^2.
\end{equation}
Then we may write \eqref{1proof3} as that for $0\le \ell\le m-1$,
\begin{equation}  \label{1proof5}
\frac{d}{dt}\mathcal{E}_\ell^m+\norm{\nabla^{\ell+1}\varrho}%
_{H^{m-\ell-1}}^2+\norm{\nabla^{\ell+1}u}_{H^{m-\ell}}^2\le 0.
\end{equation}

\begin{proof}[Proof of \eqref{1HNbound}]
Taking $\ell =0$ and $m=[\frac{N}{2}]+2$ in \eqref{1proof5}, and then
integrating directly in time, in light of \eqref{1proof4}, we obtain
\begin{equation}
\begin{split}
\norm{ \varrho(t)}_{H^{[\frac{N}{2}]+2}}^{2}+\norm{ u(t)}_{H^{[\frac{N}{2}%
]+2}}^{2}& \leq C_{6}\mathcal{E}_{0}^{[\frac{N}{2}]+2}(t)\leq C_{6}\mathcal{E%
}_{0}^{[\frac{N}{2}]+2}(0) \\
& \leq C_{6}^{2}\left( \norm{\varrho_0}_{H^{[\frac{N}{2}]+2}}^{2}+\norm{ u_0}%
_{H^{[\frac{N}{2}]+2}}^{2}\right) .
\end{split}
\label{1proof6}
\end{equation}%
By a standard continuity argument, this closes the a priori estimates %
\eqref{1a priori} if we assume $\norm{ \varrho_0}_{H^{[\frac{N}{2}]+2}}+%
\norm{ u_0}_{H^{[\frac{N}{2}]+2}}\leq \delta _{0}$ is sufficiently small.
This in turn allows us to take $\ell =0$ and $[\frac{N}{2}]+2\leq m\leq N$
in \eqref{1proof5}, and then integrate it directly in time, to obtain
\begin{equation}
\norm{ \varrho(t)}_{H^{m}}^{2}+\norm{ u(t)}_{H^{m}}^{2}+\int_{0}^{t}%
\norm{\nabla\varrho(\tau)}_{H^{m-1}}^{2}+\norm{\nabla u(\tau)}%
_{H^{m}}^{2}\,d\tau \leq C_{4}^{2}\left( \norm{ \varrho_0}_{H^{m}}^{2}+%
\norm{u_0}_{H^{m}}^{2}\right) .  \label{1proof7}
\end{equation}%
This proves \eqref{1HNbound}.
\end{proof}

Now we turn to prove \eqref{1H-sbound}--\eqref{1decay}. However, we are not
able to prove them for all $s\in[0,3/2)$ at this moment. We shall first
prove them for $s\in [0,1/2]$.

\begin{proof}[{Proof of \eqref{1H-sbound}--\eqref{1decay} for {$s\in \lbrack
0,1/2]$}}]
We define $\mathcal{E}_{-s}(t)$ to be the expression under the time
derivative in the estimates \eqref{1E_s}--\eqref{1E_s2} of Lemma \ref{1Esle}%
, which is equivalent to $\norm{ \Lambda^{-s}\varrho(t)}_{L^{2}}^{2}+%
\norm{\Lambda^{-s} u(t)}_{L^{2}}^{2}$. Then, integrating in time \eqref{1E_s}%
, by \eqref{1HNbound}, we obtain that for $s\in (0,1/2]$,
\begin{equation}
\begin{split}
\mathcal{E}_{-s}(t)& \leq {\mathcal{E}_{-s}(0)}+C\int_{0}^{t}\norm{
\nabla\varrho(\tau), \nabla u(\tau)}_{H^{1}}^{2}\sqrt{\mathcal{E}_{-s}(\tau )%
}\,d\tau  \\
& \leq  C_0\left(1+\sup_{0\leq \tau \leq t}\sqrt{\mathcal{E}_{-s}(\tau )}\right).
\end{split}
\label{1-sin2}
\end{equation}%
This implies \eqref{1H-sbound} for $s\in \lbrack 0,1/2]$, that is,
\begin{equation}
\norm{ \Lambda^{-s}\varrho(t)}_{L^{2}}^{2}+\norm{\Lambda^{-s} u(t)}%
_{L^{2}}^{2}\leq C_{0}\ \hbox{ for }s\in \lbrack 0,1/2].
\label{1proof8}
\end{equation}%
If $\ell =1,\dots ,N-1$, we may use Lemma \ref{1-sinte} to have
\begin{equation}
\norm{\nabla^{\ell+1} f}_{L^{2}}\geq C\norm{ \Lambda^{-s} f}_{L^{2}}^{-\frac{%
1}{\ell +s}}\norm{\nabla^\ell f}_{L^{2}}^{1+\frac{1}{\ell +s}}.
\label{1proof10}
\end{equation}%
By this fact and \eqref{1proof8}, we may find
\begin{equation}
\norm{\nabla^{\ell+1}\varrho}_{L^{2}}^{2}+\norm{\nabla^{\ell+1}u}%
_{L^{2}}^{2}\geq C_{0}\left( \norm{\nabla^{\ell}\varrho}_{L^{2}}^{2}+%
\norm{\nabla^{\ell}u}_{L^{2}}^{2}\right) ^{1+\frac{1}{\ell +s}}.
\end{equation}%
This together with \eqref{1HNbound} implies in particular that for $\ell
=1,\dots ,N-1$,
\begin{equation}
\norm{\nabla^{\ell+1}\varrho}_{H^{N-\ell -1}}^{2}+\norm{\nabla^{\ell+1}u}%
_{H^{N-\ell -1}}^{2}\geq C_{0}\left( \norm{\nabla^{\ell}\varrho}_{H^{N-\ell
}}^{2}+\norm{\nabla^{\ell}u}_{H^{N-\ell }}^{2}\right) ^{1+\frac{1}{\ell +s}}.
\label{1inin}
\end{equation}%
In view of \eqref{1proof4} and \eqref{1inin}, we then deduce from %
\eqref{1proof5} with $m=N$ the following time differential inequality
\begin{equation}
\frac{d}{dt}\mathcal{E}_{\ell }^{N}+C_{0}\left( \mathcal{E}_{\ell
}^{N}\right) ^{1+\frac{1}{\ell +s}}\leq 0\ \hbox{ for }\ell =1,\dots ,N-1.
\label{1proof11}
\end{equation}%
Solving this inequality directly gives, together with \eqref{1proof7},
\begin{equation}
\mathcal{E}_{\ell }^{N}(t)\leq C_{0}(1+t)^{-(\ell +s)}\ \hbox{ for }\ell
=1,\dots ,N-1.  \label{1proof12}
\end{equation}

Consequently, in view of \eqref{1proof4}, we obtain from \eqref{1proof12}
that for $s\in \lbrack 0,1/2]$,
\begin{equation}
\norm{\nabla^\ell \varrho(t)}_{H^{N-\ell }}^{2}+\norm{\nabla^\ell u(t)}%
_{H^{N-\ell }}^{2}\leq C_{0}(1+t)^{-(\ell +s)}\ \hbox{ for }\ell =1,\dots
,N-1.  \label{1proof13}
\end{equation}%
Thus, by \eqref{1proof13}, \eqref{1HNbound} and the interpolation, we deduce %
\eqref{1decay} for $s\in \lbrack 0,1/2]$.
\end{proof}

\begin{proof}[Proof of \eqref{1H-sbound}--\eqref{1decay} for {$s\in(1/2,3/2)$%
}]
Notice that the arguments for the case $s\in[0,1/2]$ can not be applied to
this case. However, observing that we have $\varrho_0, u _0\in \dot{H}^{-1/2}
$ since $\dot{H}^{-s}\cap L^2\subset\dot{H}^{-s^{\prime}}$ for any $%
s^{\prime}\in [0,s]$, we then deduce from what we have proved for %
\eqref{1H-sbound} and \eqref{1decay} with $s=1/2$ that the following decay
result holds:
\begin{equation}  \label{1proof14}
\norm{\nabla^\ell \varrho(t)}_{H^{N-\ell}}^2+\norm{\nabla^\ell u(t)}%
_{H^{N-\ell}}^2 \le C_0(1+t)^{-(\ell+1/2)}\ \hbox{ for }-\frac{1}{2}\le
\ell\le N-1.
\end{equation}
Hence, by \eqref{1proof14}, we deduce from \eqref{1E_s2} that for $%
s\in(1/2,3/2)$,
\begin{equation}  \label{1-sin2''}
\begin{split}
\mathcal{E}_{-s}(t) &\le {\mathcal{E}_{-s}(0)}+C\int_0^t \left(\norm{
(\varrho, u)}_{L^2}^{s-1/2}\norm{(\nabla\varrho, \nabla u)}%
_{H^1}^{5/2-s}\right)\sqrt{\mathcal{E}_{-s}(\tau)} \,d\tau \\
&\le C_0+ C_0\int_0^t(1+\tau)^{-(7/4-s/2)}\,d\tau\sup_{0\le\tau\le t}\sqrt{%
\mathcal{E}_{-s}(\tau)} \\
&\le C_0\left(1+\sup_{0\le\tau\le t}\sqrt{\mathcal{E}_{-s}(\tau)}\right).
\end{split}%
\end{equation}
This implies \eqref{1H-sbound} for $s\in(1/2,3/2)$, that is,
\begin{equation}  \label{1sss}
\norm{ \Lambda^{-s}\varrho(t)}_{L^2}^2+\norm{\Lambda^{-s} u(t)}_{L^2}^2 \le
C_0\ \hbox{ for }s\in(1/2,3/2).
\end{equation}

Now that we have proved \eqref{1sss}, we may repeat the arguments leading to %
\eqref{1decay} for $s\in(1/2,3/2)$ to prove that it holds for $s\in(1/2,3/2)$%
. This completes the proof of Theorem \ref{1mainth}.
\end{proof}

\section{Boltzmann equation}

\subsection{Energy estimates}

\label{sec 4.1} 
In this subsection, we will derive the a priori nonlinear energy estimates
for the equation \eqref{perturbation}. We first derive the following standard energy estimates:

\begin{lemma}
\label{lemma linear 1} Let $k=0,\dots,N$, then we have
\begin{equation}  \label{Linear ener 1}
\frac{d}{dt}\norm{\nabla^k f}_{L^2}^2 +\sigma_0\norm{\nabla^k\{{\bf I-P}\} f}%
_\nu^2 \lesssim \sum_{|\gamma_1|\le k}\norm{|\nabla^{|\gamma_1|}
f|_2|\nabla^{k-|\gamma_1|} f|_\nu}_{L^2}^2.
\end{equation}
\end{lemma}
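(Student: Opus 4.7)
The plan is to apply $\nabla^k$ to the perturbation equation \eqref{perturbation}, take the $L^2(\mathbb{R}_x^3\times\mathbb{R}_v^3)$ inner product with $\nabla^k f$, and exploit the cancellation structure of each of the three resulting terms. The transport term vanishes, since
$$(v\cdot\nabla_x \nabla^k f,\nabla^k f)=\frac{1}{2}\int_{\mathbb{R}^3_x\times\mathbb{R}^3_v} v\cdot\nabla_x|\nabla^k f|^2\,dx\,dv=0.$$
The linear collision term contributes $(L\nabla^k f,\nabla^k f)$, which by the coercivity \eqref{coercive boltzmann} is bounded below by $\sigma_0\norm{\nabla^k\{\mathbf{I-P}\}f}_\nu^2$, giving exactly the dissipation claimed on the left-hand side of \eqref{Linear ener 1}.

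The main work is thus to control the nonlinear contribution $(\nabla^k\Gamma(f,f),\nabla^k f)$. Since $\Gamma$ is bilinear, Leibniz yields
$$\nabla^k\Gamma(f,f)=\sum_{|\gamma_1|\le k}C_k^{|\gamma_1|}\,\Gamma(\nabla^{\gamma_1}f,\nabla^{k-\gamma_1}f).$$
A crucial structural fact is the conservation law $\mathbf{P}\,\Gamma(g,h)=0$ in $L^2_v$ for every $(t,x)$, which allows us to replace the test function by its microscopic part:
$$\bigl(\Gamma(\nabla^{\gamma_1}f,\nabla^{k-\gamma_1}f),\nabla^k f\bigr)=\bigl(\Gamma(\nabla^{\gamma_1}f,\nabla^{k-\gamma_1}f),\nabla^k\{\mathbf{I-P}\}f\bigr).$$
For each summand I will then invoke the standard pointwise-in-$x$ hard-sphere estimate $|\langle\Gamma(g,h),\phi\rangle_v|\lesssim(|g|_2|h|_\nu+|g|_\nu|h|_2)|\phi|_\nu$, which, after integrating in $x$ and relabeling $|\gamma_1|\leftrightarrow k-|\gamma_1|$ in the symmetric term, gives the control
$$|(\nabla^k\Gamma(f,f),\nabla^k f)|\lesssim \sum_{|\gamma_1|\le k}\int_{\mathbb{R}^3_x} |\nabla^{|\gamma_1|}f|_2\,|\nabla^{k-|\gamma_1|}f|_\nu\,|\nabla^k\{\mathbf{I-P}\}f|_\nu\,dx.$$
Applying Cauchy--Schwarz in $x$ and Young's inequality with a small parameter $\epsilon>0$ yields
$$\int|\nabla^{|\gamma_1|}f|_2\,|\nabla^{k-|\gamma_1|}f|_\nu\,|\nabla^k\{\mathbf{I-P}\}f|_\nu\,dx \le \frac{1}{4\epsilon}\norm{|\nabla^{|\gamma_1|}f|_2|\nabla^{k-|\gamma_1|}f|_\nu}_{L^2}^2+\epsilon\norm{\nabla^k\{\mathbf{I-P}\}f}_\nu^2,$$
and choosing $\epsilon$ small enough (depending on $\sigma_0$) we absorb the last term into the coercive contribution on the left-hand side to obtain \eqref{Linear ener 1}.

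The main obstacle is not analytic but structural: one must use the conservation identity $\mathbf{P}\Gamma=0$ to turn the test function $\nabla^k f$ into $\nabla^k\{\mathbf{I-P}\}f$ so that the Young error can be absorbed by the only dissipation currently available, namely the $\nu$-weighted norm of the microscopic part. A secondary bookkeeping point is to apply the trilinear estimate with the $|\cdot|_2$ factor placed on whichever of $\nabla^{\gamma_1}f$ or $\nabla^{k-\gamma_1}f$ matches an index already appearing in the right-hand side sum; because the sum ranges symmetrically over all $|\gamma_1|\le k$, both orderings are accounted for.
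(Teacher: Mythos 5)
Your argument is correct and follows essentially the same route as the paper's proof of Lemma \ref{lemma linear 1}: integrate against $\nabla^k f$, use coercivity of $L$ on the microscopic part, use $\mathbf{P}\Gamma=0$ to replace the test function by $\nabla^k\{\mathbf{I-P}\}f$, Leibniz-expand $\Gamma$, apply the $\nu$-weighted trilinear estimate (equivalent to \eqref{nonlineares3} with $\eta=1/2$ plus Cauchy--Schwarz in $v$), symmetrize the indices, and absorb the $\nu$-norm of $\nabla^k\{\mathbf{I-P}\}f$ via Young's inequality. No gap.
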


\begin{proof}
Applying $\partial^\gamma$ with $|\gamma|=k$, $0\le k\le N$ to %
\eqref{perturbation}, multiplying the resulting identity by $\partial^\gamma
f$ and then integrating over $\mathbb{R}^3_x\times \mathbb{R}^3_v$, we
obtain
\begin{equation}  \label{EL11}
\frac{1}{2}\frac{d}{dt}\norm{\partial^\gamma f}_{L^2}^2 + (L\partial^\gamma f
,\partial^\gamma f ) =(\partial^\gamma\Gamma(f,f), \partial^\gamma f) .
\end{equation}
The estimate \eqref{lineares1} of Lemma \ref{linearcol1} implies
\begin{equation}  \label{EL12}
(L\partial^\gamma f ,\partial^\gamma f )\ge \sigma_0\norm{\partial^\gamma\{{%
\bf I-P}\} f}_\nu^2.
\end{equation}
On the other hand, by the collision invariant property, the estimate %
\eqref{nonlineares3} (with $\eta=1/2$) of Lemma \ref{nonlinearcol1} and
symmetry, together with Cauchy's inequality, we obtain
\begin{equation}  \label{EL13}
\begin{split}
(\partial^\gamma\Gamma(f,f),\partial^\gamma f)& =\sum_{\gamma_1\le
\gamma}C_\gamma^{\gamma_1}(\Gamma(\partial^{\gamma-\gamma_1} f,
\partial^{\gamma_1} f),\partial^\gamma \{\mathbf{I-P}\}f) \\
&\le C\sum_{\gamma_1\le \gamma}|\nu^{-1/2}\Gamma(\partial^{\gamma-\gamma_1}
f,\partial^{\gamma_1} f)|_2|\nu^{1/2}\partial^\gamma \{\mathbf{I-P}\}f|_2 \\
&\le C\sum_{\gamma_1\le \gamma}\int_{\mathbb{R}^3_x}|\partial^{\gamma_1}
f|_2|\partial^{\gamma-\gamma_1} f|_\nu|\partial^\gamma \{\mathbf{I-P}%
\}f|_\nu\,dx \\
&\le C\sum_{|\gamma_1|\le k}\norm{|\nabla^{|\gamma_1|}
f|_2|\nabla^{k-|\gamma_1|} f|_\nu}_{L^2}^2 +\frac{\sigma_0}{2}\norm{\partial^%
\gamma \{{\bf I-P}\}f}_\nu^2.
\end{split}%
\end{equation}
Hence, by the estimates \eqref{EL12}--\eqref{EL13}, we deduce \eqref{Linear
ener 1} from \eqref{EL11}.
\end{proof}

Next, notice that the dissipation estimate in \eqref{Linear ener 1} is
degenerate, and it only controls the microscopic part $\{\mathbf{I- P}\}f$.
Hence, in order to get the full dissipation estimate we shall use the
equation \eqref{perturbation} to estimate the hydrodynamic part $\mathbf{P}f$
in terms of the microscopic part.

\begin{lemma}
\label{lemma linear 2} Let $N\ge 3$. If $\norm{f(t)}_{L^2_vH^{N}_x}\lesssim
\delta$, then we have that for $k=0,\dots, N-1$, there exists a constant $%
C_1>0$ such that
\begin{equation}  \label{linear ener 2}
\frac{dG_k}{dt} +\norm{\nabla^{k+1} \mathbf{P }f }_{L^2}^2 \le C_1\left\{\norm{\nabla^{k}\{%
\mathbf{I-P }\} f}_{L^2}^2+\norm{\nabla^{k+1}\{\mathbf{I-P }\} f }_{L^2}^2\right\}.
\end{equation}
Here $G_k(t)$ is defined by \eqref{G_k(t)} with the property
\begin{equation}  \label{G_k estimate}
|G_k|\lesssim\norm{\nabla^k f}_{L^2}^2+\norm{\nabla^{k+1} f}_{L^2}^2.
\end{equation}
\end{lemma}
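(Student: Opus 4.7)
The plan is to derive the missing dissipation for the hydrodynamic part $\mathbf{P}f$ through a macroscopic (fluid-type) argument, since $\eqref{coercive boltzmann}$ only controls $\{\mathbf{I-P}\}f$. Writing $\mathbf{P}f = \{a(t,x) + b(t,x)\cdot v + c(t,x)|v|^2\}\sqrt{\mu}$, I would first project $\eqref{perturbation}$ onto $\sqrt{\mu}$, $v_i\sqrt{\mu}$, $|v|^2\sqrt{\mu}$ and the tensor bases $v_iv_j\sqrt{\mu}$, $v_i|v|^2\sqrt{\mu}$ to obtain a closed fluid-type system of the schematic form
\begin{equation*}
\partial_t a + \nabla\cdot b = 0, \quad \partial_t b + \nabla(a+2c) + \nabla\cdot \Theta(\{\mathbf{I-P}\}f) = g_b, \quad \partial_t c + \tfrac{1}{3}\nabla\cdot b + \nabla\cdot\Lambda(\{\mathbf{I-P}\}f) = g_c,
\end{equation*}
together with higher-moment identities that express the symmetric gradient $\partial_i b_j + \partial_j b_i$ and $\partial_j c$ in terms of $\partial_t$ of moments of $\{\mathbf{I-P}\}f$ plus nonlinear sources $g_b,g_c$ coming from $\Gamma(f,f)$.

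With this system in hand, I would define $G_k$ as a carefully weighted linear combination of cross-interaction integrals, essentially
\begin{equation*}
G_k = \sum_j \int_{\mathbb{R}^3}\! \nabla^k b_j\, \partial_j \nabla^k a\, dx + \sum_{i,j}\int_{\mathbb{R}^3}\! (\partial_i \nabla^k b_j + \partial_j \nabla^k b_i)\, \nabla^k \Theta_{ij}\, dx + \sum_j\int_{\mathbb{R}^3}\! \partial_j\nabla^k c\, \nabla^k\Lambda_j\, dx,
\end{equation*}
with coefficients chosen to balance the couplings among $a,b,c$. The bound $\eqref{G_k estimate}$ is then immediate by Cauchy-Schwarz, because each integrand pairs an order-$k$ quantity with an order-$(k+1)$ quantity in $\nabla^k f$ and $\nabla^{k+1} f$. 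Differentiating $G_k$ in time and substituting the macroscopic equations replaces every $\partial_t$ by spatial derivatives; after integration by parts the diagonal terms produce $C\,\{\|\nabla^{k+1}a\|_{L^2}^2 + \|\nabla^{k+1}b\|_{L^2}^2 + \|\nabla^{k+1}c\|_{L^2}^2\} \simeq C\,\|\nabla^{k+1}\mathbf{P}f\|_{L^2}^2$ with a definite sign, while the cross terms involving $\Theta,\Lambda$ give exactly quantities of the form $\|\nabla^k\{\mathbf{I-P}\}f\|_{L^2}^2$ and $\|\nabla^{k+1}\{\mathbf{I-P}\}f\|_{L^2}^2$ after Cauchy's inequality, yielding the right-hand side of $\eqref{linear ener 2}$.

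The main obstacle is handling the nonlinear sources $g_b, g_c$, which after applying $\nabla^k$ and pairing with a derivative of $\mathbf{P}f$ produce expressions of the type $(\nabla^k\Gamma(f,f),\,\nabla^k\mathbf{P}f)$. For these I would invoke the Leibniz expansion together with the bound $|(\nu^{-1/2}\Gamma(g_1,g_2),h)|\lesssim \|g_1\|_\nu\|g_2\|_\nu$ (a consequence of the nonlinear collision estimate used in the proof of Lemma \ref{lemma linear 1}) and the Gagliardo-Nirenberg interpolation on $\mathbb{R}^3_x$ from Lemma \ref{1interpolation}, using the smallness hypothesis $\|f(t)\|_{L^2_vH^N_x}\lesssim\delta$ to extract a small prefactor $\delta$. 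These contributions are then absorbed into the leading dissipation $\|\nabla^{k+1}\mathbf{P}f\|_{L^2}^2$ on the left and into the microscopic terms $\|\nabla^{k}\{\mathbf{I-P}\}f\|_{\nu}^2 + \|\nabla^{k+1}\{\mathbf{I-P}\}f\|_{\nu}^2$ on the right. The restriction $k\le N-1$ enters naturally here: $G_k$ itself is bilinear in $\nabla^k f$ and $\nabla^{k+1} f$, and the nonlinear interpolation consumes one additional derivative, both consistent with the $H^N_x$ regularity controlled by $\eqref{energy es}$.
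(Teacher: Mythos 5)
Your proposal follows essentially the same route as the paper. The paper represents $\mathbf{P}f$ via $a,b,c$ exactly as you do, but instead of re-deriving the fluid-type moment system and constructing $G_k$ by hand, it directly invokes Lemma 6.1 of \cite{G2006}, which already provides the macroscopic dissipation inequality \eqref{macro micro} together with the explicit functional $G_k$ in \eqref{G_k(t)}. Your plan to rebuild this from the local conservation laws, the higher-moment identities, and a carefully weighted combination of cross-interaction integrals is in substance what is done in \cite{G2006}, but reproducing it would require verifying that the chosen coefficients make all diagonal contributions sign-definite, which is nontrivial work the paper simply cites; the bound \eqref{G_k estimate} then follows from Cauchy-Schwarz and the boundedness of the moment operators, as you say.

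Two technical points in your treatment of the nonlinear source need correction. First, the collision estimate you invoke, $|(\nu^{-1/2}\Gamma(g_1,g_2),h)|\lesssim\|g_1\|_\nu\|g_2\|_\nu$, is dimensionally off (the right side does not see $h$); the paper instead applies the pointwise-in-$x$ bound \eqref{nonlineares2} together with the exponential decay in $v$ of $\zeta$ to obtain $\norm{\left\langle\nabla^k\Gamma(f,f),\zeta\right\rangle}_{L^2}^2\lesssim\sum_{|\gamma_1|\le k}\norm{|\nabla^{|\gamma_1|} f|_2|\nabla^{k-|\gamma_1|}f|_2}_{L^2}^2$. Second, to estimate this quantity you cite Lemma \ref{1interpolation}, but that lemma concerns functions on $\r3_x$ alone; since $f$ lives on $\r3_x\times\r3_v$ the paper uses the mixed-variable Gagliardo--Nirenberg inequality of Lemma \ref{interpolation} together with Minkowski's inequality (Lemma \ref{Minkowski}) to exchange the orders of $L^p_x$ and $L^2_v$. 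With those two substitutions the estimate yields $\lesssim\delta^2\norm{\nabla^{k+1}f}_{L^2}^2$ (the paper's \eqref{4.16}), and the $\mathbf{P}f$ part of the right-hand side is absorbed into the left exactly as you indicate.
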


\begin{proof}
We represent $\mathbf{P}f$ as
\begin{equation}  \label{macro}
\mathbf{P}f =\left\{a (t,x)+b (t,x)\cdot v+c(t,x)|v|^2\right\}\sqrt{\mu}.
\end{equation}
Then for each $k=0,\dots,N-1$, it follows from Lemma 6.1 of \cite{G2006}
that there exists a constant $C>0$ such that
\begin{equation}  \label{macro micro}
\norm{\nabla^{k+1} \mathbf{P }f }_{L^2}^2 \le\frac{dG_k}{dt} +C\left\{\norm{\nabla^{k}\{%
\mathbf{I-P }\} f }_{L^2}^2+\norm{\nabla^{k+1}\{\mathbf{I-P }\} f}_{L^2}^2\right\} +C%
\norm{\left\langle\nabla^k\Gamma(f,f),\zeta\right\rangle}_{L^2}^2,
\end{equation}
where $G_k(t)$ is defined as
\begin{equation}  \label{G_k(t)}
\begin{split}
G_k(t)&=\sum_{|\gamma|=k}\int_{\mathbb{R}^3}\left\{\langle\{\mathbf{I-P }%
\}\partial^\gamma f,\zeta_{a}\rangle\cdot \nabla_x\partial^\gamma a +\{%
\mathbf{I-P}\}\partial^\gamma f
,\zeta_{ij}\rangle\cdot\partial_j\partial^\gamma b_i \right\} dx \\
&\quad+\sum_{|\gamma|=k}\int_{\mathbb{R}^3}\left\{\langle\{\mathbf{I-P }%
\}\partial^\gamma f ,\zeta_c\rangle\cdot\nabla_x\partial^\gamma c +
\partial^\gamma b\cdot\nabla_x\partial^\gamma a \right\} \,dx,
\end{split}%
\end{equation}
and $\zeta$, $\zeta_a(v)$, $\zeta_{i j} (v)$, and $\zeta_c(v)$ are some
fixed linear combinations of the basis
\begin{equation*}
[\sqrt{\mu},v_i\sqrt{\mu},v_iv_j\sqrt{\mu},v_i|v|^2\sqrt{\mu}],\ 1\le i,j\le
3.
\end{equation*}
The proof of \eqref{macro micro} is based on the use of the \textit{local
conservation laws} and \textit{macroscopic equations} which are derived from
the so called macro-micro decomposition.

We now estimate the nonlinear term in the right hand side of \eqref{macro
micro}. By the estimate \eqref{nonlineares2} of Lemma \ref{nonlinearcol1}
and the fact that $\zeta$ decays exponentially in $v$, we have
\begin{equation}  \label{positive f}
\norm{\left\langle\nabla^k\Gamma(f,f),\zeta\right\rangle}_{L^2}^2\lesssim\sum_{|%
\gamma|=k}\sum_{\gamma_1\le \gamma}\norm{ \langle\Gamma(\partial^{\gamma_1}
f,\partial^{\gamma-\gamma_1}f),\zeta\rangle }_{L^2}^2\lesssim
\sum_{|\gamma_1|\le k}\norm{|\nabla^{|\gamma_1|}
f|_2|\nabla^{k-|\gamma_1|}f|_2}_{L^2}^2.
\end{equation}
By H\"older's inequality, Minkowski's integral inequality \eqref{min es} of
Lemma \ref{Minkowski}, the Sobolev interpolation of Lemma \ref{interpolation}
and Young's inequality, we obtain
\begin{equation}  \label{N2 0}
\begin{split}
\norm{|\nabla^{|\gamma_1|} f|_2|\nabla^{k-|\gamma_1|}f|_2}_{L^2} &\lesssim %
\norm{\nabla^{|\gamma_1|} f}_{L^6_xL^2_v}\norm{ \nabla^{k-|\gamma_1|}f}%
_{L^3_xL^2_v}\lesssim \norm{\nabla^{|\gamma_1|} f}_{L^2_vL^6_x}\norm{
\nabla^{k-|\gamma_1|}f}_{L^2_vL^3_x} \\
&\lesssim \norm{f}_{L^2}^{1-\frac{|\gamma_1|+1}{k+1}}\norm{\nabla^{k+1}f}_{L^2}^{\frac{%
|\gamma_1|+1}{k+1}} \norm{\nabla^\alpha f}_{L^2}^{\frac{|\gamma_1|+1}{k+1}}%
\norm{\nabla^{k+1}f}_{L^2}^{1-\frac{|\gamma_1|+1}{k+1}} \\
&\lesssim \delta \norm{\nabla^{k+1}f}_{L^2}.
\end{split}%
\end{equation}
Here we have denoted $\alpha$ by
\begin{equation}
\begin{split}
&\frac{1}{3}-\frac{k-|\gamma_1|}{3}=\left(\frac{1}{2}-\frac{\alpha}{3}%
\right)\times \frac{|\gamma_1|+1}{k+1}+\left(\frac{1}{2}-\frac{k+1}{3}%
\right)\times\left(1-\frac{|\gamma_1|+1}{k+1}\right) \\
&\quad\Longrightarrow\alpha=\frac{k+1}{2(|\gamma_1|+1)}\le \frac{k+1}{2}\le
\frac{N}{2}.
\end{split}%
\end{equation}
Hence, we have
\begin{equation}  \label{4.16}
\norm{\left\langle\nabla^k\Gamma(f,f),\zeta\right\rangle}_{L^2}^2\lesssim\delta^2 %
\norm{\nabla^{k+1}f}_{L^2}^2.
\end{equation}
Plugging the estimates \eqref{4.16} into \eqref{macro micro}, since $\delta$
is small, we obtain \eqref{linear ener 2}.
\end{proof}

To conclude our energy estimates, we turn to the nonlinear term in the right
hand side of \eqref{Linear ener 1}. We shall derive the following two sets of nonlinear
estimates, depending on whether we assume the weighted norm of initial data.

\begin{lemma}
\label{nonlinear lemma 2} Let $N\ge 3$. If $\norm{f(t)}_{L^2_vH^{N}_x}%
\lesssim \delta$, then for $k=0,\dots,N-1$, we have
\begin{equation}  \label{nonlinear estimate 21}
\sum_{|\gamma_1|\le k}\norm{|\nabla^{|\gamma_1|} f|_2|\nabla^{k-|\gamma_1|}
f|_\nu}_{L^2}^2 \lesssim\delta^2 \left(\norm{\nabla^{k+1}f}_{L^2}^2+\sum_{1\le \ell \le
N}\norm{\nabla^{\ell} \{{\bf I- P}\}f}_\nu^2\right);
\end{equation}
and for $k=N$, we have
\begin{equation}  \label{nonlinear estimate 22}
\sum_{|\gamma_1|\le N}\norm{|\nabla^{|\gamma_1|} f|_2|\nabla^{N-|\gamma_1|}
f|_\nu}_{L^2}^2 \lesssim\delta^2 \left(\norm{\nabla^N f}_{L^2}^2+\sum_{1\le \ell \le N}%
\norm{\nabla^{\ell} \{{\bf I- P}\}f}_\nu^2\right).
\end{equation}
\end{lemma}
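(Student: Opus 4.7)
The plan is to perform the macro-micro decomposition $f=\mathbf{P}f+\{\mathbf{I-P}\}f$ on each of the two factors in $|\nabla^{|\gamma_1|}f|_2\,|\nabla^{k-|\gamma_1|}f|_\nu$, so every bilinear expression splits into four cross-products (PP, PM, MP, MM), and then to estimate each by combining H\"older's inequality in $x$, Minkowski's integral inequality (to exchange the $x$- and $v$-integrations), and the Sobolev--Gagliardo--Nirenberg template already used for \eqref{N2 0}. The structural fact that $\mathcal{N}$ is finite-dimensional with Gaussian $v$-decay yields $|\mathbf{P}g|_\nu\sim|\mathbf{P}g|_2\sim|(a,b,c)|$ via the representation \eqref{macro}, so any $\nu$-weight on a macroscopic factor may simply be dropped and $\mathbf{P}f$ is effectively a scalar function of $x$ multiplied by a fixed $v$-profile. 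The smallness hypothesis $\norm{f}_{L^2_vH^N_x}\lesssim\delta$ supplies $\norm{\nabla^m f}_{L^2}\lesssim\delta$ for every $0\le m\le N$, and this is what will ultimately produce the $\delta^2$ prefactor on the right-hand side.

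For the PP cross-term I would apply the Minkowski--Sobolev chain of \eqref{N2 0} directly to $|\nabla^{|\gamma_1|}\mathbf{P}f|_2\,|\nabla^{k-|\gamma_1|}\mathbf{P}f|_2$; this yields $\delta^2\norm{\nabla^{k+1}f}_{L^2}^2$ when $k\le N-1$. For $k=N$ the endpoint cases $|\gamma_1|\in\{0,N\}$ are handled by placing the undifferentiated factor in $L^\infty_x$ via $\||\mathbf{P}f|_2\|_{L^\infty_x}\lesssim\norm{f}_{L^2_vH^2_x}\lesssim\delta$, and intermediate $|\gamma_1|$ use the same template combined with the Gagliardo--Nirenberg interpolation $\norm{\nabla^m f}_{L^2}\lesssim\delta^{1-m/N}\norm{\nabla^N f}_{L^2}^{m/N}$, so that no $\nabla^{N+1}$-bound is ever required. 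For the PM, MP and MM cross-terms I would invoke the H\"older pairing $L^3_x$--$L^6_x$: since $\nu$ is $x$-independent, Sobolev's embedding $H^1_x\hookrightarrow L^6_x$ gives $\||\nabla^{k-|\gamma_1|}\{\mathbf{I-P}\}f|_\nu\|_{L^6_x}\lesssim\norm{\nabla^{k-|\gamma_1|+1}\{\mathbf{I-P}\}f}_\nu$, a dissipation norm at level $\ge 1$ that already sits on the right-hand side; the partner factor is placed in $L^3_x$ and interpolated by Gagliardo--Nirenberg between $\norm{f}_{L^2}$ and $\norm{\nabla^{k+1}f}_{L^2}$, each of size $\lesssim\delta$.

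The main obstacle is the sub-case $|\gamma_1|=k$ with $k-|\gamma_1|=0$, in which the $\nu$-weighted factor is the raw $|f|_\nu$ carrying no $x$-derivatives; this is particularly delicate when $k=N$, since the $L^3_x$--$L^6_x$ pairing above would then demand the unavailable $\nabla^{N+1}$-bound. My strategy is to split $|f|_\nu\lesssim|\mathbf{P}f|_2+|\{\mathbf{I-P}\}f|_\nu$: the macroscopic contribution is absorbed through $\|(a,b,c)\|_{L^\infty_x}\lesssim\|(a,b,c)\|_{H^2_x}\lesssim\delta$, while the microscopic piece is handled via the Gagliardo--Nirenberg inequality
\[
\|h\|_{L^\infty_x}\lesssim\|\nabla h\|_{L^2_x}^{1/2}\|\nabla^2 h\|_{L^2_x}^{1/2}\quad\text{in }\mathbb{R}^3_x
\]
applied to $h=|\{\mathbf{I-P}\}f|_\nu$, which gives
\[
\||\{\mathbf{I-P}\}f|_\nu\|_{L^\infty_x}^2\lesssim\norm{\nabla\{\mathbf{I-P}\}f}_\nu\,\norm{\nabla^2\{\mathbf{I-P}\}f}_\nu
\]
and crucially bypasses the level-$0$ norm $\norm{\{\mathbf{I-P}\}f}_\nu$, which is absent from the target right-hand side. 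Pairing this with $\norm{\nabla^k f}_{L^2}\le\delta$ in $L^2_x$ and using Cauchy's inequality then closes the estimate; the same $L^\infty$-trick resolves the symmetric endpoint in \eqref{nonlinear estimate 22}, and together with the $L^3_x$--$L^6_x$ scheme for interior indices this completes both \eqref{nonlinear estimate 21} and \eqref{nonlinear estimate 22}.
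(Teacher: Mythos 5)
Your decomposition into macroscopic/microscopic cross-terms, the reliance on Minkowski's inequality to exchange the $x$- and $v$-integrations, and the Gagliardo--Nirenberg template borrowed from \eqref{N2 0} match the structure of the paper's proof; in particular, you correctly isolate the dangerous sub-case $k-|\gamma_1|=0$ (especially at $k=N$), and the key trick of using an $L^\infty_x$ interpolation that \emph{avoids} the level-$0$ norm $\norm{\{\mathbf{I-P}\}f}_\nu$ is exactly what the argument requires. (The paper splits only the $\nu$-weighted factor and gets two cases rather than your four, but that is an inessential variation.)

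There is, however, a concrete flaw in the specific mechanism you propose for the $L^\infty_x$ bound of the microscopic piece: you apply the scalar Gagliardo--Nirenberg inequality $\norm{h}_{L^\infty_x}\lesssim\norm{\nabla h}_{L^2_x}^{1/2}\norm{\nabla^2 h}_{L^2_x}^{1/2}$ directly to $h(x)=|\{\mathbf{I-P}\}f(x,\cdot)|_\nu$ and then replace $\norm{\nabla_x^j h}_{L^2_x}$ by $\norm{\nabla^j\{\mathbf{I-P}\}f}_\nu$. For $j=1$ this substitution is legitimate: writing $g=\{\mathbf{I-P}\}f$, the triangle inequality gives the pointwise bound $|h(x)-h(y)|\le|g(x,\cdot)-g(y,\cdot)|_\nu$, hence $|\nabla_x h|\le|\nabla_x g|_\nu$ a.e. For $j=2$ it is not: $h=|g|_\nu$ is a square root of a $v$-integral and is not twice (weakly) differentiable on the set where it vanishes, and even away from that set the expression for $\nabla_x^2 h$ contains terms with $h$ in the denominator that are not controlled by $|\nabla_x^2 g|_\nu$. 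The inequality you write down,
\begin{equation*}
\bigl\Vert\,|\{\mathbf{I-P}\}f|_\nu\,\bigr\Vert_{L^\infty_x}^2\lesssim\norm{\nabla\{\mathbf{I-P}\}f}_\nu\,\norm{\nabla^2\{\mathbf{I-P}\}f}_\nu,
\end{equation*}
is in fact true, but one must obtain it in the opposite order: first pass from $\norm{\nu^{1/2}g}_{L^\infty_x L^2_v}$ to $\norm{\nu^{1/2}g}_{L^2_v L^\infty_x}$ by Minkowski's inequality (Lemma~\ref{Minkowski}), and only then apply the weighted $(x,v)$-version of Gagliardo--Nirenberg (Lemma~\ref{interpolation} with $w=\nu$) to the $L^\infty_x$ norm inside the $v$-integral. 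This is precisely the order of operations in the paper's proof and is the reason Lemmas~\ref{Minkowski} and~\ref{interpolation} are stated for a general weight $w(v)$. Replace your ``$\nabla^2$ of the norm'' step by this Minkowski-then-interpolate chain and the argument closes as claimed.
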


\begin{proof}
We first use the splitting $f=\mathbf{P}f+\{\mathbf{I-P}\}f$ to have
\begin{equation}  \label{444}
\begin{split}
\norm{|\nabla^{|\gamma_1|} f|_2|\nabla^{k-|\gamma_1|} f|_\nu }_{L^2}&\lesssim%
\norm{|\nabla^{|\gamma_1|} f|_2|\nabla^{k-|\gamma_1|} f|_2 }_{L^2} +%
\norm{|\nabla^{|\gamma_1|} f|_2|\nabla^{k-|\gamma_1|}\{{\bf I-P }\} f|_\nu}_{L^2}
\\
&:= J_{11}+J_{12}.
\end{split}%
\end{equation}
For the term $J_{11}$, if $k=0,\dots,N-1$, it has been already bounded in %
\eqref{N2 0} as
\begin{equation}  \label{N2 01}
J_{11}\lesssim \delta \norm{\nabla^{k+1}f}_{L^2};
\end{equation}
while for $k=N$, by the symmetry, we may assume $|\gamma_1|\le \frac{N}{2}$
to obtain, by Lemma \ref{Minkowski} and Lemma \ref{interpolation},
\begin{equation}  \label{temple}
\begin{split}
J_{11}&\lesssim\norm{\nabla^{|\gamma_1|}f}_{L^\infty_xL^2_v}%
\norm{\nabla^{N-|\gamma_1|}f}_{L^2}\lesssim\norm{\nabla^{|\gamma_1|}f}%
_{L^2_vL^\infty_x}\norm{\nabla^{N-|\gamma_1|}f}_{L^2} \\
&\lesssim \norm{\nabla^{\alpha}f}_{L^2}^{1-\frac{|\gamma_1|}{N}}\norm{\nabla^{N}f}_{L^2}%
^{\frac{|\gamma_1|}{N}}\norm{f}_{L^2}^{\frac{|\gamma_1|}{N}}\norm{ \nabla^Nf}_{L^2}^{1-%
\frac{|\gamma_1|}{N}} \\
&\lesssim \delta\norm{\nabla^{N}f}_{L^2},
\end{split}%
\end{equation}
where we have denoted $\alpha$ by
\begin{equation}
\begin{split}
&-\frac{|\gamma_1|}{3}=\left(\frac{1}{2}-\frac{\alpha}{3}\right)\times%
\left(1-\frac{|\gamma_1|}{N}\right)+\left(\frac{1}{2}-\frac{N}{3}%
\right)\times\frac{|\gamma_1|}{N} \\
&\quad\Longrightarrow\alpha=\frac{3N}{2(N-|\gamma_1|)}\le 3\ \text{ since }%
|\gamma_1|\le N/2.
\end{split}%
\end{equation}

Now for the term $J_{12}$, note that we can only bound the $\nu$-weighted
factor by the dissipation, so we can not pursue as before to adjust the
index. Notice that $\{\mathbf{I-P }\}f$ is always part of the dissipation.
If $|\gamma_1|\le k-2$ (if $k-2<0$, then it's nothing in this case, etc.)
and hence $k-|\gamma_1|\ge 2$, then we bound
\begin{equation}
J_{12}\le \norm{\nabla^{|\gamma_1|}f}_{L^\infty_xL^2_v}\norm{\nabla^{k-|%
\gamma_1|}\{{\bf I-P }\}f}_\nu \lesssim \delta \sum_{2\le \ell\le N}%
\norm{\nabla^\ell \{{\bf I-P }\}f}_\nu;
\end{equation}
and if $|\gamma_1|\ge k-1$ and hence $k-|\gamma_1|\le 1$, then we bound, by
Sobolev's inequality,
\begin{equation}
\begin{split}
J_{12}&\le \norm{\nabla^{|\gamma_1|}f}_{L^2}\norm{|\nabla^{k-|\gamma_1|}\{{\bf I-P
}\}f|_\nu}_{L^\infty_x} \le \norm{\nabla^{|\gamma_1|}f}_{L^2}\norm{\nu^{1/2}%
\nabla^{k-|\gamma_1|}\{{\bf I-P }\}f}_{L^2_vL^\infty_x} \\
&\lesssim \delta \sum_{1\le \ell\le 3}\norm{\nabla^\ell \{{\bf I-P }\}f}_\nu.
\end{split}%
\end{equation}
Hence, we have that for $k=0,\dots,N$,
\begin{equation}  \label{j12}
J_{12}\lesssim \delta\sum_{1\le \ell\le N}\norm{\nabla^\ell \{{\bf I-P }\}f}%
_\nu.
\end{equation}

Consequently, in light of the estimates \eqref{N2 01}, \eqref{temple} and %
\eqref{j12}, we then get \eqref{nonlinear estimate 21}--\eqref{nonlinear
estimate 22}.
\end{proof}

\begin{lemma}
\label{nonlinear lemma 3} Let $N\ge 3$. If $\norm{f(t)}_{L^2_vH^{N}_x}+%
\norm{f(t)}_\nu\lesssim \delta$, then for $k=0,\dots,N-1$, we have
\begin{equation}  \label{nonlinear estimate 31}
\sum_{|\gamma_1|\le k}\norm{|\nabla^{|\gamma_1|} f|_2|\nabla^{k-|\gamma_1|}
f|_\nu}_{L^2}^2 \lesssim\delta^2 \left(\norm{\nabla^{k+1}f}_{L^2}^2+ \norm{\nabla^k
\{{\bf I-P}\}f}_\nu^2 \right);
\end{equation}
and for $k=N$, we have
\begin{equation}  \label{nonlinear estimate 32}
\sum_{|\gamma_1|\le N}\norm{|\nabla^{|\gamma_1|} f|_2|\nabla^{N-|\gamma_1|}
f|_\nu}_{L^2}^2 \lesssim\delta^2 \norm{\nabla^N f}_\nu^2.
\end{equation}
\end{lemma}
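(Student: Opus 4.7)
The plan is to mirror the proof of Lemma \ref{nonlinear lemma 2} and to use the new hypothesis $\norm{f(t)}_\nu \lesssim \delta$ to replace the intermediate sum $\sum_{1 \le \ell \le N}\norm{\nabla^\ell \{\mathbf{I-P}\}f}_\nu^2$ appearing there by the single quantity $\norm{\nabla^k \{\mathbf{I-P}\}f}_\nu^2$ (respectively $\norm{\nabla^N f}_\nu^2$ when $k=N$). First I would split $f = \mathbf{P}f + \{\mathbf{I-P}\}f$ inside each of the two factors of the product. Whenever the $\nu$-weight falls on the hydrodynamic part, the pointwise comparison $|\mathbf{P}f|_\nu \lesssim |\mathbf{P}f|_2$ (valid because $\mathbf{P}f = (a + b\cdot v + c|v|^2)\sqrt{\mu}$ has a Gaussian velocity tail) reduces the term to an unweighted bilinear product already handled as $J_{11}$ in Lemma \ref{nonlinear lemma 2}, yielding a contribution $\lesssim \delta^2 \norm{\nabla^{k+1}f}_{L^2}^2$ for $k \le N-1$ and $\lesssim \delta^2\norm{\nabla^N f}_{L^2}^2 \le \delta^2\norm{\nabla^N f}_\nu^2$ for $k=N$.

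For the remaining terms, in which the $\nu$-weight acts on $\{\mathbf{I-P}\}f$, I would repeat the $J_{12}$ analysis of Lemma \ref{nonlinear lemma 2}: Hölder in $x$, Minkowski's exchange $L^\infty_x L^2_v \le L^2_v L^\infty_x$, and three-dimensional Sobolev embedding put the low-derivative factor in $L^\infty_x$ where it is bounded by $\delta$ via the regularity hypothesis $\norm{f}_{L^2_v H^N_x} \lesssim \delta$. The key new ingredient is the Gagliardo--Nirenberg interpolation applied to $\nu^{1/2}\{\mathbf{I-P}\}f$,
\begin{equation*}
\norm{\nabla^\ell \{\mathbf{I-P}\}f}_\nu \lesssim \norm{\{\mathbf{I-P}\}f}_\nu^{1-\ell/k}\norm{\nabla^k\{\mathbf{I-P}\}f}_\nu^{\ell/k}, \qquad 0 \le \ell \le k,
\end{equation*}
combined with $\norm{\{\mathbf{I-P}\}f}_\nu \le \norm{f}_\nu + \norm{\mathbf{P}f}_\nu \lesssim \norm{f}_\nu + \norm{f}_{L^2} \lesssim \delta$, which is where the new weighted hypothesis enters decisively. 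Young's inequality then absorbs each such product into the target $\delta^2(\norm{\nabla^{k+1}f}_{L^2}^2 + \norm{\nabla^k\{\mathbf{I-P}\}f}_\nu^2)$. For the case $k=N$ the analogous interpolation is performed on $\nu^{1/2}f$ directly, between $\norm{f}_\nu$ and $\norm{\nabla^N f}_\nu$; by symmetry it suffices to consider $|\gamma_1| \le N/2$, and the same Young argument yields the desired $\delta^2\norm{\nabla^N f}_\nu^2$.

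The main technical obstacle will be bookkeeping: the naive Sobolev embedding $L^\infty_x \hookleftarrow H^2_x$ produces intermediate derivative orders as large as $\ell = k-|\gamma_1|+2$, which can exceed $k$ when $|\gamma_1|$ is small, taking us outside the range where the interpolation above is available. To avoid this I would choose a finer Hölder splitting in $x$---for example $L^6_x \cdot L^3_x$ combined with $\norm{g}_{L^3_x} \lesssim \norm{g}_{L^2_x}^{1/2}\norm{\nabla g}_{L^2_x}^{1/2}$---so that at most one derivative is added to each side and every resulting $\ell$ remains in the admissible range $[0,k]$. With this adjustment, together with the pointwise reduction from Step~1, the nonlinear estimates close uniformly in $k$ and give the stated bounds in both the $k \le N-1$ and $k=N$ regimes.
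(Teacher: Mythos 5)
Your proposal is correct and follows essentially the same route as the paper: split $f=\mathbf{P}f+\{\mathbf{I-P}\}f$ in the $\nu$-weighted factor, absorb the $\mathbf{P}f$ piece back into the unweighted product $J_{11}$ already controlled in Lemma~\ref{nonlinear lemma 2}, and exploit the new bound $\norm{\{\mathbf{I-P}\}f}_\nu\lesssim\delta$ as the low endpoint of a weighted Gagliardo--Nirenberg interpolation for the remaining term. The two minor places where you deviate in detail — writing the interpolation in $L^2_x$ before passing to the $L^6_x\cdot L^3_x$ Hölder split, and for $k=N$ interpolating $\nu^{1/2}f$ directly rather than $\nu^{1/2}\{\mathbf{I-P}\}f$ — both lead to the same bound after using $\norm{\cdot}_{L^2}\le\norm{\cdot}_\nu$ and $\norm{\nabla^N\mathbf{P}f}_\nu\lesssim\norm{\nabla^Nf}_{L^2}$, so they are cosmetic; the paper's own proof handles the two subcases $|\gamma_1|=0$ versus $|\gamma_1|\ge 1$ (and, for $k=N$, $|\gamma_1|\ge N-1$ versus $|\gamma_1|\le N-2$) explicitly with $L^3\cdot L^6$ and $L^2\cdot L^\infty$ pairings, which is precisely the finer bookkeeping you anticipate.
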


\begin{proof}
Clearly, we only need to revise the estimates of the term $J_{12}$ defined
in \eqref{444}. Note that now we can also bound the $\nu$-weighted factor by
the energy, so we can pursue to adjust the index. For $k=0,\dots,N-1$, if $%
|\gamma_1|=0$, then we have
\begin{equation}
J_{12}\lesssim \norm{f}_{L^\infty_xL^2_v}\norm{\nabla^k\{{\bf I-P }\}f}%
_{\nu} \lesssim \delta \norm{\nabla^{k}\{{\bf I-P }\}f}_\nu;
\end{equation}
if $|\gamma_1|\ge 1$, then by Lemma \ref{Minkowski} and Lemma \ref%
{interpolation}, we have
\begin{equation}
\begin{split}
J_{12}&\lesssim \norm{\nabla^{|\gamma_1|} f}_{L^3_xL^2_v}\norm{\nu^{1/2}
\nabla^{k-|\gamma_1|}\{{\bf I-P }\}f}_{L^6_xL^2_v} \\
&\lesssim \norm{\nabla^{|\gamma_1|} f}_{L^2_vL^3_x}\norm{\nu^{1/2}
\nabla^{k-|\gamma_1|}\{{\bf I-P }\}f}_{L^2_vL^6_x} \\
&\lesssim \norm{\nabla^\alpha f}_{L^2}^{1-\frac{|\gamma_1|-1}{k}}%
\norm{\nabla^{k+1}f}_{L^2}^{\frac{|\gamma_1|-1}{k}} \norm{\{{\bf I-P }\}f}_\nu^{%
\frac{|\gamma_1|-1}{k}}\norm{\nabla^{k}\{{\bf I-P }\}f}_\nu^{1-\frac{%
|\gamma_1|-1}{k}} \\
&\lesssim \delta\left(\norm{\nabla^{k+1}f}_{L^2}+\norm{\nabla^{k}\{{\bf I-P }\}f}%
_\nu\right).
\end{split}%
\end{equation}
where we have denoted $\alpha$ by
\begin{equation}
\begin{split}
&\frac{1}{3}-\frac{|\gamma_1|}{3}= \left(\frac{1}{2}-\frac{\alpha}{3}%
\right)\times \left(1-\frac{|\gamma_1|-1}{k}\right)+\left(\frac{1}{2}-\frac{%
k+1}{3}\right)\times \frac{|\gamma_1|-1}{k} \\
&\quad\Longrightarrow \alpha=\frac{\frac{3}{2}k-(|\gamma_1|-1)}{%
k-(|\gamma_1|-1)}\le \frac{k}{2}+1\le \frac{N+1}{2}.
\end{split}%
\end{equation}
Hence, we have that for $k=0,\dots,N-1$,
\begin{equation}  \label{j1211}
J_{12}\lesssim \delta\left(\norm{\nabla^{k+1}f}_{L^2}+\norm{\nabla^{k}\{{\bf I-P
}\}f}_\nu\right).
\end{equation}
This together with \eqref{N2 01} implies \eqref{nonlinear estimate 31}.

Now for $k=N$, if $|\gamma_1|\ge N-1$, by Lemma \ref{Minkowski} and Lemma %
\ref{interpolation}, we estimate
\begin{equation}
\begin{split}
J_{12}&\le \norm{\nabla^{|\gamma_1|} f}_{L^2}\norm{\nu^{1/2}\nabla^{N-|\gamma_1|}%
\{{\bf I-P }\} f}_{L^\infty_xL^2_v} \le\norm{\nabla^{|\gamma_1|} f}_{L^2}%
\norm{\nu^{1/2}\nabla^{N-|\gamma_1|}\{{\bf I-P }\} f}_{L^2_vL^\infty_x} \\
&\lesssim \norm{\nabla^\alpha f}_{L^2}^{1-\frac{2|\gamma_1|-3}{2N}}\norm{\nabla^Nf}_{L^2}%
^{\frac{2|\gamma_1|-3}{2N}}\norm{\{{\bf I-P }\}f}_{\nu}^{\frac{2|\gamma_1|-3%
}{2N}}\norm{\nabla^N\{{\bf I-P }\}f}_{\nu}^{1-\frac{2|\gamma_1|-3}{2N}} \\
&\lesssim \delta\left(\norm{\nabla^Nf}_{L^2}+\norm{\nabla^N\{{\bf I-P }\}f}%
_{\nu}\right),
\end{split}%
\end{equation}
where we have denoted $\alpha$ by
\begin{equation}
\begin{split}
&|\gamma_1|=\alpha\times \left(1-\frac{2|\gamma_1|-3}{2N}\right)+N\times%
\frac{2|\gamma_1|-3}{2N} \\
&\quad\Longrightarrow \alpha=\frac{3N}{2(N-|\gamma_1|)+3}\le N;
\end{split}%
\end{equation}
and if $|\gamma_1|\le N-2$, by again Lemma \ref{Minkowski} and Lemma \ref%
{interpolation}, we estimate
\begin{equation}
\begin{split}
J_{12}&\le \norm{\nabla^{|\gamma_1|} f}_{L^\infty_xL^2_v}\norm{\nabla^{N-|%
\gamma_1|}\{{\bf I-P }\} f}_{\nu} \le\norm{\nabla^{|\gamma_1|} f}%
_{L^2_vL^\infty_x}\norm{\nabla^{N-|\gamma_1|}\{{\bf I-P }\} f}_{\nu} \\
&\lesssim \norm{\nabla^\alpha f}_{L^2}^{1-\frac{|\gamma_1|}{N}}\norm{\nabla^{N} f}_{L^2}%
^{\frac{|\gamma_1|}{N}} \norm{\{{\bf I-P }\} f}_\nu^{\frac{|\gamma_1|}{N}}%
\norm{\nabla^{N}\{{\bf I-P }\} f}_\nu^{\frac{N-|\gamma_1|}{N}} \\
&\lesssim \delta\left(\norm{\nabla^N f}_{L^2}+\norm{\nabla^N\{{\bf I-P }\}f}%
_{\nu}\right),
\end{split}%
\end{equation}
where we have denoted $\alpha$ by
\begin{equation}
\begin{split}
&-\frac{|\gamma_1|}{3}= \left(\frac{1}{2}-\frac{N}{3}\right)\times\frac{%
|\gamma_1|}{N}+\left(\frac{1}{2}-\frac{\alpha}{3}\right)\times\left(1-\frac{%
|\gamma_1|}{N}\right) \\
&\quad\Longrightarrow \alpha=\frac{3N}{2(N-|\gamma_1|)}\le \frac{3N}{4}\
\text{ since }|\gamma_1|\le N-2.
\end{split}%
\end{equation}
Hence, we have that for $k=N$,
\begin{equation}  \label{j1212}
J_{12}\lesssim \delta\left(\norm{\nabla^N f}_{L^2}+\norm{\nabla^N\{{\bf I-P }\}f}%
_\nu\right).
\end{equation}
This together with \eqref{temple} implies \eqref{nonlinear estimate 32}.
\end{proof}


\subsection{Energy evolution of of negative Sobolev norms}

\label{sec 4.2} 
In this subsection, we will derive the evolution of the negative Sobolev
norms of the solution. In order to estimate the nonlinear terms, we need to
restrict ourselves to that $s\in (0,3/2)$. We will establish the following
lemma.

\begin{lemma}
\label{lemma H-s} For $s\in (0, 1/2]$, we have
\begin{equation}  \label{H-s1}
\frac{d}{dt}\norm{\Lambda^{-s}f}_{L^2}^2+\sigma_0\norm{\Lambda^{-s}\{{\bf I-P}\}f}%
_\nu^2 \lesssim \norm{\{{\bf I-P}\}f }_\nu^2+ \norm{ \nabla f }_{L^2}^2+ \norm{
\nabla^2 f }_{L^2}^2;
\end{equation}
and for $s\in (1/2, 3/2)$, we have
\begin{equation}  \label{H-s2}
\frac{d}{dt}\norm{\Lambda^{-s}f}_{L^2}^2+\sigma_0\norm{\Lambda^{-s}\{{\bf I-P}\}f}%
_\nu^2\lesssim \norm{\{{\bf I-P}\}f }_\nu^2+\norm{ f }_{L^2}^{2s+1}\norm{ \nabla f
}_{L^2}^{3-2s}.
\end{equation}
\end{lemma}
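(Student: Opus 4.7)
The plan is to mimic the strategy of Lemma \ref{1Esle} for the compressible Navier--Stokes system, replacing the energy identity there with the one obtained from \eqref{perturbation}. First I would apply $\Lambda^{-s}$ (which acts only in $x$) to \eqref{perturbation} and take the $L^2_{x,v}$ inner product with $\Lambda^{-s}f$. The transport term vanishes by integration by parts in $x$, and since both $L$ and $\mathbf{I-P}$ act only on $v$ they commute with $\Lambda^{-s}$, so \eqref{coercive boltzmann} at once gives $(L\Lambda^{-s}f,\Lambda^{-s}f)\ge \sigma_0\|\Lambda^{-s}\{\mathbf{I-P}\}f\|_\nu^2$, which supplies the dissipative term on the left of \eqref{H-s1}--\eqref{H-s2}.

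Using that $\mathbf{P}\Gamma(f,f)=0$ (collision invariants) together with the self-adjointness of $\mathbf{I-P}$ in $L_v^2$, I would rewrite
\begin{equation*}
(\Lambda^{-s}\Gamma(f,f),\Lambda^{-s}f)=(\Lambda^{-s}\Gamma(f,f),\Lambda^{-s}\{\mathbf{I-P}\}f),
\end{equation*}
and Cauchy--Schwarz with weight $\nu^{\pm 1/2}$ followed by Young's inequality absorb half of $\sigma_0\|\Lambda^{-s}\{\mathbf{I-P}\}f\|_\nu^2$ back into the left-hand side. The task then reduces to bounding $\|\nu^{-1/2}\Lambda^{-s}\Gamma(f,f)\|_{L^2}^2$. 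Since $\Lambda^{-s}$ commutes with $\nu^{-1/2}$, applying Hardy--Littlewood--Sobolev (Lemma \ref{1Riesz}) in $x$ with $1/p=1/2+s/3$ and then Minkowski's inequality to swap $L_x^p$ and $L_v^2$ (valid because $p\le 2$) gives
\begin{equation*}
\|\nu^{-1/2}\Lambda^{-s}\Gamma(f,f)\|_{L^2}^2\lesssim \|\nu^{-1/2}\Gamma(f,f)\|_{L_x^p L_v^2}^2.
\end{equation*}
The constraint $s<3/2$ is exactly what makes $p\ge 1$, so the Riesz potential is well defined. Invoking the pointwise bound $|\nu^{-1/2}\Gamma(f,f)|_2\lesssim |f|_2|f|_\nu$ from Lemma \ref{nonlinearcol1} and H\"older in $x$ with exponents $(3/s,2)$ yields
\begin{equation*}
\|\nu^{-1/2}\Lambda^{-s}\Gamma(f,f)\|_{L^2}^2\lesssim \||f|_2\|_{L_x^{3/s}}^2\,\|f\|_\nu^2.
\end{equation*}

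The case split at $s=1/2$ arises at the interpolation step: Sobolev embedding and Minkowski give $\||f|_2\|_{L_x^{3/s}}\lesssim \||f|_2\|_{\dot H_x^{3/2-s}}$, and I interpolate between $\dot H_x^1$ and $\dot H_x^2$ when $s\in(0,1/2]$ (producing $\|\nabla f\|_{L^2}^{1/2+s}\|\nabla^2 f\|_{L^2}^{1/2-s}$) and between $L_x^2$ and $\dot H_x^1$ when $s\in(1/2,3/2)$ (producing $\|f\|_{L^2}^{s-1/2}\|\nabla f\|_{L^2}^{3/2-s}$). For the remaining $\|f\|_\nu^2$ factor I would use the splitting $\|f\|_\nu^2\lesssim \|f\|_{L^2}^2+\|\{\mathbf{I-P}\}f\|_\nu^2$, where $\|\mathbf{P}f\|_\nu\lesssim\|\mathbf{P}f\|_{L^2}$ by the exponential decay of $\sqrt{\mu}$ in $v$. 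For $s\in(0,1/2]$, Young's inequality $\|\nabla f\|^{1+2s}\|\nabla^2 f\|^{1-2s}\lesssim \|\nabla f\|^2+\|\nabla^2 f\|^2$ collapses the energy factor, and the a priori smallness $\|f\|_{L_v^2H_x^N}\lesssim\delta$ turns both $\|f\|_{L^2}^2$ and (a further copy of) $\|\nabla f\|^2+\|\nabla^2 f\|^2$ into harmless $\delta^2$ prefactors, yielding \eqref{H-s1}; for $s\in(1/2,3/2)$ the product $\|f\|_{L^2}^2\cdot \|f\|_{L^2}^{2s-1}\|\nabla f\|_{L^2}^{3-2s}=\|f\|_{L^2}^{2s+1}\|\nabla f\|_{L^2}^{3-2s}$ is already the right-hand side of \eqref{H-s2}, while the remaining $\|\{\mathbf{I-P}\}f\|_\nu^2$ piece acquires a factor $\|f\|_{L^2}^{2s-1}\|\nabla f\|_{L^2}^{3-2s}\lesssim\delta^2$ that is absorbed into the universal constant.

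The main obstacle I expect is managing the bilinear structure of $\Gamma$, which forces the velocity-weighted factor $\|f\|_\nu^2$ into the estimate. Unlike the Navier--Stokes situation of Lemma \ref{1Esle}, this factor is not a plain $L^2$ norm and must be decomposed into a $\mathbf{P}f$-part (feeding $\|f\|_{L^2}^{2s+1}$ on the right when $s>1/2$) and a dissipative $\{\mathbf{I-P}\}f$-part (tolerated on the right of \eqref{H-s1}--\eqref{H-s2} via $\|\{\mathbf{I-P}\}f\|_\nu^2$). The secondary delicate point is the Minkowski swap from $L_v^2L_x^p$ to $L_x^pL_v^2$, which requires $p\le 2$; combined with the Riesz-potential requirement $p\ge 1$, this is precisely what pins $s$ to $[0,3/2)$.
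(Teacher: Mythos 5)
Your proposal is correct and follows essentially the same route as the paper: the same energy identity from $\Lambda^{-s}$, the same use of collision invariants to reduce to $(\Lambda^{-s}\Gamma,\Lambda^{-s}\{\mathbf{I-P}\}f)$, Cauchy--Schwarz with weight $\nu^{\pm 1/2}$, the Hardy--Littlewood--Sobolev/Minkowski swap, the $|\nu^{-1/2}\Gamma|_2\lesssim|f|_2|f|_\nu$ estimate, H\"older with exponents $(3/s,2)$, the split $\|f\|_\nu\lesssim\|\{\mathbf{I-P}\}f\|_\nu+\|f\|_{L^2}$, and the two Gagliardo--Nirenberg interpolations at the $s=1/2$ threshold.
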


\begin{proof}
Applying $\Lambda^{-s}$ to \eqref{perturbation}, and then taking the $L^2$
inner product with $\Lambda^{-s}f$, together with the collision invariant
property and Cauchy's inequality, we have
\begin{equation}  \label{Hs3}
\begin{split}
\frac{1}{2}\frac{d}{dt}\norm{\Lambda^{-s}f}_{L^2}^2 + \sigma_0\norm{\Lambda^{-s}%
\{{\bf I-P}\} f}_\nu^2 &\le (\Lambda^{-s}\Gamma(f,f), \Lambda^{-s}\{\mathbf{%
I-P}\}f) \\
& \le C\norm{\Lambda^{-s}\left(\nu^{-\frac{1}{2}}\Gamma(f,f)\right)}_{L^2}^2+\frac{%
\sigma_0}{2}\norm{\Lambda^{-s}\{{\bf I-P}\}f}_\nu^2.
\end{split}%
\end{equation}

To estimate the right hand side of \eqref{Hs3}, since $0<s<3/2$, we let $%
1<p<2$ to be with ${1}/{2}+{s}/{3}={1}/{p}$. By the estimate \eqref{1Riesz
es} of Riesz potential in Lemma \ref{1Riesz}, Minkowski's integral
inequality \eqref{min es} of Lemma \ref{Minkowski}, and the estimate %
\eqref{nonlineares3} (with $\eta=1/2$) of Lemma \ref{nonlinearcol1},
together with H\"older's inequality, we obtain
\begin{equation}  \label{Hs es 5}
\begin{split}
&\norm{\Lambda^{-s}\left(\nu^{-\frac{1}{2}}\Gamma(f,f)\right)}_{L^2} = %
\norm{\Lambda^{-s}\left(\nu^{-\frac{1}{2}}\Gamma(f,f)\right)}_{L_v^2L_x^2}^2
\\
&\quad\lesssim \norm{\nu^{-\frac{1}{2}}\Gamma(f,f)}_{L_v^2L_x^{p}} \le %
\norm{\nu^{-\frac{1}{2}}\Gamma(f,f)}_{L_x^{p}L_v^2} \\
&\quad\lesssim \norm{|f|_2|f |_\nu}_{L_x^{p}} \le \norm{f}_{L_x^\frac{3}{s}%
L_v^2}\norm{f }_\nu \\
&\quad\le \norm{f}_{L_v^2L_x^\frac{3}{s}}\left(\norm{\{{\bf I- P}\}f }_\nu+%
\norm{f}_{L^2}\right).
\end{split}%
\end{equation}
We bound the first term in \eqref{Hs es 5} as, since $3/s>2$, by Sobolev's
inequality,
\begin{equation}  \label{Hs es 6}
\begin{split}
\norm{f}_{L_v^2L_x^\frac{3}{s}}\norm{\{{\bf I- P}\}f }_\nu&\lesssim \norm{f}%
_{L_v^2H_x^2} \norm{\{{\bf I-P}\}f }_\nu \lesssim \delta \norm{\{{\bf
I-P}\}f }_\nu.
\end{split}%
\end{equation}
While for the other term in \eqref{Hs es 5}, we shall separate the estimates
according the value of $s$. If $0< s\le 1/2$, then $3/s\ge 6$, we use the
Sobolev interpolation and Young's inequality to have
\begin{equation}  \label{Hs es 7}
\norm{f}_{L_v^2L_x^\frac{3}{s}}\norm{f}_{L^2} \le \norm{ \nabla f }_{L^2}^{1+s/2}\norm{
\nabla^2 f }_{L^2}^{1-s/2}\norm{ f }_{L^2} \lesssim \delta \left( \norm{ \nabla f }_{L^2}+ %
\norm{\nabla^2 f}_{L^2}\right);
\end{equation}
and if $s\in(1/2,3/2)$, then $2<3/s<6$, we use the (different) Sobolev
interpolation and H\"older's inequality to have
\begin{equation}  \label{Hs es 8}
\norm{f}_{L_v^2L_x^\frac{3}{s}}\norm{f}_{L^2}\lesssim \norm{ f }_{L^2}^{s-1/2}%
\norm{\nabla f }_{L^2}^{3/2-s} \norm{ f }_{L^2}=\norm{ f }_{L^2}^{s+1/2}\norm{\nabla f }_{L^2}%
^{3/2-s}.
\end{equation}

Consequently, in light of \eqref{Hs es 5}--\eqref{Hs es 8}, we deduce from %
\eqref{Hs3} that \eqref{H-s1} holds for $s\in(0,1/2]$ and that \eqref{H-s2}
holds for $s\in(1/2,3/2)$.
\end{proof}


\subsection{Energy evolution of the microscopic part $\{\mathbf{I-P}\}f$}

\label{sec 4.3} 
In this subsection, we will derive the energy evolution of the weighted norm
of the microscopic part. With the help of this weighted norm, we can prove a
further estimates of the microscopic part which allows us to prove the
faster decay of it. The following lemma provides the energy evolution for $\{%
\mathbf{I-P}\}f$.

\begin{lemma}
\label{lemma micro 2} If $\norm{f(t)}_{L^2_vH^{N}_x}\lesssim \delta$, then
we have
\begin{equation}  \label{micro estimate 1}
\frac{d}{dt}\norm{\{\mathbf{I-P }\}f }_{L^2}^2+\sigma_0\norm{\{\mathbf{I-P }\}f }_\nu
^2\lesssim \norm{\nabla f}_{L^2}^2.
\end{equation}
and
\begin{equation}  \label{micro estimate 2}
\frac{d}{dt}\norm{\{\mathbf{I-P }\}f }_\nu^2+\frac{1}{2}\norm{\nu\{\mathbf{I-P }\}f
}_{L^2}^2\lesssim \norm{\nabla f}_{L^2}^2+\norm{\{\mathbf{I-P }\}f }_\nu^2.
\end{equation}
\end{lemma}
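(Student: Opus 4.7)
The plan is to work with the equation satisfied by the microscopic component. Applying the projection $\{\mathbf{I-P}\}$ to \eqref{perturbation} and using that $\partial_t$ commutes with $\{\mathbf{I-P}\}$, that $L\mathbf{P}f=0$, and that $\Gamma(f,f)\in\mathcal{N}^\perp$, I obtain
\begin{equation*}
\partial_t\{\mathbf{I-P}\}f + L\{\mathbf{I-P}\}f = -\{\mathbf{I-P}\}(v\cdot\nabla_x f) + \Gamma(f,f).
\end{equation*}

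For \eqref{micro estimate 1}, I would take the $L^2_{x,v}$ inner product of this equation with $\{\mathbf{I-P}\}f$. The coercivity \eqref{lineares1} produces $\sigma_0\norm{\{\mathbf{I-P}\}f}_\nu^2$ on the left. The transport contribution is analyzed by first observing that the microscopic self-coupling vanishes,
\begin{equation*}
(v\cdot\nabla_x\{\mathbf{I-P}\}f,\{\mathbf{I-P}\}f)=\tfrac{1}{2}\int v\cdot\nabla_x|\{\mathbf{I-P}\}f|^2\,dx\,dv=0,
\end{equation*}
since $v$ is $x$-independent, while the remaining pieces involve $\mathbf{P}f$ (or the projection $\mathbf{P}$ applied to $v\cdot\nabla_x f$), whose Gaussian structure in $v$ makes every velocity moment equivalent to $\norm{\nabla f}_{L^2}$; Cauchy--Schwarz and Young then bound them by $\varepsilon\norm{\{\mathbf{I-P}\}f}_\nu^2 + C_\varepsilon\norm{\nabla f}_{L^2}^2$. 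For the nonlinear term I would use Lemma \ref{nonlinearcol1} in the form $|(\Gamma(f,f),\{\mathbf{I-P}\}f)|\lesssim \norm{|f|_2|f|_\nu}_{L^2_x}\norm{\{\mathbf{I-P}\}f}_\nu$, together with the bound $|f|_\nu\lesssim |f|_2+|\{\mathbf{I-P}\}f|_\nu$ (since $\mathbf{P}f$ decays like a Gaussian, $|\mathbf{P}f|_\nu\lesssim|\mathbf{P}f|_2\lesssim|f|_2$) and a Gagliardo--Nirenberg interpolation of the form $\norm{f}_{L^2_vL^4_x}^4\lesssim \norm{f}_{L^2}\norm{\nabla f}_{L^2}^3$ to obtain a bound of the shape $\varepsilon\norm{\{\mathbf{I-P}\}f}_\nu^2+C\delta^2\norm{\nabla f}_{L^2}^2$. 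Absorbing the $\varepsilon$-terms into the LHS yields \eqref{micro estimate 1}.

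For \eqref{micro estimate 2}, I would test the same $\{\mathbf{I-P}\}f$ equation against $\nu\{\mathbf{I-P}\}f$, producing $\frac{1}{2}\frac{d}{dt}\norm{\{\mathbf{I-P}\}f}_\nu^2$ on the left. The crucial ingredient here is the enhanced coercivity of $L$ in the $\nu$-weighted pairing: writing the hard-sphere linearized collision operator in Grad's form $L=\nu I-K$, one has
\begin{equation*}
(Lg,\nu g)=\norm{\nu g}_{L^2}^2-(Kg,\nu g)\geq \tfrac{3}{4}\norm{\nu g}_{L^2}^2-C\norm{g}_\nu^2,
\end{equation*}
using $\norm{Kg}_{L^2}\lesssim\norm{g}_\nu$ (a standard estimate for the compact operator $K$) and Young's inequality. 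The transport term is treated in the same spirit as in \eqref{micro estimate 1}: the self-coupling $(v\cdot\nabla_x\{\mathbf{I-P}\}f,\nu\{\mathbf{I-P}\}f)$ once more vanishes by integration by parts in $x$ (because $\nu$ does not depend on $x$), and the macroscopic pieces together with the $\mathbf{P}(\nu\{\mathbf{I-P}\}f)$ contribution from the projection are bounded by Cauchy--Schwarz and Gaussian moments by $\eta\norm{\nu\{\mathbf{I-P}\}f}_{L^2}^2+C_\eta(\norm{\nabla f}_{L^2}^2+\norm{\{\mathbf{I-P}\}f}_\nu^2)$. The nonlinear term $(\Gamma(f,f),\nu\{\mathbf{I-P}\}f)$ is handled by Cauchy--Schwarz together with suitable weighted bilinear estimates for $\Gamma$ and smallness; choosing $\eta$ and the absorbed constants small enough closes the argument.

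The principal obstacle is establishing the enhanced coercivity $(Lg,\nu g)\geq\frac{3}{4}\norm{\nu g}_{L^2}^2-C\norm{g}_\nu^2$ and controlling $(\Gamma(f,f),\nu\{\mathbf{I-P}\}f)$ without generating an uncontrollable $\nu^{3/2}$ weight on the microscopic factor; both rely on the Grad decomposition $L=\nu I-K$ with $K$ smoothing in $v$, a fact specific to the hard-sphere cross-section in \eqref{Boltzmann operator}. Once these are in place, the hypothesis $\norm{f}_{L^2_v H^N_x}\lesssim\delta$ closes every nonlinear contribution with a small prefactor.
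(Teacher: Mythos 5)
Your proposal follows essentially the same route as the paper: apply $\{\mathbf{I-P}\}$ to \eqref{perturbation}, test the resulting microscopic equation against $\{\mathbf{I-P}\}f$ (for \eqref{micro estimate 1}) and against $\nu\{\mathbf{I-P}\}f$ (for \eqref{micro estimate 2}), invoke the coercivity inequalities \eqref{lineares1} and \eqref{lineares2} respectively, observe that the transport self-coupling vanishes under integration by parts in $x$ and that the macroscopic remnants are controlled by $\norm{\nabla f}_{L^2}$ thanks to the Gaussian decay of $\mathbf{P}$-range functions, and close the nonlinear term with Lemma \ref{nonlinearcol1} plus Sobolev and smallness. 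The only noticeable difference is cosmetic: you sketch a proof of the $\nu$-weighted coercivity via Grad's splitting $L=\nu I-K$, whereas the paper simply cites \eqref{lineares2} (Lemma \ref{linearcol1}, with reference to Guo 2006, Lemma 3.3); and for the nonlinearity you propose an $L^4_x$ Gagliardo--Nirenberg interpolation where the paper uses the slightly more direct $\norm{f}_{L^3_xL^2_v}\norm{f}_{L^6_xL^2_v}\lesssim \delta\norm{\nabla f}_{L^2}$. Both choices are valid and yield the stated estimates.
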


\begin{proof}
We only prove \eqref{micro estimate 2}, but the proof of \eqref{micro
estimate 1} is similar. Applying the projection $\{\mathbf{I-P }\}$ to %
\eqref{perturbation}, we obtain
\begin{equation}  \label{micro equation}
\partial_t\{\mathbf{I-P }\}f + v\cdot\nabla_x\{\mathbf{I-P }\} f + L \{%
\mathbf{I-P }\}f = \Gamma(f ,f ) - v\cdot\nabla_x\mathbf{P }f + \mathbf{P }%
(v\cdot\nabla_xf ).
\end{equation}
Taking the $L^2$ inner product of \eqref{micro equation} with $\nu \{\mathbf{%
I-P }\}f $, we have
\begin{equation}  \label{ME20}
\begin{split}
&\frac{1}{2}\frac{d}{dt}\|\{\mathbf{I-P }\}f \|_\nu^2 + (\nu L \{\mathbf{I-P
}\}f ,\{\mathbf{I-P }\}f) \\
&\quad= (\Gamma(f ,f ),\nu\{\mathbf{I-P }\}f ) + (-v\cdot\nabla_x\mathbf{P }%
f +\mathbf{P }(v\cdot\nabla_xf ),\nu\{\mathbf{I-P }\}f ).
\end{split}%
\end{equation}
The estimate \eqref{lineares2} of Lemma \ref{linearcol1} implies
\begin{equation}  \label{ME21}
(\nu L \{\mathbf{I-P }\}f ,\{\mathbf{I-P }\}f )\ge \frac{1}{2}\norm{\nu\{\mathbf{%
I-P }\}f }_{L^2}^2-C\|\{\mathbf{I-P }\}f \|_\nu^2.
\end{equation}
While we use H\"older's inequality, the estimate \eqref{nonlineares3} (with $%
\eta=0$) of Lemma \ref{nonlinearcol1} and Sobolev's inequality to bound
\begin{equation}  \label{ME22}
\begin{split}
(\Gamma(f ,f ),\nu\{\mathbf{I-P }\}f ) &\le C\int_{\mathbb{R}_x^3}|\nu f
|_2|f |_2|\nu\{\mathbf{I-P }\}f |_2\,dx \\
&\le C\int_{\mathbb{R}_x^3}\left\{\nu \{\mathbf{I-P }\}f |_2+|\mathbf{P } f
|_2\right\}|f |_2|\nu\{\mathbf{I-P }\}f |_2\,dx \\
&\le C \norm{f }_{L^\infty_xL^2_v}\norm{\nu \{\mathbf{I-P }\}f}_{L^2}^2+\norm{f}%
_{L^3_xL^2_v}\norm{f}_{L^6_xL^2_v}\norm{\nu\{\mathbf{I-P }\}f}_{L^2} \\
&\le C\delta\left(\norm{\nu\{\mathbf{I-P }\}f }_{L^2}^2+ \norm{\nabla f}_{L^2}^2\right),
\end{split}%
\end{equation}
On the other hand, by the direct computation we can bound the last two terms
in \eqref{ME20} by
\begin{equation}  \label{ME23}
(-v\cdot\nabla_x\mathbf{P }f + \mathbf{P }(v\cdot\nabla_xf ),\nu\{\mathbf{%
I-P }\}f)\le C\norm{\nabla f}_{L^2}^2+\frac{1}{8}\norm{\nu\{\mathbf{I-P }\}f }_{L^2}^2.
\end{equation}
Hence, plugging \eqref{ME21}--\eqref{ME23} into \eqref{ME20}, since $\delta$
is small, we obtain \eqref{micro estimate 2}.
\end{proof}

By \eqref{micro estimate 2}, we know that if $\norm{f_0}_\nu$ is small, then
$\norm{f(t)}_\nu$ is small. With the help of this weighted bound, we can
prove the following energy evolution for $\nabla^k\{\mathbf{I-P }\}f,\
k=1,\dots,N-2$.

\begin{lemma}
\label{lemma micro 3} If $\norm{f(t)}_{L^2_vH^{N}_x}+\norm{f(t)}%
_{L^2_\nu}\lesssim \delta$, then for $k=0,\dots,N-2$, we have
\begin{equation}  \label{micro estimate 3}
\frac{d}{dt}\norm{\nabla^k\{\mathbf{I-P }\}f }_{L^2}^2+\sigma_0\norm{\nabla^k\{\mathbf{%
I-P }\}f }_\nu^2 \lesssim \norm{\nabla^{k+1} f}_{L^2}^2.
\end{equation}
\end{lemma}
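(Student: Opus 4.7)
The strategy is to mimic the proof of \eqref{micro estimate 1} in Lemma \ref{lemma micro 2}, but applied to the equation obtained by differentiating \eqref{micro equation} $k$ times in $x$. Since $\nabla^k = \nabla_x^k$ commutes with the velocity projection $\{\mathbf{I-P}\}$, with the transport operator $v\cdot\nabla_x$, and with $L$, applying $\nabla^k$ to \eqref{micro equation} gives
\begin{equation*}
\partial_t\nabla^k\{\mathbf{I-P}\}f + v\cdot\nabla_x\nabla^k\{\mathbf{I-P}\}f + L\nabla^k\{\mathbf{I-P}\}f = \nabla^k\Gamma(f,f) - \nabla^k\bigl(v\cdot\nabla_x\mathbf{P}f\bigr) + \nabla^k\mathbf{P}(v\cdot\nabla_x f).
\end{equation*}
Taking the $L^2_{x,v}$ inner product with $\nabla^k\{\mathbf{I-P}\}f$, the transport term vanishes by integration by parts in $x$, and Lemma \ref{linearcol1} provides the coercivity $(L\nabla^k\{\mathbf{I-P}\}f,\nabla^k\{\mathbf{I-P}\}f)\ge\sigma_0\|\nabla^k\{\mathbf{I-P}\}f\|_\nu^2$ which yields the desired dissipation on the left-hand side.

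Next I would estimate the three forcing terms. For the nonlinear term, I would use the collision-invariant trick $(\nabla^k\Gamma(f,f),\nabla^k\mathbf{P}f)=0$ to reduce to an inner product against $\nabla^k\{\mathbf{I-P}\}f$, then apply the non-symmetric estimate \eqref{nonlineares3} of Lemma \ref{nonlinearcol1} (with $\eta=1/2$) followed by Cauchy's inequality to bound it, up to absorbable terms, by $C\sum_{|\gamma_1|\le k}\bigl\||\nabla^{|\gamma_1|}f|_2|\nabla^{k-|\gamma_1|}f|_\nu\bigr\|_{L^2}^2$. This is precisely the quantity controlled by \eqref{nonlinear estimate 31} of Lemma \ref{nonlinear lemma 3}, which under the hypothesis $\|f\|_{L^2_vH^N_x}+\|f\|_\nu\lesssim\delta$ gives the bound $\delta^2\bigl(\|\nabla^{k+1}f\|_{L^2}^2+\|\nabla^k\{\mathbf{I-P}\}f\|_\nu^2\bigr)$ for $k=0,\dots,N-1$, which in particular covers $k=0,\dots,N-2$.

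For the two remaining macroscopic source terms, I would expand $\mathbf{P}f$ via \eqref{macro} so that both $\nabla^k(v\cdot\nabla_x\mathbf{P}f)$ and $\nabla^k\mathbf{P}(v\cdot\nabla_x f)$ take the form of polynomials in $v$ multiplied by $\sqrt{\mu}$ and by $\nabla^{k+1}$ applied to the velocity moments of $f$. Since the $v$-weights are absorbed by the Maxwellian factor, integrating against $\nabla^k\{\mathbf{I-P}\}f$ and applying Cauchy's inequality produces a bound of the form $C\|\nabla^{k+1}f\|_{L^2}^2+\varepsilon\|\nabla^k\{\mathbf{I-P}\}f\|_\nu^2$ for any small $\varepsilon>0$. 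Combining everything and choosing $\delta$ and $\varepsilon$ sufficiently small so that the $\|\nabla^k\{\mathbf{I-P}\}f\|_\nu^2$ remainders from the nonlinear and macroscopic bounds are absorbed into the coercive dissipation on the left yields \eqref{micro estimate 3}.

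The main obstacle is the nonlinear estimate, which is delicate because $\nabla^k\{\mathbf{I-P}\}f$ alone does not provide enough dissipation to absorb the naive product structure $|\nabla^{|\gamma_1|}f|_2|\nabla^{k-|\gamma_1|}f|_\nu$; the Sobolev interpolations needed to trade derivative counts between the two factors are precisely what the assumption on the velocity-weighted norm $\|f\|_\nu$ allows us to do, as encoded in Lemma \ref{nonlinear lemma 3}. Once that lemma is invoked, the remaining steps are routine.
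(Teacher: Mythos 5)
Your proposal is correct and follows essentially the same route as the paper: apply $\nabla^k$ to the microscopic equation \eqref{micro equation}, pair with $\nabla^k\{\mathbf{I-P}\}f$, use the coercivity \eqref{lineares1}, control the nonlinear term via Lemma \ref{nonlinear lemma 3} (which is exactly where the weighted-norm hypothesis enters), and bound the two macroscopic source terms by $C\|\nabla^{k+1}f\|_{L^2}^2$ plus an absorbable multiple of $\|\nabla^k\{\mathbf{I-P}\}f\|_\nu^2$. One small remark: the collision-invariant step you mention is not actually needed here, since the test function is already $\nabla^k\{\mathbf{I-P}\}f$ from the outset; otherwise the argument matches the paper's.
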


\begin{proof}
Applying $\nabla^k$ with $k=0,\dots,N-2$ to \eqref{micro equation} and then
taking the $L^2$ inner product with $\nabla^k\{\mathbf{I-P }\}f $, we have
\begin{equation}  \label{ME30}
\begin{split}
&\frac{1}{2}\frac{d}{dt}\norm{\nabla^k\{\mathbf{I-P }\}f}_{L^2}^2 +
\sigma_0\norm{\nabla^k\{\mathbf{I-P }\}f }_\nu^2 \\
&\quad\le \left(\nabla^k\Gamma(f ,f ),\nabla^k\{\mathbf{I-P }\}f \right) +
\left(-v\cdot\nabla_x\nabla^k\mathbf{P }f +\mathbf{P }(v\cdot\nabla_x%
\nabla^kf ),\nabla^k\{\mathbf{I-P }\}f \right).
\end{split}%
\end{equation}
By the estimate \eqref{nonlinear estimate 31} of Lemma \ref{nonlinear lemma
3}, we may bound
\begin{equation}  \label{ME32}
\left(\nabla^k\Gamma(f ,f ),\nabla^k\{\mathbf{I-P }\}f \right) \le C\delta^2 %
\norm{\nabla^{k+1}f}_{L^2}^2+\left(C\delta^2+\frac{\sigma_0}{8}\right)%
\norm{\nabla^k \{{\bf I-P}\}f}_\nu^2.
\end{equation}
By the direct computation we can bound the last two terms in \eqref{ME30} by
\begin{equation}  \label{ME33}
\left(-v\cdot\nabla_x\nabla^k\mathbf{P }f + \mathbf{P }(v\cdot\nabla_x%
\nabla^k f ),\nabla^k\{\mathbf{I-P }\}f\right)\le C\norm{\nabla^{k+1} f}_{L^2}^2+%
\frac{\sigma_0}{4}\norm{ \nabla^k\{\mathbf{I-P }\}f }_{L^2}^2.
\end{equation}
Hence, plugging \eqref{ME32}--\eqref{ME33} into \eqref{ME30}, we obtain %
\eqref{micro estimate 3}.
\end{proof}


\subsection{Proof of Theorems \protect\ref{theorem1}.}
In this subsection, we will combine all the energy estimates that we have derived in the previous three subsections to prove Theorem \ref{theorem1}.

We let $N\geq 3$ and then assume the a priori estimates
\begin{equation}
\norm{f(t)}_{L_{v}^{2}H_{x}^{N}}\lesssim \delta .  \label{a priori}
\end{equation}%
Let $0\leq \ell \leq N-1$. For from $k=\ell $ to $N-1$ we bound the
righthand side of \eqref{Linear ener 1} of Lemma \ref{lemma linear 1} by the
estimate \eqref{nonlinear estimate 21} of Lemma \ref{nonlinear lemma 2} and
then sum up the resulting estimates, by changing the index, we obtain
\begin{equation}
\begin{split}
& \frac{d}{dt}\sum_{\ell \leq k\leq N-1}\norm{\nabla^k f}_{L^2}^{2}+C\sum_{\ell
\leq k\leq N-1}\norm{\nabla^k\{{\bf I-P}\} f}_{\nu }^{2} \\
& \quad \leq C\delta ^{2}\left( \sum_{\ell \leq k\leq N-1}\norm{\nabla^{k+1}
f}_{L^2}^{2}+\sum_{1\leq k\leq N}\norm{\nabla^{k}\{{\bf I- P}\}f}_{\nu
}^{2}\right) .
\end{split}
\label{energy estimate 1}
\end{equation}%
For $k=N$ we bound the righthand side of \eqref{Linear ener 1} of Lemma \ref%
{lemma linear 1} by the estimates \eqref{nonlinear estimate 22} of Lemma \ref%
{nonlinear lemma 2}, and then add the resulting estimate with \eqref{energy
estimate 1}, by changing the index, to deduce that there exist constants $%
C_{2},C_{3}>0$ such that
\begin{equation}
\begin{split}
& \frac{d}{dt}\sum_{\ell \leq k\leq N}\norm{\nabla^k f}_{L^2}^{2}+C_{2}\sum_{\ell
\leq k\leq N}\norm{\nabla^k\{{\bf I-P}\} f}_{\nu }^{2} \\
& \quad \leq C_{3}\delta ^{2}\left( \sum_{\ell +1\leq k\leq N}%
\norm{\nabla^{k} f}_{L^2}^{2}+\sum_{1\leq k\leq N}\norm{\nabla^{k}\{{\bf I- P}\}f}%
_{\nu }^{2}\right) .
\end{split}
\label{energy estimate 2}
\end{equation}%
On the other hand, we sum up the estimates \eqref{linear ener 2} of Lemma %
\ref{lemma linear 2} from $k=\ell $ to $N-1$, by changing the index, to
obtain
\begin{equation}
\frac{d}{dt}\sum_{\ell \leq k\leq N-1}G_{k}+\sum_{\ell +1\leq k\leq N}\norm{
\nabla ^{k}\mathbf{P}f}_{L^2} ^{2}\leq C_{1}\sum_{\ell \leq k\leq N}\norm{
\nabla ^{k}\{\mathbf{I-P}\}f}_{L^2} ^{2}.  \label{energy estimate 3}
\end{equation}%
Then, multiplying \eqref{energy estimate 3} by a small number $\beta >0$ and
then adding the resulting inequality with \eqref{energy estimate 2}, we
obtain
\begin{equation}
\begin{split}
& \frac{d}{dt}\left( \sum_{\ell \leq k\leq N}\norm{\nabla^k f}_{L^2}^{2}+\beta
\sum_{\ell \leq k\leq N-1}G_{k}\right)  \\
& \quad +(C_{2}-C_{1}\beta )\sum_{\ell \leq k\leq N}\norm{\nabla^k\{{\bf
I-P}\} f}_{\nu }^{2}+\beta \sum_{\ell +1\leq k\leq N}\norm{ \nabla ^{k}%
\mathbf{P}f}_{L^2} ^{2} \\
& \qquad \leq C_{3}\delta ^{2}\left( \sum_{\ell +1\leq k\leq N}%
\norm{\nabla^{k} f}_{L^2}^{2}+\sum_{1\leq k\leq N}\norm{\nabla^{k}\{{\bf I- P}\}f}%
_{\nu }^{2}\right) .
\end{split}
\label{energy estimate 4}
\end{equation}

We define $\mathcal{E}_\ell(t)$ to be the expression under the time
derivative in \eqref{energy estimate 4}. We may now take $\beta$ to be
sufficiently small so that $(C_2-C_1\beta)>0$ and that $\mathcal{E}_\ell(t)$
is equivalent to $\norm{\nabla^\ell f(t)}_{L^2_vH_x^{N-\ell}}^2$ due to the
fact \eqref{G_k estimate}. On the other hand, since $\beta$ is fixed and $%
\delta$ is small, we can then absorb the first term in the right hand side
of \eqref{energy estimate 4} to have that for some constant $C_4>0$, by
adjusting the constant in the definition of $\mathcal{E}_\ell(t)$,
\begin{equation}  \label{energy estimate 5}
\frac{d}{dt}\mathcal{E}_\ell + \norm{\nabla^\ell\{{\bf I-P}\} f}_\nu^2+
\sum_{\ell+1\le k\le N}\norm{\nabla^k f}_\nu^2\le C_4 \delta^2 \sum_{1\le k
\le N} \norm{\nabla^{k} \{{\bf I- P}\}f}_\nu^2.
\end{equation}

\begin{proof}[Proof of \eqref{energy es}]
We take $\ell =0$ in \eqref{energy estimate 5}. Noticing that in this case,
we can absorb the right hand side of \eqref{energy estimate 5}, so we have,
by adjusting the constant in the definition of $\mathcal{E}_{0}(t)$,
\begin{equation}
\frac{d}{dt}\mathcal{E}_{0}+\norm{ \{{\bf I-P}\} f}_{\nu }^{2}+\sum_{1\leq
k\leq N}\norm{\nabla^k f}_{\nu }^{2}\leq 0.  \label{energy estimate 6}
\end{equation}%
Integrating \eqref{energy estimate 6} directly in time, we deduce %
\eqref{energy es}. Hence, if we assume \eqref{initial ass} for a
sufficiently small $\delta _{0}$, then a standard continuity argument closes
the a priori estimates \eqref{a priori} and thus we conclude the global
solution with the uniform bound \eqref{energy es}.
\end{proof}

Now we turn to prove \eqref{H-sbound}--\eqref{decay1}. However, we are not
able to prove them for all $s$ at this moment. We shall first prove them for
$s\in[0,1/2]$.

\begin{proof}[{Proof of \eqref{H-sbound}--\eqref{decay1} for {$s\in[0,1/2]$}}]

First, integrating in time the estimate \eqref{H-s1} of Lemma \ref{lemma H-s}%
, by the bound \eqref{energy es}, we obtain that for $s\in (0,1/2]$,
\begin{equation}
\norm{\Lambda^{-s}f(t)}_{L^2}^2 \le \norm{\Lambda^{-s}f_0}_{L^2}^2 +C\int_0^t \left(%
\norm{\{{\bf I-P}\}f (\tau)}_\nu^2+\norm{ \nabla f (\tau)}_{L^2}^2+ \norm{
\nabla^2 f(\tau)}_{L^2}^2\right)\,d\tau \le C_0.
\end{equation}
This together with \eqref{energy es} gives \eqref{H-sbound} for $s\in[0,1/2]$%
.

Next, we take $\ell =1$ in \eqref{energy estimate 5}. Noticing that in this
case we can also absorb the right hand side of \eqref{energy estimate 5}, so
we have, by adjusting the constant in the definition of $\mathcal{E}_{1}(t)$%
,
\begin{equation}
\frac{d}{dt}\mathcal{E}_{1}+\norm{\nabla \{{\bf I-P}\} f}_{\nu
}^{2}+\sum_{2\leq k\leq N}\norm{\nabla^k f}_{\nu }^{2}\leq 0.
\label{energy estimate 7}
\end{equation}%
Recalling that the energy functional $\mathcal{E}_{1}(t)$ is equivalent to $%
\norm{\nabla  f(t)}_{L_{v}^{2}H_{x}^{N-1}}^{2}$, so there is one exceptional
term $\norm{\nabla {\bf P}f(t)}_{L^2}^{2}$ that can not be bounded by the
corresponding dissipation in \eqref{energy estimate 7}. The key point is to
interpolate by  Lemma \ref{-sinte} as
\begin{equation}
\norm{\nabla {\bf P}f}_{L^2}\leq \norm{\nabla f}_{L^2}\lesssim \norm{\nabla^2 f}_{L^2}^{\frac{%
1+s}{2+s}}\norm{\Lambda^{-s} f}_{L^2}^{\frac{1}{2+s}}.  \label{inter 1}
\end{equation}%
This yields that for some $C_{5}>0$,
\begin{equation}
\norm{\nabla^2 f}_{L^2}\geq C_{5}\norm{\nabla f}_{L^2}^{1+\frac{1}{1+s}}%
\norm{\Lambda^{-s} f}_{L^2}^{-\frac{1}{1+s}}.  \label{inter 2}
\end{equation}%
Thus, by the bound \eqref{H-sbound}, then we have that there exists $C_{0}>0$
so that
\begin{equation}
\norm{\nabla^2 f}_{L^2}\geq C_{0}\norm{\nabla f}_{L^2}^{1+\frac{1}{1+s}}.
\label{inter 3}
\end{equation}%
Hence, by \eqref{inter 3} and the trivial inequality $\norm{\cdot}\leq %
\norm{\cdot}_{\nu }$, we deduce from \eqref{energy estimate 7} that
\begin{equation}
\frac{d}{dt}\mathcal{E}_{1}+C_{0}(\mathcal{E}_{1})^{1+\frac{1}{1+s}}\leq 0.
\label{energy estimate 8}
\end{equation}%
Solving this inequality directly and by \eqref{energy es} again, we obtain
that
\begin{equation}
\mathcal{E}_{1}(t)\leq C_{0}(1+t)^{-(1+s)}.  \label{energy inequality 1}
\end{equation}%
This gives \eqref{decay1} for $\ell =1$. While for $-s<\ell <1$, %
\eqref{decay1} follows by the interpolation.
\end{proof}

\begin{proof}[Proof of \eqref{H-sbound}--\eqref{decay1} for {$s\in(1/2,3/2)$}%
]
Notice that the arguments for the case $s\in[0,1/2]$ can not be applied to
this case. However, observing that we have $f_0\in L^2_v\dot{H}_x^{-1/2}$
since $L^2_v\dot{H}_x^{-s}\cap L^2_vL^2_x\subset L^2_v\dot{H}_x^{-s^{\prime}}
$ for any $s^{\prime}\in [0,s]$, we then deduce from what we have proved for %
\eqref{H-sbound} and \eqref{decay1} with $s=1/2$ that the following decay
result holds:
\begin{equation}  \label{decay11}
\sum_{\ell\le k\le N}\norm{\nabla^k f(t)}_{L^2}^2\le C_0(1+t)^{-(\ell+ \frac{1}{2}%
)}\, \hbox{ for }-\frac{1}{2}\le\ell\le 1.
\end{equation}
Hence, by \eqref{decay11} and \eqref{energy es}, we deduce from \eqref{H-s2}
that for $s\in (1/2,3/2)$,
\begin{equation}  \label{sssbound}
\begin{split}
\norm{\Lambda^{-s}f(t)}_{L^2}^2&\le \norm{\Lambda^{-s}f_0}_{L^2}^2+C \int_0^t\left(%
\norm{\{{\bf I-P}\}f (\tau)}_\nu^2+\norm{ f(\tau) }_{L^2}^{2s+1}\norm{ \nabla f
(\tau) }_{L^2}^{3-2s}\right)\,d\tau. \\
&\le C_0+C_0\int_0^t(1+\tau)^{-(5/2-s)}\,d\tau\le C_0.
\end{split}%
\end{equation}
This proves \eqref{H-sbound} for $s\in (1/2,3/2)$. Now that we have proved %
\eqref{sssbound}, we may repeat the arguments leading to \eqref{decay1} for $%
s\in [0,1/2]$ to obtain \eqref{decay1} for $s\in (1/2,3/2)$.
\end{proof}

\begin{proof}[Proof of \eqref{decay1'}]
Applying the Gronwall inequality to \eqref{micro estimate 1}, by %
\eqref{decay1} with $\ell=1$, we obtain
\begin{equation}  \label{micro inequality}
\begin{split}
\norm{\{\mathbf{I-P }\}f(t) }_{L^2}^2 &\le e^{-\sigma_0t}\norm{\{\mathbf{I-P }%
\}f_0}_{L^2}^2+C\int_0^t e^{-\sigma_0(t-\tau)}\norm{\nabla f(\tau)}_{L^2}^2\,d\tau \\
&\le e^{-\sigma_0t}\norm{f_0}_{L^2}^2+C_0\int_0^t
e^{-\sigma_0(t-\tau)}(1+\tau)^{-(1+s)}\,d\tau \\
&\le C_0 (1+t)^{-(1+s)}.
\end{split}%
\end{equation}
This gives \eqref{decay1'}.
\end{proof}

\begin{proof}[Proof of \eqref{decay2}]
Due to the estimates \eqref{energy es} and \eqref{micro estimate 2}, if we
assume \eqref{initial ass2} for a sufficiently small $\delta _{0}>0$, then
we have the estimates
\begin{equation}
\norm{f(t)}_{L_{v}^{2}H_{x}^{N}}+\norm{f(t)}_{\nu }\lesssim \delta .
\label{a priori es}
\end{equation}%
This allows us to use Lemma \ref{nonlinear lemma 3} in place of Lemma \ref%
{nonlinear lemma 2} to improve the estimate \eqref{energy estimate 5} as
\begin{equation}
\frac{d}{dt}\mathcal{E}_{\ell }+\norm{\nabla^\ell\{{\bf I-P}\}f}_{\nu
}^{2}+\sum_{\ell +1\leq k\leq N}\norm{\nabla^k f}_{\nu }^{2}\leq 0\,\hbox{
for }\ell =0,\dots ,N-1.  \label{energy estimate 9}
\end{equation}%
Then we interpolate by using Lemma \ref{-sinte} as,
\begin{equation}
\norm{\nabla^\ell f}_{L^2}\lesssim \norm{\nabla^{\ell+1}f}_{L^2}^{\frac{\ell +s}{\ell
+s+1}}\norm{\Lambda^{-s} f}_{L^2}^{\frac{1}{\ell +s+1}}.  \label{inter 11}
\end{equation}%
This together with \eqref{H-sbound} yields that there exists $C_{0}>0$ such
that for $-s<\ell \leq N-1$,
\begin{equation}
\norm{\nabla^{\ell+1} f}_{L^2}\geq C\left( \norm{\nabla^\ell f}_{L^2}^{2}\right) ^{1+%
\frac{1}{\ell +s}}\norm{\Lambda^{-s} f}_{L^2}^{-\frac{1}{\ell +s}}\geq C_{0}\left( %
\norm{\nabla^\ell f}_{L^2}^{2}\right) ^{1+\frac{1}{\ell +s}}.  \label{inter 21}
\end{equation}%
Hence, by \eqref{inter 21} and the trivial inequality $\norm{\cdot}_{L^2}\leq %
\norm{\cdot}_{\nu }$, we deduce from \eqref{energy estimate 9} that
\begin{equation}
\frac{d}{dt}\mathcal{E}_{\ell }+C_{0}(\mathcal{E}_{\ell })^{1+\frac{1}{\ell
+s}}\leq 1\,\hbox{ for }\ell =1,\dots ,N-1.  \label{energy estimate 18}
\end{equation}%
Solving this inequality directly and by \eqref{energy es} again, we obtain
that
\begin{equation}
\mathcal{E}_{\ell }(t)\leq C_{0}(1+t)^{-(\ell +s)}\,\hbox{ for }\ell
=1,\dots ,N-1.  \label{energy inequality 1''}
\end{equation}%
Thus, by \eqref{H-sbound}, \eqref{energy inequality 1''} and the
interpolation, we deduce \eqref{decay2}.
\end{proof}

\begin{proof}[Proof of \eqref{decay2'}]
The estimates \eqref{a priori es} allows us to have the estimates %
\eqref{micro estimate 3} of Lemma \ref{lemma micro 3}. Hence, applying the
Gronwall inequality to \eqref{micro estimate 3} with $k=1,\cdots ,N-2$, by %
\eqref{decay2} with $\ell =k+1$, we obtain
\begin{equation}
\begin{split}
\norm{ \nabla ^{k}\{\mathbf{I-P}\}f(t)}_{L^2} ^{2}& \leq e^{-\sigma _{0}t}\norm{
\nabla ^{k}\{\mathbf{I-P}\}f_{0}}_{L^2} ^{2}+C\int_{0}^{t}e^{-\sigma
_{0}(t-\tau )}\norm{\nabla^{k+1} f(\tau)}_{L^2}^{2}\,d\tau  \\
& \leq e^{-\sigma _{0}t}\norm{\nabla ^{k}f_{0}}_{L^2}
^{2}+C_{0}\int_{0}^{t}e^{-\sigma _{0}(t-\tau )}(1+\tau )^{-(k+1+s)}\,d\tau
\\
& \leq C_{0}(1+t)^{-(k+1+s)}.
\end{split}
\label{micro inequality 3}
\end{equation}%
This proves \eqref{decay2'} for $\ell =1,\dots ,N-2$. Which for $-s<\ell <N-2
$, \eqref{decay2'} follows by the interpolation. The proof of Theorem \ref%
{theorem1} is completed.
\end{proof}

\appendix


\section{Analytic tools}

\label{section_appendix} 


\subsection{Sobolev type inequalities}


We will extensively use the Sobolev interpolation of the Gagliardo-Nirenberg
inequality.

\begin{lemma}
\label{1interpolation}  Let $0\le m, \alpha\le \ell$, then we have
\begin{equation}
\norm{\nabla^\alpha f}_{L^p}\lesssim \norm{ \nabla^mf}_{L^2}^{1-\theta}%
\norm{ \nabla^\ell f}_{L^2}^{\theta}
\end{equation}
where $0\le \theta\le 1$ and $\alpha$ satisfy
\begin{equation}
\frac{1}{p}-\frac{\alpha}{3}=\left(\frac{1}{2}-\frac{m}{3}%
\right)(1-\theta)+\left(\frac{1}{2}-\frac{\ell}{3}\right)\theta.
\end{equation}

\begin{proof}
This can be found in \cite[pp. 125, THEOREM]{N1959}.
\end{proof}
\end{lemma}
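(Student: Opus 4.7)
The statement is the classical Gagliardo--Nirenberg interpolation inequality. The plan is to prove it by homogeneous Littlewood--Paley decomposition, which handles integer and fractional orders uniformly---consistent with the paper's convention $\nabla^\ell = \Lambda^\ell$ for non-integer $\ell$. I would decompose $f = \sum_{j \in \mathbb{Z}} \Delta_j f$ with $\Delta_j$ the projector onto frequencies $|\xi| \sim 2^j$, and use Bernstein's inequalities
\[
\norm{\Delta_j \nabla^\alpha f}_{L^p} \lesssim 2^{j\beta} \norm{\Delta_j f}_{L^2}, \qquad 2^{jk}\norm{\Delta_j f}_{L^2} \lesssim \norm{\nabla^k f}_{L^2},
\]
where $\beta := \alpha + 3(\tfrac{1}{2} - \tfrac{1}{p})$. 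The scaling hypothesis on $\alpha$ is then equivalent to the single algebraic identity $\beta = (1-\theta) m + \theta \ell$.

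Assume without loss of generality $m \le \ell$, so that $m \le \beta \le \ell$. Splitting the dyadic sum at a threshold $J$ and using the triangle inequality,
\[
\norm{\nabla^\alpha f}_{L^p} \lesssim \sum_{j \le J} 2^{j(\beta-m)} \cdot 2^{jm}\norm{\Delta_j f}_{L^2} + \sum_{j > J} 2^{j(\beta-\ell)} \cdot 2^{j\ell}\norm{\Delta_j f}_{L^2}.
\]
Since $\beta - m \ge 0$ and $\beta - \ell \le 0$, the geometric factors sum telescopically, and the per-shell bound $2^{jk}\norm{\Delta_j f}_{L^2} \lesssim \norm{\nabla^k f}_{L^2}$ (for $k \in \{m, \ell\}$) yields
\[
\norm{\nabla^\alpha f}_{L^p} \lesssim 2^{J(\beta-m)} \norm{\nabla^m f}_{L^2} + 2^{J(\beta-\ell)} \norm{\nabla^\ell f}_{L^2}.
\]
Balancing the two terms by choosing $2^{J(\ell-m)} = \norm{\nabla^\ell f}_{L^2}/\norm{\nabla^m f}_{L^2}$, and using $\beta - m = \theta(\ell-m)$ together with $\ell - \beta = (1-\theta)(\ell-m)$, produces the stated inequality with exponent $\theta$.

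The main obstacles are routine endpoint matters. When $\theta \in \{0,1\}$ one of the geometric sums degenerates and the remaining piece already gives the inequality directly as a pure Sobolev embedding. The case $p = \infty$ requires the endpoint Bernstein estimate and slightly more care in summing over $j$. Finally, the degenerate situations where one of $\norm{\nabla^m f}_{L^2}$ or $\norm{\nabla^\ell f}_{L^2}$ vanishes or is infinite are handled by a standard density/approximation argument. Once the scaling identity $\beta = (1-\theta)m + \theta\ell$ is identified as the sole algebraic constraint, the structure of the proof is essentially forced. A less streamlined classical alternative, closer to the cited reference, is Nirenberg's original argument based on integration by parts and Hölder iteration for integer orders, extended to fractional orders via duality and Riesz potentials.
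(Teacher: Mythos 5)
Your argument is sound and gives a genuinely different route from what the paper does: the paper simply cites Nirenberg's original 1959 theorem (whose proof is elementary real analysis — integration by parts, H\"older iteration for integer orders), whereas you give a self-contained homogeneous Littlewood--Paley/Bernstein proof. Your approach has the real advantage, which you correctly identify, of treating integer and fractional orders in a single stroke, matching the paper's convention $\nabla^\ell = \Lambda^\ell$ for non-integer $\ell$; the classical route would require a separate duality/Riesz-potential argument to reach fractional orders. Two caveats worth flagging if you were to include this as an actual proof. First, your Bernstein step $\norm{\Delta_j\nabla^\alpha f}_{L^p}\lesssim 2^{j\beta}\norm{\Delta_j f}_{L^2}$ uses the $L^2\to L^p$ direction, which requires $p\geq 2$; the statement as written has no such restriction, though every invocation in this paper does satisfy $p\geq 2$. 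Second, at the endpoint $\theta=0$ with $p=\infty$ the geometric sum in $j$ does not converge from the per-shell bound alone (this is the failure of $\dot H^{3/2}\hookrightarrow L^\infty$), so "the remaining piece already gives a pure Sobolev embedding" is a bit too quick there — one should either exclude that corner or note that the paper never uses it.
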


For the Boltzmann equation, we shall use the corresponding Sobolev
interpolation of the Gagliardo-Nirenberg inequality for the functions on $%
\r3_x\times\r3_v$.

\begin{lemma}
\label{interpolation} Let $0\le m, \alpha\le \ell$. Let $w(v)$ be any weight
function of $v$, then we have
\begin{equation}  \label{GN00}
\left(\int_{\mathbb{R}^3_v}w\norm{\nabla^\alpha f}_{L^p_x}^2\,dv\right)^{%
\frac{1}{2}} \lesssim \left(\int_{\mathbb{R}^3_v}w\norm{ \nabla^mf}%
_{L^2_x}^2\,dv\right)^{\frac{1-\theta}{2}}\left(\int_{\mathbb{R}^3_v}w\norm{
\nabla^\ell f}_{L^2_x}^2\,dv\right)^{\frac{\theta}{2}}
\end{equation}
where $0\le \theta\le 1$ and $\alpha$ satisfy
\begin{equation}
\frac{1}{p}-\frac{\alpha}{3}=\left(\frac{1}{2}-\frac{m}{3}%
\right)(1-\theta)+\left(\frac{1}{2}-\frac{\ell}{3}\right)\theta.
\end{equation}
\end{lemma}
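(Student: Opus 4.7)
The plan is to reduce the weighted $v$-integrated inequality to the standard (unweighted) Gagliardo--Nirenberg inequality on $\mathbb{R}^3_x$ applied slicewise in $v$, followed by H\"older in the $v$-variable. This is the natural two-step strategy whenever one wants to upgrade an $x$-inequality to a mixed $L^2_v$-$L^p_x$ inequality.

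First I would fix $v \in \mathbb{R}^3_v$ and apply the previous lemma (Lemma \ref{1interpolation}) to the function $x \mapsto f(x,v)$, yielding
\begin{equation*}
\norm{\nabla^\alpha f(\cdot,v)}_{L^p_x} \lesssim \norm{\nabla^m f(\cdot,v)}_{L^2_x}^{1-\theta}\norm{\nabla^\ell f(\cdot,v)}_{L^2_x}^{\theta},
\end{equation*}
with the same $\theta$ and scaling relation on $\alpha,m,\ell,p$ as in the statement, and with the implicit constant independent of $v$. Squaring this pointwise-in-$v$ inequality, multiplying by the nonnegative weight $w(v)$, and integrating in $v$ gives
\begin{equation*}
\int_{\mathbb{R}^3_v} w(v)\, \norm{\nabla^\alpha f(\cdot,v)}_{L^p_x}^2 \, dv \lesssim \int_{\mathbb{R}^3_v} \Bigl(w(v)^{1-\theta}\norm{\nabla^m f(\cdot,v)}_{L^2_x}^{2(1-\theta)}\Bigr)\Bigl(w(v)^\theta\norm{\nabla^\ell f(\cdot,v)}_{L^2_x}^{2\theta}\Bigr) dv,
\end{equation*}
where I have split the factor $w = w^{1-\theta}\cdot w^\theta$ in preparation for H\"older.

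Next, in the cases $0<\theta<1$, I would apply H\"older's inequality in $v$ with conjugate exponents $\frac{1}{1-\theta}$ and $\frac{1}{\theta}$ to the displayed product, obtaining
\begin{equation*}
\int_{\mathbb{R}^3_v} w\, \norm{\nabla^\alpha f}_{L^p_x}^2 \, dv \lesssim \left(\int_{\mathbb{R}^3_v} w\, \norm{\nabla^m f}_{L^2_x}^{2} dv\right)^{1-\theta} \left(\int_{\mathbb{R}^3_v} w\, \norm{\nabla^\ell f}_{L^2_x}^{2} dv\right)^{\theta}.
\end{equation*}
Taking square roots yields exactly the claimed inequality \eqref{GN00}. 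The boundary cases $\theta=0$ and $\theta=1$ are trivial since the corresponding slicewise inequality already has no product structure, so no H\"older is needed there.

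There is no real obstacle: the only thing worth checking carefully is that the constant in Lemma \ref{1interpolation} is independent of the slice $v$ (which it is, being a statement purely about the $x$-variable), so that pulling the constant out before the $v$-integration is legitimate. One should also note that the weight $w(v) \ge 0$ need not be integrable or bounded for the H\"older step to go through, since it appears symmetrically on both sides; so the statement truly holds for ``any weight function of $v$" in the sense that $w$ can be an arbitrary nonnegative measurable function (with the inequality being vacuous on the set where the right-hand side is infinite).
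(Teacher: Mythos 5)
Your proof is correct and follows essentially the same route as the paper: apply the unweighted Gagliardo--Nirenberg inequality (Lemma \ref{1interpolation}) slicewise in $v$, square, multiply by $w(v)$, integrate in $v$, and then apply H\"older's inequality with exponents $\frac{1}{1-\theta}$ and $\frac{1}{\theta}$ before taking square roots. The only cosmetic difference is that the paper factors the weight as $w = (w^{1/2})^{2(1-\theta)}(w^{1/2})^{2\theta}$ while you write $w = w^{1-\theta}w^{\theta}$, which is of course the same thing.
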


\begin{proof}
For any function $f(x,v)$, by Lemma \ref{1interpolation}, we have
\begin{equation}
\norm{\nabla^\alpha f}_{L_{x}^{p}}\lesssim \norm{ \nabla^mf}%
_{L_{x}^{2}}^{1-\theta }\norm{ \nabla^\ell f}_{L_{x}^{2}}^{\theta }.
\label{GN1}
\end{equation}%
Taking the square of \eqref{GN1} and then multiplying by $w(v)$, integrating
over $\mathbb{R}_{v}^{3}$, by H\"{o}lder's inequality, we obtain
\begin{equation}
\begin{split}
\int_{\mathbb{R}_{v}^{3}}w\norm{\nabla^\alpha f}_{L_{x}^{p}}^{2}dv& \lesssim
\int_{\mathbb{R}_{v}^{3}}w\norm{ \nabla^mf}_{L_{x}^{2}}^{2(1-\theta )}\norm{
\nabla^\ell f}_{L_{x}^{2}}^{2\theta }\,dv \\
& =\int_{\mathbb{R}_{v}^{3}}\left( w^{\frac{1}{2}}\norm{\nabla^mf}%
_{L_{x}^{2}}\right) ^{2(1-\theta )}\left( w^{\frac{1}{2}}\norm{\nabla^\ell f}%
_{L_{x}^{2}}\right) ^{2\theta }\,dv \\
& \leq \left( \int_{\mathbb{R}_{v}^{3}}\left( w^{\frac{1}{2}}\norm{\nabla^mf}%
_{L_{x}^{2}}\right) ^{2}\,dv\right) ^{1-\theta }\left( \int_{\mathbb{R}%
_{v}^{3}}\left( w^{\frac{1}{2}}\norm{ \nabla^\ell f}_{L_{x}^{2}}\right)
^{2\theta }\,dv\right) ^{\theta }.
\end{split}
\label{GN2}
\end{equation}%
Taking the square root of \eqref{GN2}, we deduce \eqref{GN00}.
\end{proof}

We recall the following commutator estimate:

\begin{lemma}
\label{1commutator} Let $m\ge 1$ be an integer and define the commutator
\begin{equation}  \label{1commuta}
[\nabla^m,f]g=\nabla^m(fg)-f\nabla^mg.
\end{equation}
Then we have
\begin{equation}
\norm{[\nabla^m,f]g}_{L^2} \lesssim \norm{\nabla f}_{L^\infty}%
\norm{\nabla^{m-1}g}_{L^2}+\norm{\nabla^m f}_{L^2}\norm{ g}_{L^\infty}.
\end{equation}
\end{lemma}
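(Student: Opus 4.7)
The plan is to expand the commutator via the Leibniz rule and then control each resulting term using a combination of H\"older's inequality, Gagliardo--Nirenberg interpolation, and Young's inequality. First I would write
\begin{equation*}
[\nabla^m, f]g \;=\; \sum_{k=1}^{m} \binom{m}{k}\, \nabla^k f \cdot \nabla^{m-k} g,
\end{equation*}
so that the task reduces to estimating $\|\nabla^k f \cdot \nabla^{m-k}g\|_{L^2}$ for each $1\le k\le m$.

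The two endpoint cases $k=1$ and $k=m$ are handled at once by H\"older and produce exactly the two contributions on the right-hand side of the claimed inequality. For $m\ge 3$ and an intermediate index $2\le k\le m-1$, I would apply H\"older with the calibrated conjugate exponents $p_k=2(m-1)/(k-1)$ and $q_k=2(m-1)/(m-k)$, which indeed satisfy $1/p_k+1/q_k=1/2$, and then interpolate each factor by the Gagliardo--Nirenberg inequality with one $L^\infty$ endpoint (a standard extension of Lemma~\ref{1interpolation}):
\begin{equation*}
\|\nabla^k f\|_{L^{p_k}} \lesssim \|\nabla f\|_{L^\infty}^{(m-k)/(m-1)}\|\nabla^m f\|_{L^2}^{(k-1)/(m-1)},
\end{equation*}
\begin{equation*}
\|\nabla^{m-k} g\|_{L^{q_k}} \lesssim \|g\|_{L^\infty}^{(k-1)/(m-1)}\|\nabla^{m-1} g\|_{L^2}^{(m-k)/(m-1)}.
\end{equation*}
The whole point of this choice of $p_k,q_k$ is to force the two GN interpolation exponents to be conjugate; a short scale-invariance check confirms that these exponents solve the GN scaling identity.

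Writing $A = \|\nabla f\|_{L^\infty}\|\nabla^{m-1} g\|_{L^2}$ and $B = \|\nabla^m f\|_{L^2}\|g\|_{L^\infty}$, multiplying the two interpolations and applying Young's inequality with conjugate exponents $(m-1)/(m-k)$ and $(m-1)/(k-1)$ gives
\begin{equation*}
\|\nabla^k f \cdot \nabla^{m-k} g\|_{L^2} \lesssim A^{(m-k)/(m-1)} B^{(k-1)/(m-1)} \le A + B,
\end{equation*}
which is the desired bound for that $k$; summing over $k$ completes the proof. The only nontrivial point is the verification of the scaling relation for the Gagliardo--Nirenberg interpolations with the $L^\infty$ endpoint, but since both interpolation parameters $(k-1)/(m-1)$ and $(m-k)/(m-1)$ lie strictly between $0$ and $1$ and sum to $1$, this is a purely mechanical check. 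I expect no genuine obstacle beyond this bookkeeping; if one prefers to remain strictly within the statement of Lemma~\ref{1interpolation}, a minor variant using Sobolev embedding to replace the $L^\infty$ endpoints would yield the same conclusion.
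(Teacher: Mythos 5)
Your proof is correct, and since the paper itself gives no argument here (it simply cites Majda--Bertozzi, Lemma~3.4, p.~98, and remarks that Lemma~\ref{1interpolation} is the key ingredient), your Leibniz-plus-Gagliardo--Nirenberg argument is essentially the standard proof that the citation points to. One small bookkeeping remark: the two interpolations you invoke have an $L^\infty$ endpoint and therefore fall outside the literal statement of Lemma~\ref{1interpolation}, which only allows $L^2$ on both ends; you flag this yourself and the remedy is routine (either quote the general Nirenberg inequality from~\cite{N1959}, or, as you suggest, insert a Sobolev embedding to trade the $L^\infty$ norms for $L^2$ norms of two more derivatives), so this is a presentational gap rather than a mathematical one. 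The conjugate-exponent choice $p_k=2(m-1)/(k-1)$, $q_k=2(m-1)/(m-k)$ and the Young step with weights $(m-k)/(m-1)$ and $(k-1)/(m-1)$ check out.
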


\begin{proof}
It can be proved by using Lemma \ref{1interpolation}, see \cite[pp. 98,
Lemma 3.4]{MB} for instance.
\end{proof}


\subsection{Negative Sobolev norms}


We define the operator $\Lambda^s, s\in \mathbb{R}$ by
\begin{equation}  \label{1Lambdas}
\Lambda^s f(x)=\int_{\mathbb{R}^3}|\xi|^s\hat{f}(\xi)e^{2\pi
ix\cdot\xi}\,d\xi,
\end{equation}
where $\hat{f}$ is the Fourier transform of $f$. We define the homogeneous
Sobolev space $\dot{H}^s$ of all $f$ for which $\norm{f}_{\dot{H}^s}$ is
finite, where
\begin{equation}  \label{1snorm}
\norm{f}_{\dot{H}^s}:=\norm{\Lambda^s f}_{L^2}=\norm{|\xi|^s \hat{f}}_{L^2}.
\end{equation}
We will use the non-positive index $s$. For convenience, we will change the
index to be ``$-s$" with $s\ge 0$. We will employ the following special
Sobolev interpolation:

\begin{lemma}
\label{1-sinte} Let $s\ge 0$ and $\ell\ge 0$, then we have
\begin{equation}  \label{1-sinterpolation}
\norm{\nabla^\ell f}_{L^2}\le \norm{\nabla^{\ell+1} f}_{L^2}^{1-\theta}%
\norm{\Lambda^{-s}f}_{L^2}^\theta, \hbox{ where }\theta=\frac{1}{\ell+1+s}.
\end{equation}
\end{lemma}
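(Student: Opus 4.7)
The plan is to prove \eqref{1-sinterpolation} by passing to the Fourier side via Plancherel's theorem and then applying H\"older's inequality in the frequency variable. The choice $\theta=1/(\ell+1+s)$ will emerge naturally from matching the exponents of $|\xi|$.

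First I would write, using Plancherel and the definitions \eqref{1Lambdas}--\eqref{1snorm},
\begin{equation*}
\norm{\nabla^\ell f}_{L^2}^2 = \int_{\mathbb{R}^3} |\xi|^{2\ell} |\hat f(\xi)|^2\, d\xi = \int_{\mathbb{R}^3} \bigl(|\xi|^{-s}|\hat f(\xi)|\bigr)^{2\theta}\,\bigl(|\xi|^{\ell+1}|\hat f(\xi)|\bigr)^{2(1-\theta)}\, d\xi,
\end{equation*}
where the factorization is valid provided the exponents of $|\xi|$ and $|\hat f|$ balance. The $|\hat f|$ exponents give $2\theta+2(1-\theta)=2$ automatically, while matching the $|\xi|$ exponent requires $-s\cdot 2\theta+(\ell+1)\cdot 2(1-\theta)=2\ell$, which rearranges to $1-\theta(\ell+1+s)=0$, yielding precisely $\theta=1/(\ell+1+s)$.

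Next I would apply H\"older's inequality with conjugate exponents $1/\theta$ and $1/(1-\theta)$ (which are both $\geq 1$ since $\theta\in(0,1]$ for $s\ge 0$ and $\ell\ge 0$) to obtain
\begin{equation*}
\norm{\nabla^\ell f}_{L^2}^2 \le \left(\int_{\mathbb{R}^3} |\xi|^{-2s}|\hat f(\xi)|^2\, d\xi\right)^{\theta} \left(\int_{\mathbb{R}^3} |\xi|^{2(\ell+1)}|\hat f(\xi)|^2\, d\xi\right)^{1-\theta} = \norm{\Lambda^{-s}f}_{L^2}^{2\theta}\,\norm{\nabla^{\ell+1}f}_{L^2}^{2(1-\theta)}.
\end{equation*}
Taking square roots produces \eqref{1-sinterpolation}.

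There is no serious obstacle here; the only minor points to handle are the edge case $\theta=1$ (which occurs when $s=\ell=0$ and reduces to the trivial identity $\norm{f}_{L^2}\le \norm{f}_{L^2}$) and ensuring the integrals are finite, in which case the inequality is informative and otherwise is automatic. The argument requires nothing beyond Plancherel and H\"older, and is independent of the PDE structure, so it slots in cleanly as an analytic tool to be cited in the main decay arguments.
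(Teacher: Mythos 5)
Your argument—pass to the Fourier side via Plancherel, factor the integrand so the $|\xi|$ exponents balance, and apply H\"older with exponents $1/\theta$ and $1/(1-\theta)$—is exactly the paper's proof, merely written out in more detail. Both are correct and identical in method.
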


\begin{proof}
By the Parseval theorem, the definition of \eqref{1snorm} and H\"older's
inequality, we have
\begin{equation}
\norm{\nabla^\ell f}_{L^2} =\norm{|\xi|^\ell \hat{f}}_{L^2}\le %
\norm{|\xi|^{\ell+1} \hat{f}}_{L^2}^{1-\theta}\norm{|\xi|^{-s} \hat{f}}%
_{L^2}^\theta =\norm{\nabla^{\ell+1}f}_{L^2}^{1-\theta}\norm{ f}_{\Dot{H}%
^{-s}}^\theta.
\end{equation}
\end{proof}

For the Boltzmann equation, we shall use the corresponding Sobolev
interpolation for the functions on $\r3_x\times\r3_v$.

\begin{lemma}
\label{-sinte} Let $s\ge 0$ and $\ell\ge 0$, then we have
\begin{equation}
\norm{\nabla^\ell f}_{L^2}\lesssim \norm{\nabla^{\ell+1} f}_{L^2}^{1-\theta}%
\norm{\Lambda^{-s}f}_{L^2}^\theta, \hbox{ where }\theta=\frac{1}{\ell+1+s}.
\end{equation}
\end{lemma}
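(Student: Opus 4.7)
The plan is to reduce Lemma \ref{-sinte} to the $x$-only version (Lemma \ref{1-sinte}) by applying it slicewise in the velocity variable, and then integrating the resulting pointwise-in-$v$ inequality using H\"older. This parallels how Lemma \ref{interpolation} was deduced from Lemma \ref{1interpolation}.

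First, I would interpret the norms in the statement: $\norm{\cdot}_{L^2}$ denotes the full $L^2(\mathbb{R}^3_x\times\mathbb{R}^3_v)$ norm, and $\Lambda^{-s}$ acts only in the $x$-variable (consistent with the definition \eqref{1Lambdas} used throughout the Boltzmann section, where all negative Sobolev norms are taken in $L^2_v\dot{H}^{-s}_x$). Fixing $v\in\mathbb{R}^3_v$ and viewing $f(\cdot,v)$ as a function of $x$ alone, Lemma \ref{1-sinte} yields the pointwise-in-$v$ bound
\begin{equation*}
\norm{\nabla^\ell f(\cdot,v)}_{L^2_x}\le \norm{\nabla^{\ell+1} f(\cdot,v)}_{L^2_x}^{1-\theta}\norm{\Lambda^{-s} f(\cdot,v)}_{L^2_x}^{\theta},\qquad \theta=\frac{1}{\ell+1+s}.
\end{equation*}

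Next, I square this inequality and integrate in $v$. The integrand on the right becomes $\norm{\nabla^{\ell+1} f(\cdot,v)}_{L^2_x}^{2(1-\theta)}\norm{\Lambda^{-s} f(\cdot,v)}_{L^2_x}^{2\theta}$, a product of two nonnegative functions of $v$ with exponents summing to $2$. Applying H\"older's inequality with conjugate exponents $1/(1-\theta)$ and $1/\theta$ gives
\begin{equation*}
\int_{\mathbb{R}^3_v}\norm{\nabla^\ell f(\cdot,v)}_{L^2_x}^2\,dv\le \left(\int_{\mathbb{R}^3_v}\norm{\nabla^{\ell+1} f(\cdot,v)}_{L^2_x}^2\,dv\right)^{1-\theta}\!\!\left(\int_{\mathbb{R}^3_v}\norm{\Lambda^{-s} f(\cdot,v)}_{L^2_x}^2\,dv\right)^{\theta}.
\end{equation*}
By Fubini, the integrals on the right are exactly $\norm{\nabla^{\ell+1}f}_{L^2}^2$ and $\norm{\Lambda^{-s}f}_{L^2}^2$, respectively, so taking square roots yields the claimed inequality with implied constant $1$.

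There is no real obstacle here: the only substantive ingredient is Lemma \ref{1-sinte}, which has already been proved via Parseval and H\"older in frequency, and the $v$-integration step is a standard H\"older application (identical in structure to the derivation of \eqref{GN00} from \eqref{GN1} in the proof of Lemma \ref{interpolation}). One could alternatively prove the inequality in a single shot by using the Fourier transform in $x$ only and applying H\"older directly on $\mathbb{R}^3_\xi\times\mathbb{R}^3_v$, but the slicewise route is cleaner and keeps the argument parallel to the treatment of Lemma \ref{interpolation}.
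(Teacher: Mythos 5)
Your proof is correct and follows essentially the same route as the paper: the paper simply states that Lemma \ref{-sinte} follows by taking the $L^2_v$ norm of \eqref{1-sinterpolation}, and your slicewise-in-$v$ application of Lemma \ref{1-sinte} followed by H\"older with exponents $1/(1-\theta)$ and $1/\theta$ is precisely what that compressed sentence means (and parallels the proof of Lemma \ref{interpolation}, as you note). You have correctly identified that $\Lambda^{-s}$ acts only in the $x$-variable and that the constant can in fact be taken to be $1$.
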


\begin{proof}
It follows by further taking the $L^2$ norm of \eqref{1-sinterpolation} over
$\mathbb{R}_v^3$.
\end{proof}

If $s\in(0,3)$, $\Lambda^{-s}f$ defined by \eqref{1Lambdas} is the Riesz
potential. The Hardy-Littlewood-Sobolev theorem implies the following $L^p$
inequality for the Riesz potential:

\begin{lemma}
\label{1Riesz} Let $0<s<3,\ 1<p<q<\infty,\ 1/q+s/3=1/p$, then
\begin{equation}  \label{1Riesz es}
\norm{\Lambda^{-s}f}_{L^q}\lesssim\norm{ f}_{L^p}.
\end{equation}
\end{lemma}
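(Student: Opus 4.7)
The plan is to recognize this as the classical Hardy--Littlewood--Sobolev inequality on $\mathbb{R}^{3}$, and to prove it by the standard weak-type/Marcinkiewicz interpolation route. First I would rewrite $\Lambda^{-s}$ as a Riesz potential: since the Fourier transform of $|\xi|^{-s}$ on $\mathbb{R}^{3}$ equals (a constant multiple of) $|x|^{-(3-s)}$ whenever $0 < s < 3$, one has the representation
\begin{equation*}
\Lambda^{-s} f(x) = c_{s} \int_{\mathbb{R}^{3}} \frac{f(y)}{|x-y|^{3-s}}\, dy.
\end{equation*}
The scaling $|x|^{-(3-s)}$ is precisely the kernel forced by the homogeneity condition $1/q + s/3 = 1/p$.

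Next I would establish the weak-type bound $\Lambda^{-s} : L^{p} \to L^{q,\infty}$. Fix $R > 0$ and split the kernel as $K = K_{1} + K_{2}$ with $K_{1} = |z|^{-(3-s)}\mathbf{1}_{|z| \le R}$ and $K_{2} = |z|^{-(3-s)}\mathbf{1}_{|z| > R}$. For the far piece, H\"older's inequality yields $|K_{2} \ast f(x)| \le \|K_{2}\|_{L^{p'}} \|f\|_{L^{p}}$, and a direct computation gives $\|K_{2}\|_{L^{p'}} \lesssim R^{-3/q}$ (using $3-(3-s)p' = -3p'/q$, which is the content of $1/q = 1/p - s/3$). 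For the near piece, Young's inequality yields $\|K_{1} \ast f\|_{L^{p}} \le \|K_{1}\|_{L^{1}} \|f\|_{L^{p}} \lesssim R^{s} \|f\|_{L^{p}}$. Given $\lambda > 0$, I would choose $R$ so that $\|K_{2}\|_{L^{p'}} \|f\|_{L^{p}} \le \lambda/2$, which forces $R \sim (\|f\|_{L^{p}}/\lambda)^{q/3}$. Chebyshev's inequality applied to $K_{1} \ast f$ then yields
\begin{equation*}
\bigl|\{ x : |\Lambda^{-s} f(x)| > \lambda \}\bigr| \lesssim \left(\frac{\|K_{1} \ast f\|_{L^{p}}}{\lambda/2}\right)^{p} \lesssim \left(\frac{R^{s} \|f\|_{L^{p}}}{\lambda}\right)^{p} \lesssim \left(\frac{\|f\|_{L^{p}}}{\lambda}\right)^{q},
\end{equation*}
where the last step uses the chosen $R$ together with the scaling relation $sp \cdot q/3 + p = q$.

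Finally, having the weak-type $(p,q)$ bound at the chosen endpoint, I would upgrade it to the strong-type bound by the Marcinkiewicz interpolation theorem. Concretely, pick $p_{1} < p < p_{2}$ with corresponding exponents $q_{1}, q_{2}$ determined by $1/q_{i} = 1/p_{i} - s/3$ (both in $(1,\infty)$ by the open-ended hypothesis $1 < p < q < \infty$), apply the weak-type argument above at both endpoints, and interpolate to obtain the strong inequality $\|\Lambda^{-s} f\|_{L^{q}} \lesssim \|f\|_{L^{p}}$. The main technical nuisance is bookkeeping the exponent conditions so that the weak-type argument actually applies at the interpolation endpoints (in particular, $p_{i}' > 3/(3-s)$ is required so that $K_{2} \in L^{p_{i}'}$), but this is automatic once $p_{i}$ is chosen close enough to $p$ because the hypothesis $p < q$ translates exactly into $s/3 < 1/p'$, i.e., $p' > 3/(3-s)$.
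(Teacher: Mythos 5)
The paper does not prove Lemma \ref{1Riesz}; it simply cites Stein \cite{S}, and the proof you give is precisely the one found there (Riesz-potential kernel, near/far decomposition $K=K_1+K_2$, weak-type $(p,q)$ bound by optimizing $R$, then Marcinkiewicz interpolation along the line $1/q=1/p-s/3$). Your main argument is correct: the exponent computations $\|K_2\|_{L^{p'}}\sim R^{-3/q}$, $\|K_1\|_{L^1}\sim R^s$, and the verification $spq/3+p=q$ all check out.

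One small slip in the last paragraph: the condition for $K_2\in L^{p'}$ is $(3-s)p'>3$, i.e. $1/p'<(3-s)/3$, which in terms of $p$ reads $1/p>s/3$, i.e. $p<3/s$. You wrote the inequality as ``$s/3<1/p'$,'' which is a different (and generally false) condition, and you attributed it to the hypothesis $p<q$. In fact $p<q$ is automatic whenever $s>0$; the constraint $p<3/s$ comes from $q<\infty$. Since the lemma assumes $q<\infty$, the conclusion you draw (that $p_i$ close to $p$ still satisfies the required bound) is correct, but the justification should invoke $q<\infty$ rather than $p<q$, and the inequality should be $1/p'<(3-s)/3$.
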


\begin{proof}
See \cite[pp. 119, Theorem 1]{S}.
\end{proof}


\subsection{Minkowski's inequality}


In estimating the nonlinear terms for the Boltzmann equation, it is crucial
to use the Minkowski's integral inequality to exchange the orders of
integration over $x$ and $v$.

\begin{lemma}
\label{Minkowski} Let $1\le p<\infty$. Let $f$ be a measurable function on $%
\mathbb{R}_y^3\times \mathbb{R}_z^3$, then we have
\begin{equation}  \label{mink00}
\left(\int_{\mathbb{R}_z^3}\left(\int_{\mathbb{R}_y^3}|f(y,z)|\,dy\right)^p%
\,dz\right)^\frac{1}{p} \le\int_{\mathbb{R}_y^3}\left(\int_{\mathbb{R}%
_z^3}|f(y,z)|^p\,dz\right)^\frac{1}{p}\,dy.
\end{equation}
In particular, for $1\le p\le q\le \infty$, we have
\begin{equation}  \label{min es}
\norm{f}_{L^q_zL^p_y}\le \norm{f}_{L^p_yL^q_z}.
\end{equation}
\end{lemma}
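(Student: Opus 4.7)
The plan is to establish the basic Minkowski integral inequality \eqref{mink00} first via duality against $L^{p'}_z$, and then derive the mixed-norm version \eqref{min es} by applying \eqref{mink00} to a suitable power of $|f|$. Both are classical, so the argument is mostly conceptual rather than computational.

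For \eqref{mink00} with $1\le p<\infty$, I would set $F(z) := \int_{\mathbb{R}^3_y}|f(y,z)|\,dy$ (well-defined in $[0,\infty]$ by Tonelli), so that the left-hand side is $\|F\|_{L^p_z}$. By the standard duality characterization, this norm equals the supremum of $\int F(z)\,g(z)\,dz$ over nonnegative $g$ with $\|g\|_{L^{p'}_z}\le 1$. For any such $g$, Tonelli allows swapping the order of integration, after which H\"older's inequality in $z$ at each fixed $y$ gives
\[
\int_{\mathbb{R}^3_z}|f(y,z)|\,g(z)\,dz \le \Bigl(\int_{\mathbb{R}^3_z}|f(y,z)|^p\,dz\Bigr)^{1/p}.
\]
Integrating in $y$ and taking the supremum over $g$ yields \eqref{mink00}.

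For \eqref{min es}, I would split into cases. The case $p=q$ is equality by Fubini; the case $q=\infty$ follows from $|f(y,z)|\le \|f(y,\cdot)\|_{L^\infty_z}$ for a.e.\ $z$ followed by integration in $y$. For the remaining range $1\le p<q<\infty$, the key observation is that raising both sides to the $p$-th power rewrites the left-hand side as $\bigl\|\int_{\mathbb{R}^3_y}|f(y,z)|^p\,dy\bigr\|_{L^{q/p}_z}$. Since $q/p\ge 1$, I would apply \eqref{mink00} with exponent $q/p$ in place of $p$ to the nonnegative function $|f(y,z)|^p$, which bounds the above by
\[
\int_{\mathbb{R}^3_y}\Bigl(\int_{\mathbb{R}^3_z}|f(y,z)|^q\,dz\Bigr)^{p/q}\,dy = \|f\|_{L^p_y L^q_z}^p,
\]
and the $p$-th root finishes the proof.

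I do not anticipate any substantive obstacles: the result is standard, and the only care required is in correctly tracking the exponent substitution $p\to q/p$ in the reduction step and in justifying Tonelli throughout by working with nonnegative integrands. If one dislikes duality, an alternative would be a direct proof of \eqref{mink00} by expanding $(\int|f|\,dy)^p = (\int|f|\,dy)(\int|f|\,dy)^{p-1}$ and iterating H\"older, but the duality route is cleaner and handles the endpoint $p=1$ uniformly.
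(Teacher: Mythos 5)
Your derivation of \eqref{min es} from \eqref{mink00} is exactly the paper's: the same endpoint case $q=\infty$ handled by pointwise bounding $|f(y,z)|$ by its $L^\infty_z$ norm, and the same reduction for $q<\infty$ by applying \eqref{mink00} with exponent $q/p\ge 1$ to $|f|^p$. The only difference is that you supply a self-contained duality proof of \eqref{mink00}, whereas the paper simply cites Stein's book; your duality argument is correct (the dual characterization of the $L^p$-norm of a nonnegative measurable function holds on a $\sigma$-finite space, including the $p=1$ endpoint), so this adds a minor but valid piece of self-containment without changing the substance.
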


\begin{proof}
The inequality \eqref{mink00} can be found in \cite[pp. 271, A.1]{S}, hence
it remains to prove \eqref{min es}. For $q=\infty$, we have
\begin{equation}
\norm{f}_{L^\infty_zL^p_y}=\sup_{z\in
\r3}\left(\int_{\r3_y}|f|^p\,dy\right)^{1/p}\le
\left(\int_{\r3_y}\left(\sup_{z\in \r3}|f|\right)^p\,dv\right)^{1/p}=\norm{f}%
_{L^p_yL^\infty_z}.
\end{equation}
For $q<\infty$ and hence $1\le q/p<\infty$, then by \eqref{mink00}, we have
\begin{equation}
\norm{f}_{L^q_zL^p_y}=
\left(\int_{\r3_z}\left(\int_{\r3_y}|f|^p\,dy\right)^{q/p}\,dz\right)^{1/q}%
\le
\left(\int_{\r3_y}\left(\int_{\r3_z}|f|^q\,dz\right)^{p/q}\,dv\right)^{1/p}=%
\norm{f}_{L^p_yL^q_z}.
\end{equation}
\end{proof}


\subsection{Boltzmann collision operators}


Now, we collect some useful estimates of the linear collision operator.

\begin{lemma}
\label{linearcol1} $\langle Lh_1,h_2\rangle=\langle h_1,Lh_2\rangle$, $%
\langle Lh,h\rangle\ge0$. And $L h=0$ if and only if $h=\mathbf{P} h$.
Moreover, there exist a constant $\sigma_0>0$ such that
\begin{equation}  \label{lineares1}
\langle L h,h\rangle \ge \sigma_0\left \vert\{\mathbf{I}-\mathbf{P}\}h\right
\vert_\nu^2,
\end{equation}
and
\begin{equation}  \label{lineares2}
\langle\nu L h, h\rangle \ge \frac{1}{2}|\nu h|_2^2-C|h|_\nu^2.
\end{equation}
\end{lemma}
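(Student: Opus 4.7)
The plan is to treat the five assertions in order, drawing on the classical Grad-type analysis of the linearized hard-sphere operator. First, I would establish the bilinear form representation
\[
\langle L h_1, h_2\rangle = \tfrac{1}{2}\int_{\mathbb{R}^3\times\mathbb{R}^3\times\mathbb{S}^2} |(u-v)\cdot\omega|\,\mu(u)\mu(v)\left[\psi_1(v)+\psi_1(u)-\psi_1(v')-\psi_1(u')\right]\psi_2(v)\,d\omega\,du\,dv,
\]
where $\psi_i = h_i/\sqrt{\mu}$, obtained by expanding the definition of $L$ and using $\mu(v)\mu(u) = \mu(v')\mu(u')$. A further symmetrization with respect to $v \leftrightarrow u$ and $(v,u) \leftrightarrow (v',u')$ produces the manifestly symmetric form
\[
\langle L h_1, h_2\rangle = \tfrac{1}{4}\int |(u-v)\cdot\omega|\mu(u)\mu(v)D[\psi_1]D[\psi_2]\,d\omega\,du\,dv, \qquad D[\psi] := \psi(v)+\psi(u)-\psi(v')-\psi(u'),
\]
from which $\langle Lh_1,h_2\rangle = \langle h_1,Lh_2\rangle$ and $\langle Lh,h\rangle \ge 0$ follow at once.

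Next, to characterize the null space I would use that $\langle Lh,h\rangle = 0$ forces $D[\psi]=0$ a.e., i.e.\ $\psi$ is a collision invariant. The classical characterization of collision invariants shows $\psi \in \mathrm{span}\{1,v_1,v_2,v_3,|v|^2\}$, hence $h \in \mathcal{N}$, proving $Lh = 0 \iff h = \mathbf{P}h$. For the coercivity estimate \eqref{lineares1}, I would invoke the Grad decomposition $Lh = \nu(v)h - Kh$, where $K$ is an integral operator on $L^2_v$ that is compact relative to the weight $\nu$. Self-adjointness together with the null space $\mathcal{N}$ and the weak coercivity coming from the $\nu(v)h$ term imply, via a standard spectral gap argument (contradiction plus compactness), that there is $\sigma_0>0$ with $\langle Lg,g\rangle \ge \sigma_0|g|_\nu^2$ for all $g \perp \mathcal{N}$. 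Applying this to $g = \{\mathbf{I-P}\}h$ and using $Lh = L\{\mathbf{I-P}\}h$ together with $\langle L\{\mathbf{I-P}\}h,\mathbf{P}h\rangle = 0$ (by self-adjointness and $L\mathbf{P}h=0$) yields \eqref{lineares1}.

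Finally, for the weighted estimate \eqref{lineares2} I would expand
\[
\langle \nu Lh,h\rangle = |\nu h|_2^2 - \langle \nu Kh,h\rangle,
\]
and invoke Grad's pointwise bound on the kernel $k(v,v')$ of $K$, which has rapid Gaussian decay at infinity and a mild integrable singularity. The Cauchy--Schwarz/Young estimate
\[
|\langle \nu K h, h\rangle| \le \tfrac{1}{2}|\nu h|_2^2 + C|h|_\nu^2
\]
then absorbs half of the leading term and leaves the claimed lower bound. The principal technical obstacle is the spectral gap in \eqref{lineares1}: the symmetry-based identities in the first paragraph deliver only non-negativity, and upgrading this to a coercive lower bound in $|\cdot|_\nu$ genuinely uses the Grad splitting together with the compactness of $K$ and the explicit behavior $\nu(v) \sim 1+|v|$ of the collision frequency for hard spheres.
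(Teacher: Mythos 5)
The paper does not prove this lemma; it simply cites \cite[Lemmas~3.2 and 3.3]{G2006} for \eqref{lineares1} and \eqref{lineares2}, and the first three assertions are standard folklore facts left unreferenced. Your proposal supplies the standard proof that the cited reference (and Grad's original work) would give: the symmetrization of the bilinear form using $\mu(v)\mu(u)=\mu(v')\mu(u')$ and the $(v,u)\leftrightarrow(v',u')$, $v\leftrightarrow u$ changes of variables, the characterization of collision invariants for the null space, the Grad splitting $L=\nu-K$ with $K$ compact from $L^2_\nu$ to $L^2$ together with a compactness-contradiction argument for the spectral gap, and a Cauchy--Schwarz/Young absorption for the $\nu$-weighted estimate. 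All of these steps are correct for the hard-sphere kernel (where $\nu(v)\sim 1+|v|$ and $K$ has a rapidly decaying, mildly singular kernel). One small point worth making explicit in \eqref{lineares2}: to run the Young step you need $|Kh|_2\lesssim|h|_\nu$, which follows since $K$ is bounded on $L^2_v$ and $\nu$ is bounded below, so $|\langle\nu Kh,h\rangle|\le|Kh|_2\,|\nu h|_2\le\frac{1}{2}|\nu h|_2^2+C|h|_\nu^2$. Your sketch is at the level of a plan rather than a fully detailed proof (e.g.\ the compactness of $K$ relative to $\nu$ and the derivation of the kernel bounds are invoked rather than proved), but as a roadmap it is accurate and matches the classical argument the paper is pointing to.
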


\begin{proof}
We refer to \cite[Lemma 3.2]{G2006} for \eqref{lineares1}, and \cite[Lemma
3.3]{G2006} for \eqref{lineares2}.
\end{proof}

Next, we collect some useful estimates of the nonlinear collision operator.

\begin{lemma}
\label{nonlinearcol1} There exists $C>0$ such that
\begin{equation}  \label{nonlineares2}
|\langle\Gamma(h_1,h_2),h_3\rangle|+|\langle\Gamma(h_2,h_1), h_3\rangle|\le
C\sup_{v}\{\nu^3 h_3\}|h_1|_2|h_2|_2.
\end{equation}
Moreover, for any $0\le \eta \le 1$, we have
\begin{equation}  \label{nonlineares3}
|\nu^{-\eta}\Gamma(h_1,h_2)|_2\le C\left\{|\nu^{1-\eta}
h_1|_2|h_2|_2+|\nu^{1-\eta}h_2|_2|h_1|_2\right\}.
\end{equation}
\end{lemma}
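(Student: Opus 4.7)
The plan is to decompose $\Gamma = \Gamma_+ - \Gamma_-$ into gain and loss pieces and to estimate each separately. Using the energy conservation identity $\sqrt{\mu(v')\mu(u')} = \sqrt{\mu(v)\mu(u)}$ to cancel the $\mu^{-1/2}(v)$ prefactor, one obtains
\begin{equation*}
\Gamma_-(h_1,h_2)(v) = h_1(v) \int\int |(u-v)\cdot\omega|\sqrt{\mu(u)}\, h_2(u)\, d\omega\, du
\end{equation*}
and
\begin{equation*}
\Gamma_+(h_1,h_2)(v) = \int\int |(u-v)\cdot\omega|\sqrt{\mu(u)}\, h_1(v')\, h_2(u')\, d\omega\, du.
\end{equation*}
The loss term is handled by a direct Cauchy-Schwarz estimate, while the gain term requires in addition the pre-post collisional change of variables.

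For the loss part, the elementary bound $|(u-v)\cdot\omega| \le (1+|u|)(1+|v|)$ combined with Cauchy-Schwarz in $u$ and the finiteness of Gaussian moments yields the pointwise estimate $|\Gamma_-(h_1,h_2)(v)| \lesssim \nu(v)|h_1(v)||h_2|_2$. Taking the weighted $L^2_v$ norm with weight $\nu^{-\eta}$ produces the loss contribution $|\nu^{1-\eta} h_1|_2|h_2|_2$ to \eqref{nonlineares3}; pairing against $h_3$, pulling out $\sup_v(\nu^3 h_3)$ and applying Cauchy-Schwarz using the integrability of $\nu^{-4}$ on $\mathbb{R}^3$ gives the loss contribution to \eqref{nonlineares2}. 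The symmetric bounds obtained by swapping $h_1 \leftrightarrow h_2$ account for the $\Gamma(h_2,h_1)$ piece in \eqref{nonlineares2} and the symmetric term in \eqref{nonlineares3}.

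For the gain part, the plan is: (i) apply Cauchy-Schwarz inside the $(u,\omega)$-integral with weight $|(u-v)\cdot\omega|\sqrt{\mu(u)}$ to split $h_1(v')h_2(u')$ into a product of two nonnegative integrals involving $|h_1(v')|^2$ and $|h_2(u')|^2$ separately; (ii) integrate in $v$ and perform the pre-post collisional change of variables $(u,v) \leftrightarrow (u',v')$ at fixed $\omega$, which has unit Jacobian and preserves the kernel $|(u-v)\cdot\omega|$; (iii) use energy conservation to exchange Maxwellian factors in $(u,v)$ for ones in $(u',v')$, absorbing the resulting $\mu^{-1/2}(v)$ by an extra $\sqrt{\mu(v)}$ factor extracted during the Cauchy-Schwarz splitting. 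This converts the two factors into $|h_1|_2^2$ and $|h_2|_2^2$ after the $v$-integration. For \eqref{nonlineares3}, the weight $\nu^{1-\eta}(v)$ is transferred onto either $h_1(v')$ or $h_2(u')$ via the conservation bound $\nu(v) \le \nu(u) + \nu(u') + \nu(v')$ and the restriction $\eta \in [0,1]$, yielding the symmetric right-hand side.

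The main obstacle will be the weight and Maxwellian bookkeeping in the gain estimate. The Cauchy-Schwarz splitting must distribute $\sqrt{\mu(u)}$ so that, after the change of variables, each factor retains enough Gaussian decay to absorb the spurious $\mu^{-1/2}(v)$ and produce unweighted $|h_1|_2, |h_2|_2$ norms, while simultaneously allowing the transfer of the outer weight $\nu^{1-\eta}(v)$ onto precisely one of the post-collisional arguments. Once the pointwise estimate \eqref{nonlineares3} is established in this way, the bilinear form estimate \eqref{nonlineares2} can alternatively be recovered by duality: pair $\Gamma(h_1,h_2)$ with $h_3$, pull out $\sup_v(\nu^3 h_3)$, and conclude by Cauchy-Schwarz in $v$ using the integrability of $\nu^{-4}$.
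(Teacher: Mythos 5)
The paper does not prove this lemma; it simply cites \cite[Lemma 2.3]{G2002} for \eqref{nonlineares2} and \cite[Lemma 2.7]{UY2006} for \eqref{nonlineares3}, so you are trying to reconstruct imported facts. Your gain/loss decomposition (cancelling the $\mu^{-1/2}(v)$ prefactor via $\sqrt{\mu(u')\mu(v')}=\sqrt{\mu(u)\mu(v)}$), your loss-term estimate, and your reduction of \eqref{nonlineares2} to \eqref{nonlineares3} (apply the $\eta=1$ case and Cauchy--Schwarz with $\nu^{-2}\in L^2(\mathbb{R}^3_v)$) are all sound.

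The gain estimate, however, has a genuine gap. After your step (i) you have, at each fixed $v$, $|\Gamma_+(v)|\le A(v)^{1/2}B(v)^{1/2}$ with $A(v)=\int\!\!\int|(u-v)\cdot\omega|\sqrt{\mu(u)}\,|h_1(v')|^2\,d\omega\,du$ and $B$ the analogue with $|h_2(u')|^2$. The quantity $\int\nu^{-2\eta}(v)A(v)B(v)\,dv$ that step (ii) asks you to treat is a five-fold integral over $v$ and two \emph{independent} copies of $(u,\omega)$; the joint pre-post collisional change of variables $(u,v)\leftrightarrow(u',v')$ cannot be performed on it, and the map $u\mapsto u'$ at fixed $(v,\omega)$ has vanishing Jacobian so it cannot be used inside $A$ or $B$ separately. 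Hence step (iii) does not ``convert the two factors into $|h_1|_2^2$ and $|h_2|_2^2$.'' Even the corrected version of this plan, in which you peel off $\int\!\!\int|(u-v)\cdot\omega|\sqrt{\mu(u)}\,d\omega\,du\sim\nu(v)$ as one Cauchy--Schwarz factor and keep $|h_1(v')|^2|h_2(u')|^2$ together in a single $(u,v,\omega)$-integral to which the change of variables does apply, does not close: after $\nu^{1-2\eta}(v')\lesssim\nu^{1-2\eta}(u)\nu^{1-2\eta}(v)$ (for $\eta\le\tfrac12$), $\sqrt{\mu(u')}\le1$, and $|(u-v)\cdot\omega|\lesssim\nu(u)\nu(v)$, one lands on the \emph{symmetric} bound $|\nu^{1-\eta}h_1|_2\,|\nu^{1-\eta}h_2|_2$, which is strictly weaker than the asymmetric right-hand side of \eqref{nonlineares3} (take $h_1=h_2$). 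Shifting the weight $\nu^{1-\eta}$ onto only one argument requires using the Gaussian decay of $\sqrt{\mu(u')}$ far more delicately than $\sqrt{\mu(u')}\le1$ --- this is the content of the Grad/Carleman-type gain estimate underlying the cited lemmas --- and it is precisely the ``main obstacle'' you flag without resolving. Finally, the remark about ``absorbing the resulting $\mu^{-1/2}(v)$'' is inconsistent with your own decomposition, which has already eliminated that prefactor via energy conservation.
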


\begin{proof}
We refer to \cite[Lemma 2.3]{G2002} for \eqref{nonlineares2}, and \cite[%
Lemma 2.7]{UY2006} for \eqref{nonlineares3}.
\end{proof}

\

\end{document}